\numberwithin{equation}{section}
\newtheorem{theorem}{Theorem}[section]
\theoremstyle{plain}
\newtheorem{prop}[theorem]{Proposition }
\newtheorem{cor}[theorem]{Corollary }
\newtheorem{lem}[theorem]{Lemma }
\newtheorem{rem}[theorem] {Remark}
\begin{document}

\title{Long time behavior of the NLS-Szeg\H{o} equation }
\author{Ruoci Sun\footnote{Laboratoire de Math\'ematiques d’Orsay, Univ. Paris-Sud \uppercase\expandafter{\romannumeral11}, CNRS, Universit\'e Paris-Saclay, F-91405 Orsay, France (ruoci.sun@math.u-psud.fr).}}

\maketitle

\noindent $\mathbf{Abstract}$ \quad  We are interested in the influence of filtering the positive Fourier modes to the integrable non linear Schrödinger equation. Equivalently, we want to study the effect of dispersion added to the cubic Szeg\H{o} equation, leading to the NLS-Szeg\H{o} equation on the circle $\mathbb{S}^1$
\begin{equation*}
i \partial_t u + \epsilon^{\alpha}\partial_x^2 u = \Pi(|u|^2 u),\qquad 0<\epsilon<1, \qquad \alpha\geq 0.
\end{equation*}There are two sets of results in this paper. The first result concerns the long time Sobolev estimates for small data. The second set of results concerns the orbital stability of plane wave solutions. Some instability results are also obtained, leading to the wave turbulence phenomenon. \\

\noindent $\mathbf{Keywords}$ \quad Cubic Schr\"odinger equation, Szeg\H{o} projector, small dispersion, stability, wave turbulence, Birkhoff normal form
\tableofcontents

\section{Introduction}
We consider the NLS-Szeg\H{o} equation defined on the circle $\mathbb{S}^1$
\begin{equation}\label{NLS-Szego}
i \partial_t u + \partial_x^2 u = \Pi(|u|^2 u), \qquad
u(0,\cdot)= u_0.
\end{equation}Here $\Pi: L^2(\mathbb{S}^1) \to L^2(\mathbb{S}^1)$ denotes the orthogonal projector from $L^2(\mathbb{S}^1)$ onto the space of $L^2$ boundary values of holomorphic functions on the unit disc,
\begin{equation*}
\Pi : \sum_{k\in \mathbb{Z}}u_k e^{ikx} \longmapsto \sum_{k\geq 0}u_k e^{ikx}.
\end{equation*}We denote by $L^2_+:=\Pi( L^2(\mathbb{S}^1)) \subset L^2(\mathbb{S}^1)$, $H^s_+:=H^s(\mathbb{S}^1)\bigcap L^2_+$, for all $s \geq 0$, and $C^{\infty}_+:=C^{\infty}(\mathbb{S}^1)\bigcap L^2_+$.
\subsection{Motivation}
\noindent The NLS-Szeg\H{o} equation can be seen as the combination of two completely integrable systems: the defocusing cubic Schrödinger equation 
\begin{equation} \label{cubic dNLS equation}
i\partial_t u + \partial_x^2 u= |u|^2 u, \qquad (t,x)\in \mathbb{R}\times  \mathbb{S}^1,\\
\end{equation}and the cubic Szeg\H{o} equation
\begin{equation}\label{cubic szego equation}
i\partial_t V = \Pi(|V|^2V),\qquad (t,x)\in \mathbb{R}\times  \mathbb{S}^1.
\end{equation}They have both a Lax Pair structure and the action-angle coordinates, which can be used to obtain their explicit formulas with the inversed spectral method(see Zakharov--Shabat $[\mathbf{\ref{ZAKHAROV SHABAT}}]$, Faddeev--Takhtajan $[\mathbf{\ref{Faddeev-Takhtajan}}]$,  Gr\'ebert--Kappeler $[\mathbf{\ref{Kappeler Grebert}}]$, G\'erard $[\mathbf{\ref{Gerard defocusing NLS integrals}}]$, for the NLS equation and G\'erard--Grellier $[\mathbf{\ref{gerardgrellier1}}, \mathbf{\ref{gerardgrellier2}},\mathbf{\ref{Gerard-grellier explicit formula szego equation}},\mathbf{\ref{Gerard grellier book cubic szego equation and hankel operators}}]$ for the cubic Szeg\H{o} equation). However, these two Lax pairs cannot be combined in order to give a Lax pair for $(\ref{NLS-Szego})$. Moreover, the long time behaviors of these two equations are totally different. \\

\noindent The NLS equation $(\ref{cubic dNLS equation})$ has a sequence of conservation laws controlling every Sobolev norms(see Faddeev--Takhtajan $[\mathbf{\ref{Faddeev-Takhtajan}}]$, Gr\'ebert--Kappeler $[\mathbf{\ref{Kappeler Grebert}}]$, G\'erard $[\mathbf{\ref{Gerard defocusing NLS integrals}}]$), so all the solutions are uniformly bounded in every $H^s$ space. Moreover, Gr\'ebert and Kappeler $[\mathbf{\ref{Kappeler Grebert}}]$ have proved the existence of the global Birkhoff coordinates for NLS equation. So the solutions of $(\ref{cubic dNLS equation})$ are actually almost periodic on $\mathbb{R}$ valued into $H^s(\mathbb{S}^1)$.\\

\noindent Compared to $(\ref{cubic dNLS equation})$, the cubic Szeg\H{o} equation, which stands for a non-dispersive model, has both the Lax pair structure and the wave turbulence phenomenon. Its long time behavior is extremely sensible according to the different initial data. P.Gérard and S.Grellier have shown that(in $[\mathbf{\ref{gerardgrellier growth of sobolev norm}}, \mathbf{\ref{Gerard-grellier explicit formula szego equation}}, \mathbf{\ref{Gerard grellier book cubic szego equation and hankel operators}}]$) for a $G_{\delta}$ dense subset of initial data in $C^{\infty}_+$, the solutions may blow up in $H^s$, for every $s> \frac{1}{2}$ with super–polynomial growth on some sequence of times, while they go back to their initial data on another sequence of times tending to infinity. However, all the $H^{\frac{1}{2}}$-solutions are almost periodic. (see also $\mathbf{Theorem}$ $\mathbf{\ref{Chaotic long time behaviour for cubic szego equation}}$).

\begin{rem}
Consider the following equation without the Szeg\H{o} projector $\Pi$ on $\mathbb{S}^1$:
\begin{equation}\label{ODE without szego projector}
\begin{cases}
i\partial_t V = |V|^2 V,\\
V(0,\cdot)=V_0.
\end{cases}
\end{equation}Then $V(t,x)=e^{it |V_0|^2}V_0(x)$ and we have $\|V(t)\|_{H^s} \simeq |t|^s$, for all $s\geq 0$, if $|V_0|$ is not a constant function. Hence, the Szeg\H{o} projector both accelerates the energy transfer to high frequencies, and facilitates the transition to low frequencies for $(\ref{ODE without szego projector})$.
\end{rem}

\noindent One wonders about whether filtering the positive Fourier modes can change the long time Sobolev estimates of the cubic defocusing Schrödinger equation. So we introduce equation $(\ref{NLS-Szego})$. On the other hand, it can also be obtained from the cubic Szeg\H{o} equation by adding the dispersive term $\partial_x^2$ to its linear part. In order to see the gradual change of the dispersion, we add the parameter $\epsilon^{\alpha}$ in front of the Laplacian $\partial_x^2$ to get a more general model, the NLS-Szeg\H{o} equation (with small dispersion):
\begin{equation}\label{NLS-Szego epsilon^alpha}
i \partial_t u + \epsilon^{\alpha}\partial_x^2 u = \Pi(|u |^2 u ), \qquad u (0,\cdot)= u_0,  \qquad 0<\epsilon<1,  \qquad \alpha\geq 0.
\end{equation}Equation $(\ref{NLS-Szego})$ is the special case $\alpha=0$ for $(\ref{NLS-Szego epsilon^alpha})$. \\

\noindent We endow $L^2_+$ with the canonical symplectic form $\omega(u,v)=\mathrm{Im}\int_{\mathbb{S}^1} \frac{u\bar{v}}{2\pi}  
$. Equation $(\ref{NLS-Szego epsilon^alpha})$ has the Hamiltonian formalism with the energy functional
\begin{equation}\label{energy functional of NLS Szego epsilon alpha}
E^{\alpha,\epsilon}(u)=\frac{\epsilon^{\alpha}}{2} \|\partial_x u\|_{L^2}^2 + \frac{1}{4}\|u\|_{L^4}^4, \qquad  u \in H^1_+.
\end{equation}Besides $E^{\alpha,\epsilon}$, equation $(\ref{NLS-Szego epsilon^alpha})$ has two other conservation laws, 
\begin{equation*}
\begin{cases}
Q(u)=\|u\|_{L^2}^2,\\
I(u)=\mathrm{Im}\int_{\mathbb{S}^1}\overline{u} \partial_x u = \|u\|_{\dot{H}^{\frac{1}{2}}}^2,
\end{cases}
\end{equation*}which give the estimate of the solution for low frequencies:
\begin{equation*}
\sup_{t\in \mathbb{R}}\|u(t)\|_{H^s} \leq \|u_0\|_{L^2}^{1-2s}\|u_0\|_{H^{\frac{1}{2}}}^{2s}, \qquad \forall s \in [0, \frac{1}{2}].
\end{equation*}

\noindent Proceeding as in the case of equation $(\ref{cubic dNLS equation})$, one can prove the global existence and uniqueness of the solution of the NLS-Szeg\H{o} equation in high frequency Sobolev spaces, by using the Brezis--Gallou\"et type estimate $[\mathbf{\ref{Brezis Gallouet inequality article}}]$, the Aubin--Lions--Simon theorem (see $\mathbf{Theorem}$ \uppercase\expandafter{\romannumeral2}.5.16 in Boyer--Fabrie $[\mathbf{\ref{Boyer}}]$) and the Trudinger type inequality (see Yudovich $[\mathbf{\ref{Yudovich trudinger}}]$, Vladimirov $[\mathbf{\ref{Vladimirov trudinger}}]$, Ogawa $[\mathbf{\ref{Ogawa trudinger}}]$ and G\'erard--Grellier $[\mathbf{\ref{gerardgrellier1}}]$). Its well-posedness problem in low frequency Sobolev spaces can be dealt with Strichartz's inequality introduced in Bourgain $[\mathbf{\ref{bourgain strichartz inequality}}]$. Only the high frequency Sobolev estimates are considered in this paper.
\begin{prop}\label{GWP for s bigger than 1/2}
For every $s \geq \frac{1}{2}$, given $u_0\in H^s_+$, there exists a unique solution $u\in C(\mathbb{R},H^s_+)$ of $(\ref{NLS-Szego epsilon^alpha})$ such that $u(0)=u_0$. For every $T>0$, the mapping $u_0 \in H^s_+ \mapsto u \in C([-T,T], H^s_+)$ is continuous.
\end{prop}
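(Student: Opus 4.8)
The plan is to establish local well-posedness with a blow-up alternative, and then close it globally using the conservation laws together with, for $s>\tfrac12$, a logarithmic interpolation inequality.

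\textbf{Local theory.} I would start from the Duhamel formula
$$u(t)=e^{it\epsilon^{\alpha}\partial_x^2}u_0-i\int_0^t e^{i(t-\tau)\epsilon^{\alpha}\partial_x^2}\,\Pi\big(|u|^2u\big)(\tau)\,d\tau.$$
Since $\partial_x^2$ is a Fourier multiplier it commutes with $\Pi$, so $e^{it\epsilon^{\alpha}\partial_x^2}$ restricts to a unitary group on every $H^s_+$, and $\Pi$ is bounded on $H^s$, so $u\mapsto\Pi(|u|^2u)$ maps $H^s_+$ into itself. For $s>\tfrac12$, $H^s(\mathbb{S}^1)$ is a Banach algebra, hence $u\mapsto\Pi(|u|^2u)$ is Lipschitz on bounded subsets of $H^s_+$, and a contraction argument in a ball of $C([-T,T],H^s_+)$ produces, for some $T=T(\|u_0\|_{H^s})>0$, a unique local solution together with the usual blow-up alternative: a maximal solution is global unless $\|u(t)\|_{H^s}\to\infty$ at a finite maximal time. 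For the endpoint $s=\tfrac12$, where the algebra property fails, I would instead set up the fixed point in a Bourgain space $X^{\frac12,b}$ (or in $C_tH^{\frac12}\cap L^4_{t,x}$), relying on the periodic Strichartz estimate $\|e^{it\partial_x^2}f\|_{L^4([0,1]\times\mathbb{S}^1)}\lesssim\|f\|_{L^2}$ of Bourgain $[\mathbf{\ref{bourgain strichartz inequality}}]$, rescaled so as to absorb the coefficient $\epsilon^{\alpha}$; the local time again depends only on $\|u_0\|_{H^{1/2}}$ and the same blow-up alternative holds.

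\textbf{Conservation laws and globalization.} Differentiating $Q(u)=\|u\|_{L^2}^2$, $I(u)=\mathrm{Im}\int_{\mathbb{S}^1}\bar u\,\partial_x u=\|u\|_{\dot{H}^{1/2}}^2$ and $E^{\alpha,\epsilon}(u)$ along the flow, and using $\Pi^*=\Pi$, $\Pi^2=\Pi$ together with $u(t)\in L^2_+$ — so that pairing the cubic term against $u$, $\partial_x u$ or $\partial_x^2 u$ yields quantities whose relevant real or imaginary parts cancel — one checks that all three functionals are constant in time. In particular $\|u(t)\|_{H^{1/2}}^2\le Q(u_0)+I(u_0)=:M_0$, which with the blow-up alternative already gives a global solution in $H^{1/2}_+$. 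For $s>\tfrac12$ I would use the Brezis--Gallou\"et/Trudinger type inequality $[\mathbf{\ref{Brezis Gallouet inequality article}}]$ on $\mathbb{S}^1$, $\|u\|_{L^\infty}\lesssim_s\|u\|_{H^{1/2}}\big(1+\log(1+\|u\|_{H^s}/\|u\|_{H^{1/2}})\big)^{1/2}$, to tame the only dangerous factor in the $H^s$ energy estimate: applying $\langle D\rangle^s$ to the equation and pairing with $\langle D\rangle^s u$, the dispersive term contributes nothing to $\tfrac{d}{dt}\|u(t)\|_{H^s}^2$ and the fractional Leibniz rule gives $\|\langle D\rangle^s\Pi(|u|^2u)\|_{L^2}\lesssim\|u\|_{L^\infty}^2\|u\|_{H^s}$, whence
$$\frac{d}{dt}\|u(t)\|_{H^s}^2\lesssim\|u(t)\|_{L^\infty}^2\,\|u(t)\|_{H^s}^2\lesssim M_0\big(1+\log(1+\|u(t)\|_{H^s})\big)\|u(t)\|_{H^s}^2.$$
Gr\"onwall's lemma then bounds $\|u(t)\|_{H^s}$ by a double exponential in $|t|$, which excludes finite-time blow-up, so the solution extends globally in $H^s_+$. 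To legitimize differentiating $\|u(t)\|_{H^s}^2$ I would first run this on smooth data — constructing the solution by a Galerkin/Fourier-truncation scheme and passing to the limit with the Aubin--Lions--Simon compactness theorem $[\mathbf{\ref{Boyer}}]$ — and then recover general $u_0\in H^s_+$ by density.

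\textbf{Continuity of the flow and the main difficulty.} On each short interval the contraction produces a map $u_0\mapsto u$ that is Lipschitz on bounded subsets of $H^s_+$, and since the a priori estimates keep $\|u(t)\|_{H^s}$ bounded on any $[-T,T]$ by a constant depending only on $T$, $\epsilon$, $\alpha$ and $\|u_0\|_{H^s}$, one propagates this continuity across $[-T,T]$ in finitely many steps; for $s=\tfrac12$ the analogous statement is built into the $X^{s,b}$ local theory. I expect the main obstacle to be the endpoint $s=\tfrac12$ — where both the local theory and the continuity statement genuinely require Bourgain's periodic $L^4$ Strichartz estimate and a careful handling of the $\epsilon^{\alpha}$-rescaling — together with the bookkeeping needed to turn the formal high-regularity energy estimate into a rigorous one via the Brezis--Gallou\"et/Trudinger endpoint inequality.
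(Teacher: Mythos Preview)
Your proposal is correct and follows precisely the outline the paper itself gives (the paper does not write out a proof of this proposition but only lists the ingredients: Brezis--Gallou\"et/Trudinger type logarithmic inequality, Aubin--Lions--Simon compactness for the approximation scheme, and Bourgain's periodic Strichartz estimate for the endpoint $s=\tfrac12$). Your write-up fleshes out exactly these steps in the natural order, so there is nothing to add or correct.
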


\subsection{Main results}
The first result concerns the long time stability around the null solution of the NLS-Szeg\H{o} equation $(\ref{NLS-Szego epsilon^alpha})$. If the initial data $u_0$ is bounded by $\epsilon$, we look for a time interval $I^{\alpha}_{\epsilon}$, in which the solution $u(t)$ is still bounded by $\mathcal{O}(\epsilon)$. Now we state the first result of this paper.
\begin{theorem}\label{orbital stability epsilon^(4-alpha) estimates}
For every $s >\frac{1}{2}$, there exist two constants $a_s \in (0,1)$ and $K_s >0$ such that for all $0<\epsilon \ll 1$ and $u_0 \in H^s_+$, if $\|u_0\|_{H^s} =\epsilon$ and $u$ denotes the solution of $(\ref{NLS-Szego epsilon^alpha})$, then
\begin{equation}\label{epsilon^(4-alpha) estimates formula}
\begin{cases}
\sup_{|t| \leq \frac{a_s}{\epsilon^{4-\alpha}}} \|u(t)\|_{H^s} \leq  K_s \epsilon ,\qquad \mathrm{if} \quad \alpha\in [0,2];\\
\sup_{|t| \leq \frac{a_s}{\epsilon^{2}}} \|u(t)\|_{H^s} \leq K_s\epsilon, \qquad \qquad\mathrm{if} \quad \alpha >2.
\end{cases}
\end{equation}Moreover, the time interval $I^{\alpha}_{\epsilon}=[-\frac{a_s}{\epsilon^{2}},\frac{a_s}{\epsilon^{2}}]$ is maximal for the case $\alpha >2$ and $s \geq 1$ in the following sense: for every $0<\epsilon \ll 1$, there exists $u_0^{\epsilon}\in C^{\infty}_+$ such that $\|u_0^{\epsilon}\|_{H^s}\simeq \epsilon$ and for every $\beta>0$, we have 
\begin{equation*}
\sup_{|t|\leq \frac{1}{\epsilon^{2+\beta}}} \|u(t)\|_{H^s}\gtrsim \epsilon |\ln \epsilon|^{\frac{1}{2}} \gg \epsilon,\qquad \qquad  u(0)=u_0^{\epsilon}.
\end{equation*}
\end{theorem}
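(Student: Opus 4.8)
The plan is to split the argument into the two assertions: the positive result (the Sobolev bound on the indicated time interval) and the optimality statement for $\alpha>2$, $s\geq1$. For the positive result the strategy is a Birkhoff normal form / bootstrap argument adapted to the small parameter $\epsilon$. First I would rescale: writing $u=\epsilon v$, equation \eqref{NLS-Szego epsilon^alpha} becomes $i\partial_t v+\epsilon^\alpha\partial_x^2 v=\epsilon^2\Pi(|v|^2v)$, so the cubic term carries a factor $\epsilon^2$ and $\|v_0\|_{H^s}=1$. One expects the solution to stay of size $\mathcal O(1)$ in $H^s$ until a time of order $\epsilon^{-2}$ by the naive energy estimate on $\frac{d}{dt}\|v\|_{H^s}^2$, which is the source of the $\epsilon^{-2}$ bound valid for all $\alpha$. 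To reach $\epsilon^{-(4-\alpha)}$ when $\alpha\in[0,2]$ one has to gain one more power of $\epsilon$; this is where dispersion enters. I would use the interaction of the linear flow $e^{it\epsilon^\alpha\partial_x^2}$ with the cubic nonlinearity: the resonant set of the cubic term of NLS on $\mathbb S^1$ consists of the frequency relations $k_1-k_2+k_3-k_4=0$, $k_1^2-k_2^2+k_3^2-k_4^2=0$, whose solutions are the ``trivial'' pairings $\{k_1,k_3\}=\{k_2,k_4\}$; on those the cubic form reduces to a gauge term that is controlled by the conserved quantities $Q$ and $I$, and hence does not move the $H^s$ norm. The non-resonant part can be integrated by parts in time (Poincaré–Dulac / normal form transformation), producing a boundary term of size $\epsilon^2\cdot\epsilon^{-\alpha}\,\|v\|_{H^s}^4$ (the $\epsilon^{-\alpha}$ coming from dividing by the small frequency gap $\epsilon^\alpha(k_1^2-k_2^2+k_3^2-k_4^2)$) plus a quintic remainder of size $\epsilon^4\epsilon^{-\alpha}\|v\|_{H^s}^6$. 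A standard bootstrap then closes on the interval $|t|\lesssim\epsilon^{-(4-\alpha)}$ as long as $\epsilon^{4-\alpha}\leq\epsilon^2$, i.e.\ $\alpha\leq 2$, and otherwise one only keeps the $\epsilon^{-2}$ bound. I would state the normal form lemma separately, checking the relevant multilinear estimates on $H^s_+$ for $s>\tfrac12$ (where $H^s_+$ is an algebra, making the multilinear bounds routine), and being careful that the small divisor $|k_1^2-k_2^2+k_3^2-k_4^2|\geq 1$ on the non-resonant set so that the $\epsilon^{-\alpha}$ loss is the only one.

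For the optimality part I would exhibit an explicit family of initial data for which the $H^s$ norm genuinely grows by a factor $|\ln\epsilon|^{1/2}$ before time $\epsilon^{-2-\beta}$. The natural candidate, in the regime $\alpha>2$ where dispersion is very weak, is to borrow a known wave-turbulence mechanism from the cubic Szeg\H{o} equation: take $u_0^\epsilon$ proportional to $\epsilon$ times a profile that, under the pure Szeg\H{o} flow, exhibits the Gérard–Grellier transfer of energy to high modes (referenced in the excerpt via Theorem on chaotic long time behaviour). Concretely I would pick $u_0^\epsilon=\epsilon\,\chi$ with $\chi$ a fixed finite combination of exponentials — e.g.\ a two-soliton-type or a $\chi=1+\text{small perturbation in the }e^{ix}\text{ mode}$ profile — for which the Szeg\H{o} dynamics is explicitly solvable and produces logarithmic growth of $\|\cdot\|_{H^s}$ on a time scale matching (after the rescaling $u=\epsilon v$, which multiplies the Szeg\H{o} time by $\epsilon^{-2}$) the window $\epsilon^{-2}\lesssim t\lesssim\epsilon^{-2-\beta}$. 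Then I would estimate the difference between the NLS-Szeg\H{o} solution and the rescaled Szeg\H{o} solution: on the time scale $\epsilon^{-2}$ the perturbation $\epsilon^\alpha\partial_x^2$ contributes $\epsilon^\alpha\cdot\epsilon^{-2}=\epsilon^{\alpha-2}\to0$ for $\alpha>2$ in the relevant norm, so the NLS-Szeg\H{o} solution inherits the growth of the Szeg\H{o} profile up to lower order, giving $\sup_{|t|\le\epsilon^{-2-\beta}}\|u(t)\|_{H^s}\gtrsim\epsilon|\ln\epsilon|^{1/2}$.

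The main obstacle I anticipate is the optimality half rather than the normal form: controlling the error between the NLS-Szeg\H{o} flow and the (rescaled) cubic Szeg\H{o} flow on a time interval as long as $\epsilon^{-2-\beta}$ — i.e.\ slightly longer than the natural $\epsilon^{-2}$ — while the Szeg\H{o} solution itself is leaving every ball of fixed radius in $H^s$. A direct Gronwall comparison loses exponentially and is too crude; one needs either a more robust stability statement for the Szeg\H{o} dynamics along this particular orbit (using its explicit formula and the Lax pair, so that the relevant norms are controlled), or a perturbative argument localized near the times when the transfer happens. A secondary technical point is to make the choice of $\chi$ explicit enough that ``$\|u_0^\epsilon\|_{H^s}\simeq\epsilon$'' and the lower bound $\epsilon|\ln\epsilon|^{1/2}$ are both verifiable by hand, which likely forces a specific low-dimensional family (rational functions of $e^{ix}$ of small degree) where everything can be computed. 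The normal form half, by contrast, I expect to be essentially routine once the resonance analysis and the algebra property of $H^s_+$ are in place.
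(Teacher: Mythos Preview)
Your plan for the positive estimate is essentially the paper's own argument: rescale $u=\epsilon v$, identify the cubic resonances as the trivial pairings, observe that the resonant contribution $2\|v\|_{L^2}^2 v_k-|v_k|^2 v_k$ leaves each $|v_k|$ (hence $\|v\|_{H^s}$) fixed, and kill the non-resonant part by one normal-form step at the cost of a small divisor $\epsilon^{\alpha}$, producing a quintic remainder of size $\epsilon^{4-\alpha}$ and closing by bootstrap. The paper implements this Hamiltonianly (constructing $F$ with $\{F,H_0\}+R=\tilde R$ and $\Psi_\epsilon=\exp(\epsilon^2 X_F)$ after reducing to $\alpha=0$ by scaling), but your Poincar\'e--Dulac phrasing is equivalent.

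The optimality half has a genuine gap. You propose a \emph{fixed} profile $\chi$ whose Szeg\H{o} evolution ``produces logarithmic growth,'' and you worry that a Gronwall comparison with the Szeg\H{o} flow over $|t|\lesssim\epsilon^{-2-\beta}$ is too crude. Both points are off. In the paper the profile is not fixed: one takes $u_0^\epsilon=\epsilon(e^{ix}+\delta)$ with a parameter $\delta=\delta(\epsilon)$ to be chosen, rescales $u(t)=\epsilon\,U(\epsilon^2 t)$ so that $U$ solves $i\partial_t U+\nu^2\partial_x^2 U=\Pi(|U|^2U)$ with $\nu=\epsilon^{(\alpha-2)/2}$, and compares $U$ to the Szeg\H{o} solution $V$ with the same data $e^{ix}+\delta$. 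The explicit formula for $V$ gives $\|V(t^\delta)\|_{H^1}\simeq\delta^{-1}$ at $t^\delta\simeq\delta^{-1}$; there is no logarithmic growth of a fixed orbit --- the $|\ln\epsilon|^{1/2}$ arises only after coupling $\delta$ to $\epsilon$.

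The crucial point you are missing is that a \emph{direct} Gronwall on $\|U-V\|_{H^1}$ does work. The linear growth rate in the energy inequality is of order $\|\partial_x V\|_{L^2}\lesssim\delta^{-1}$ and the time is $t^\delta\simeq\delta^{-1}$, so the exponential loss is $e^{C/\delta^2}$; the forcing is $\nu^2\|V\|_{H^3}^2\lesssim\nu^2\delta^{-10}$. Choosing $\nu=e^{-\pi K/(2\delta^2)}$, i.e.\ $\delta\simeq((\alpha-2)|\ln\epsilon|)^{-1/2}$, makes $\|U(t^\delta)-V(t^\delta)\|_{H^1}\ll 1\ll\delta^{-1}$, hence $\|u(t^\delta/\epsilon^2)\|_{H^1}\simeq\epsilon/\delta\simeq\epsilon|\ln\epsilon|^{1/2}$. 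Note also that you do not need to track the solution up to $\epsilon^{-2-\beta}$: the growth already occurs at $t^\delta/\epsilon^2\simeq\epsilon^{-2}|\ln\epsilon|^{1/2}\ll\epsilon^{-2-\beta}$, so the sup over $|t|\le\epsilon^{-2-\beta}$ is bounded below for free. Without this $\delta$--$\epsilon$ coupling your argument cannot produce the $|\ln\epsilon|^{1/2}$ factor, and your suggested refinements (Lax pair stability, localized perturbation) are unnecessary.
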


\begin{rem}
In the case $\alpha \in [0,2)$, the proof is based on the Birkhoff normal form method, similarly to Bambusi $[\mathbf{\ref{Bambusi birkhoff normal form for nonlinear pde}}]$, Gr\'ebert $[\mathbf{\ref{GB birkoff normal form}}]$, G\'erard--Grellier $[\mathbf{\ref{gerardgrellier effective dynamic}}]$ and Faou--Gauckler--Lubich $[\mathbf{\ref{faou--Gauckler--lubich Sobolev Stability of Plane Wave}}]$ for instance. However, the time interval $[-\frac{a_s}{\epsilon^{4-\alpha}}, \frac{a_s}{\epsilon^{4-\alpha}}]$ may not be optimal. The resonant term of $6$ indices in the homological equation can not be cancelled by the Birkhoff normal form transform.(see $\mathbf{subsubsection}$ $\mathbf{\ref{optimal interval for alpha =0}}$)
\end{rem}

\noindent The second set of results concerns the long time $H^s$-estimates for the solutions of $(\ref{NLS-Szego epsilon^alpha})$, if its initial data is a perturbation of the plane wave $\mathbf{e}_m : x  \mapsto e^{imx}$, for some $m \in  \mathbb{N}$ and $s \geq 1$. Let $u=u(t,x)$ be the solution of equation $(\ref{NLS-Szego epsilon^alpha})$ such that $\|u(0)-\mathbf{e}_m\|_{H^s} = \epsilon$. Its energy functional $(\ref{energy functional of NLS Szego epsilon alpha})$ gives the following estimate:
\begin{equation}\label{estimate of norm H1 de u by energy functional}
\sup_{t\in \mathbb{R}}\|u(t)\|_{H^1}\lesssim_{\|u_0\|_{H^1}}\epsilon^{-\frac{\alpha}{2}}, \qquad \forall 0<\epsilon<1, \quad \alpha\geq 0.
\end{equation}However, no information on the stability of the plane waves $\mathbf{e}_m$ is obtained from $(\ref{estimate of norm H1 de u by energy functional})$ during the process $\epsilon \to 0^+$. Consider the super-polynomial growth of Sobolev norms in the cubic Szeg\H{o} equation case (see G\'erard--Grellier $[\mathbf{\ref{gerardgrellier growth of sobolev norm}},\mathbf{\ref{Gerard grellier book cubic szego equation and hankel operators}}]$ and $\mathbf{Proposition}$ $\mathbf{\ref{daisy effect}}$ in this paper), the occurence of wave turbulence phenomenon for $(\ref{NLS-Szego epsilon^alpha})$ depends on the level of its dispersion. We begin with three long time stability results for the polynomial dispersion $\epsilon^{\alpha}\partial_x^2$ case with $0\leq \alpha\leq 2$. The following theorem indicates $H^1$-orbital stability of the traveling waves $\mathbf{e}_m$ for equation $(\ref{NLS-Szego epsilon^alpha})$.

\begin{theorem}\label{H1 estimates of the difference TRAVELLING WAVE delta}
For all $\epsilon \in (0,1)$, $\alpha \in [0,2]$ and $m \in \mathbb{N}$, there exists $\mathcal{C}_m >0$ such that if $\|u(0)-\mathbf{e}_m\|_{H^1} = \epsilon$, then we have
\begin{equation*}
\sup_{t\in \mathbb{R}} \inf_{\theta\in \mathbb{R}}\|u(t)-e^{i\theta}\mathbf{e}_m\|_{H^1} \leq \mathcal{C}_m \epsilon^{1-\frac{\alpha}{2}}.
\end{equation*}
\end{theorem}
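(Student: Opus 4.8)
I would prove orbital stability by displaying $\mathbf e_m$ (up to the phase action $u\mapsto e^{i\theta}u$) as exactly the set of minimizers of a conserved functional that splits into manifestly nonnegative pieces; the estimate then follows directly from conservation, with no second‑variation analysis, modulation equation, or bootstrap. First, $\mathbf e_m$ is a relative equilibrium: since $|\mathbf e_m|\equiv 1$ and $\mathbf e_m$ carries only the nonnegative Fourier mode $m$, one has $\Pi(|\mathbf e_m|^2\mathbf e_m)=\mathbf e_m$, so $u(t)=e^{-i(1+\epsilon^\alpha m^2)t}\mathbf e_m$ solves $(\ref{NLS-Szego epsilon^alpha})$, and $\mathbf e_m$ is a critical point of $E^{\alpha,\epsilon}$ subject to $Q,I$ fixed. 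The right Lyapunov functional is the corresponding Lagrange combination
\[
\mathcal L(u):=E^{\alpha,\epsilon}(u)-\epsilon^\alpha m\,I(u)+\tfrac{\epsilon^\alpha m^2-1}{2}\,Q(u),
\]
which is conserved and invariant under $u\mapsto e^{i\theta}u$.

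The key — and the only genuinely substantive — point is the exact identity obtained by regrouping $\tfrac{\epsilon^\alpha}{2}\|\partial_xu\|_{L^2}^2-\epsilon^\alpha m\,I(u)+\tfrac{\epsilon^\alpha m^2}{2}Q(u)=\tfrac{\epsilon^\alpha}{2}\|\partial_x(e^{-imx}u)\|_{L^2}^2$ and by completing the square in $\tfrac14\|u\|_{L^4}^4-\tfrac12\|u\|_{L^2}^2$ (here one uses $|\mathbf e_m|\equiv1$):
\[
\mathcal L(u)-\mathcal L(\mathbf e_m)=\tfrac{\epsilon^\alpha}{2}\!\!\sum_{k\neq m}(k-m)^2|u_k|^2\;+\;\tfrac14\big\||u|^2-1\big\|_{L^2}^2\;=:\;R(u)+\tfrac14\big\||u|^2-1\big\|_{L^2}^2\;\geq0 ,
\]
with equality precisely when $u_k=0$ for all $k\neq m$ and $|u_m|=1$, i.e. precisely on the orbit $\{e^{i\theta}\mathbf e_m\}$. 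Thus $R$ vanishes exactly on $\mathbb C\mathbf e_m$ and the defocusing "potential" term vanishes exactly on $\{\,|u|\equiv1\,\}$, and there is no indefinite cross term to be absorbed.

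From here the argument is short. From $\|u(0)-\mathbf e_m\|_{H^1}=\epsilon$, the embedding $H^1(\mathbb S^1)\hookrightarrow L^\infty$, and the fact that mode $m$ has weight $(m-m)^2=0$ in $R$, one gets $R(u(0))\lesssim_m\epsilon^{2+\alpha}$ and $\||u(0)|^2-1\|_{L^2}^2\lesssim_m\epsilon^2$, hence $\mathcal L(u(0))-\mathcal L(\mathbf e_m)\lesssim_m\epsilon^2$; by conservation, $R(u(t))\lesssim_m\epsilon^2$ and $\||u(t)|^2-1\|_{L^2}^2\lesssim_m\epsilon^2$ for all $t$. Since $(k-m)^2\geq c_m(1+k^2)$ for every integer $k\neq m$ (with some $c_m>0$), the first bound yields $\|u(t)-u_m(t)\mathbf e_m\|_{H^1}^2\lesssim_m\epsilon^{-\alpha}R(u(t))\lesssim_m\epsilon^{2-\alpha}$; this division by $\epsilon^\alpha$ — the weak coercivity of the dispersive term — is the sole source of loss, and produces exactly the exponent $1-\tfrac\alpha2$. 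Next, using $|u_m(t)|\leq\|u(t)\|_{L^2}=Q(u_0)^{1/2}\lesssim_m1$ and expanding $|u(t)|^2-|u_m(t)|^2=2\,\mathrm{Re}\big(\overline{u_m(t)}\,e^{-imx}(u(t)-u_m(t)\mathbf e_m)\big)+|u(t)-u_m(t)\mathbf e_m|^2$, one finds $\big||u_m(t)|^2-1\big|\leq\||u(t)|^2-|u_m(t)|^2\|_{L^2}+\||u(t)|^2-1\|_{L^2}\lesssim_m\epsilon^{1-\alpha/2}$, so $\big||u_m(t)|-1\big|\lesssim_m\epsilon^{1-\alpha/2}$. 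Finally, taking $\theta(t)=\arg u_m(t)$ and using that $\mathbf e_m$ and $u(t)-u_m(t)\mathbf e_m$ are $H^1$‑orthogonal,
\[
\inf_{\theta\in\mathbb R}\|u(t)-e^{i\theta}\mathbf e_m\|_{H^1}^2\leq(1+m^2)\big(|u_m(t)|-1\big)^2+\|u(t)-u_m(t)\mathbf e_m\|_{H^1}^2\lesssim_m\epsilon^{2-\alpha}
\]
uniformly in $t$, which is the assertion (for $\epsilon$ bounded away from $0$ the statement is trivial after enlarging $\mathcal C_m$, via $(\ref{estimate of norm H1 de u by energy functional})$).

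The obstacle I would expect is not in any of these estimates but in locating $\mathcal L$ and recognizing the sum‑of‑squares structure: a naive route via the second variation of $E^{\alpha,\epsilon}$ at $\mathbf e_m$ together with a continuity/normal‑form argument runs into the cubic part of $\|u\|_{L^4}^4$ divided by the coercivity constant $\epsilon^\alpha$, which does not close the bootstrap as $\alpha\uparrow 2$; the decomposition $\mathcal L-\mathcal L(\mathbf e_m)=R+\tfrac14\||u|^2-1\|_{L^2}^2$ sidesteps this entirely, and the residual factor $\epsilon^{-\alpha}$ is precisely what degrades the bound from $\epsilon$ (the case $\alpha=0$) to $\epsilon^{1-\alpha/2}$.
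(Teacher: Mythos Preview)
Your proposal is correct and follows essentially the same approach as the paper: both form the conserved Lyapunov combination $E^{\alpha,\epsilon}-\epsilon^\alpha m\,I+\tfrac{\epsilon^\alpha m^2-1}{2}Q$ (your $\mathcal L$, the paper's displayed combination), recognize the sum-of-squares identity $\tfrac{\epsilon^\alpha}{2}\|Du-mu\|_{L^2}^2+\tfrac14\||u|^2-1\|_{L^2}^2$, evaluate at $t=0$ to get $\lesssim_m\epsilon^2$, divide the first piece by $\epsilon^\alpha$ to control $\|u(t)-u_m(t)\mathbf e_m\|_{H^1}$, and then handle the scalar $|u_m(t)|-1$. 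The only cosmetic difference is in this last step: the paper controls $\big||u_m(t)|^2-|u_m(0)|^2\big|$ directly from conservation of $\|u\|_{L^2}^2$, whereas you route through $\||u|^2-1\|_{L^2}$ together with the already-established bound on $u-u_m\mathbf e_m$; both are one-line estimates.
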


\noindent For each $t \in \mathbb{R}$, the infimum can be attained when $\theta= \arg u_m(t)$. A similar result is established by Zhidkov $[\mathbf{\ref{Zhidov qualitative theory book}}$, Sect. $3.3]$ and Gallay--Haragus $[\mathbf{\ref{Gallay--Haragus stability of small periodic waves}}, \mathbf{\ref{Gallay--Haragus ORBITAL stability of periodic waves}}]$ for the 1D cubic Schr\"odinger equation. In small dispersion case, $\mathbf{Theorem}$ $\mathbf{\ref{H1 estimates of the difference TRAVELLING WAVE delta}}$ gives a significant improvement of estimate $(\ref{estimate of norm H1 de u by energy functional})$. We denote by $S_{\alpha,\epsilon}$ the non linear evolution group defined by $(\ref{NLS-Szego epsilon^alpha})$ on $H^{\frac{1}{2}}_+$. In other words, for every $\phi \in H^{\frac{1}{2}}_+$, $t \mapsto S_{\alpha,\epsilon}(t)\phi$ is the solution $u \in C(\mathbb{R}, H^{\frac{1}{2}}_+)$ of equation $(\ref{NLS-Szego epsilon^alpha})$ such that $u(0)=\phi$.

\begin{cor}
For every $m \in \mathbb{N}$, we have
\begin{equation*}
\sup_{\begin{smallmatrix}
0<\epsilon<1 \\0\leq \alpha\leq 2
\end{smallmatrix}}  \sup_{\|\phi-\mathbf{e}_m\|_{H^1}\leq \epsilon}\sup_{t\in  \mathbb{R}}\|S_{\alpha,\epsilon}(t)\phi\|_{H^1} <\infty.
\end{equation*}
\end{cor}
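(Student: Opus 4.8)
The statement is an immediate consequence of Theorem \ref{H1 estimates of the difference TRAVELLING WAVE delta} combined with the triangle inequality, so the plan is short. Fix $m \in \mathbb{N}$ and let $\phi \in H^1_+$ with $\delta := \|\phi - \mathbf{e}_m\|_{H^1} \leq \epsilon < 1$. I would distinguish the degenerate case $\delta = 0$ from the case $\delta > 0$.

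If $\delta = 0$, then $\phi = \mathbf{e}_m$, and one checks directly that $u(t,x) = e^{-i(1+\epsilon^{\alpha}m^2)t}e^{imx}$ solves $(\ref{NLS-Szego epsilon^alpha})$ with $u(0)=\mathbf{e}_m$: indeed $|u|^2 \equiv 1$, so $\Pi(|u|^2 u) = u$, while $i\partial_t u + \epsilon^{\alpha}\partial_x^2 u = (1+\epsilon^{\alpha}m^2)u - \epsilon^{\alpha}m^2 u = u$. By uniqueness (Proposition \ref{GWP for s bigger than 1/2}), $S_{\alpha,\epsilon}(t)\mathbf{e}_m = e^{-i(1+\epsilon^{\alpha}m^2)t}\mathbf{e}_m$, hence $\|S_{\alpha,\epsilon}(t)\mathbf{e}_m\|_{H^1} = \|\mathbf{e}_m\|_{H^1} = \sqrt{1+m^2}$ for every $t \in \mathbb{R}$.

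If $\delta > 0$, I would apply Theorem \ref{H1 estimates of the difference TRAVELLING WAVE delta} with the perturbation size $\delta \in (0,1)$ in place of $\epsilon$: since $u(0)=\phi$ satisfies $\|u(0)-\mathbf{e}_m\|_{H^1}=\delta$, the theorem provides $\mathcal{C}_m>0$ such that
\begin{equation*}
\sup_{t\in\mathbb{R}}\inf_{\theta\in\mathbb{R}}\|S_{\alpha,\epsilon}(t)\phi - e^{i\theta}\mathbf{e}_m\|_{H^1} \leq \mathcal{C}_m\,\delta^{1-\frac{\alpha}{2}} \leq \mathcal{C}_m,
\end{equation*}
the last inequality holding because $\delta<1$ and $1-\frac{\alpha}{2}\geq 0$ for $\alpha\in[0,2]$. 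Then, for every $t$ and every $\theta$,
\begin{equation*}
\|S_{\alpha,\epsilon}(t)\phi\|_{H^1} \leq \|S_{\alpha,\epsilon}(t)\phi - e^{i\theta}\mathbf{e}_m\|_{H^1} + \|\mathbf{e}_m\|_{H^1},
\end{equation*}
and taking the infimum over $\theta$ and the supremum over $t$ yields $\sup_{t\in\mathbb{R}}\|S_{\alpha,\epsilon}(t)\phi\|_{H^1} \leq \mathcal{C}_m + \sqrt{1+m^2}$.

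Combining the two cases, $\|S_{\alpha,\epsilon}(t)\phi\|_{H^1}\leq \mathcal{C}_m+\sqrt{1+m^2}$ for all $t\in\mathbb{R}$, all $\phi$ with $\|\phi-\mathbf{e}_m\|_{H^1}\leq\epsilon$, all $\epsilon\in(0,1)$ and all $\alpha\in[0,2]$; as the right-hand side depends only on $m$, the triple supremum is finite. The only point requiring care — and the one I expect to be the (minor) main obstacle — is to confirm that the constant $\mathcal{C}_m$ furnished by Theorem \ref{H1 estimates of the difference TRAVELLING WAVE delta} can be chosen uniformly in $\epsilon\in(0,1)$ and $\alpha\in[0,2]$; this should be read off from the proof of that theorem, where the dependence on $\epsilon$ and $\alpha$ enters only through quantities that remain bounded on $(0,1)\times[0,2]$.
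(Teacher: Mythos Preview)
Your approach is correct and matches the paper's intent (the paper states the corollary without proof, as an immediate consequence of Theorem~\ref{H1 estimates of the difference TRAVELLING WAVE delta}), and your triangle-inequality argument is exactly what is needed.

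There is one wrinkle you should tighten, and it is not quite the one you flag. In Theorem~\ref{H1 estimates of the difference TRAVELLING WAVE delta} the parameter $\epsilon$ plays \emph{two} roles at once: it is the dispersion parameter in the equation $i\partial_t u+\epsilon^{\alpha}\partial_x^2 u=\Pi(|u|^2u)$ \emph{and} the size of the perturbation $\|u(0)-\mathbf e_m\|_{H^1}=\epsilon$. When you ``apply the theorem with $\delta$ in place of $\epsilon$'' you obtain the bound $\mathcal C_m\,\delta^{1-\alpha/2}$ for the flow $S_{\alpha,\delta}$, not for $S_{\alpha,\epsilon}$; as written, the inequality you display is therefore about the wrong evolution. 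The uniformity of $\mathcal C_m$ in $(\epsilon,\alpha)$ is not the issue --- that is already built into the notation $\mathcal C_m$ --- the issue is decoupling the two roles of $\epsilon$.

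The cleanest fix keeps the theorem as a black box. Given $0<\delta\le\epsilon<1$ and $\alpha\in[0,2]$, set $\alpha':=\alpha\,\dfrac{\ln\epsilon}{\ln\delta}$. Since $\ln\delta\le\ln\epsilon<0$, one has $\alpha'\in[0,\alpha]\subset[0,2]$, and $\delta^{\alpha'}=\epsilon^{\alpha}$, so the evolutions coincide: $S_{\alpha,\epsilon}=S_{\alpha',\delta}$. Now Theorem~\ref{H1 estimates of the difference TRAVELLING WAVE delta} applied with parameters $(\delta,\alpha')$ gives
\[
\sup_{t\in\mathbb R}\inf_{\theta\in\mathbb R}\|S_{\alpha,\epsilon}(t)\phi-e^{i\theta}\mathbf e_m\|_{H^1}\le \mathcal C_m\,\delta^{1-\alpha'/2}\le \mathcal C_m,
\]
and your triangle-inequality step finishes the proof. (Alternatively, as you suggest, one can revisit the proof of Theorem~\ref{H1 estimates of the difference TRAVELLING WAVE delta} and observe that with dispersion $\epsilon^{\alpha}$ and perturbation size $\delta$ the conserved quantity yields $\|Du(t)-mu(t)\|_{L^2}\lesssim_m \delta\,\epsilon^{-\alpha/2}\le\epsilon^{1-\alpha/2}\le1$.)
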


\noindent Compared to $\mathbf{Proposition}$ $\mathbf{\ref{daisy effect}}$  (see G\'erard--Grellier $[\mathbf{\ref{gerardgrellier1}},\mathbf{\ref{gerardgrellier effective dynamic}},\mathbf{\ref{Gerard-grellier explicit formula szego equation}}]$), the dispersive term $\epsilon^{\alpha}\partial_x^2$ counteracts the wave turbulence phenomenon in $H^1$ norm for equation $(\ref{NLS-Szego epsilon^alpha})$, if $0\leq \alpha\leq 2$. After the change of variable $u(t)=e^{i\arg u_m(t)}(\mathbf{e}_m+\epsilon^{1-\frac{\alpha}{2}}v(t))$, we use a bootstrap argument to get long time orbital stability of the traveling waves $\mathbf{e}_m$ with respect to higher Sobolev norms.

\begin{prop}\label{epsilon ^ (alpha/2 -1) time estimate for NLS Szego epsilon alpha}
For all $s \geq 1$ and $m \in \mathbb{N}$, there exist two constants $b_{m,s} \in (0,1)$ and $L_{m,s}>0$ such that if $0\leq \alpha <2$ and  $\|u(0)-\mathbf{e}_m\|_{H^s}=\epsilon \in (0,1)$, then we have
\begin{equation}\label{bootstrap for orbital stability}
\sup_{|t| \leq \frac{b_{m,s}}{\epsilon^{1-\frac{\alpha}{2}}}}  \inf_{\theta\in \mathbb{R}}\|u(t)-e^{i\theta}\mathbf{e}_m\|_{H^s} \leq L_{m,s}  \epsilon^{1-\frac{\alpha}{2}}.
\end{equation}
\end{prop}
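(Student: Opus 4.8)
The plan is to carry out the modulation-and-bootstrap strategy announced before the statement. Write $\delta:=\epsilon^{1-\frac\alpha2}$, let $\theta(t):=\arg u_m(t)$ (the argument of the $m$-th Fourier coefficient of $u(t)$), and set $v(t):=\delta^{-1}\big(e^{-i\theta(t)}u(t)-\mathbf{e}_m\big)$. Two preliminary observations feed everything. First, since $s\ge 1$ we have $\|u(0)-\mathbf{e}_m\|_{H^1}\le\epsilon$, so $\mathbf{Theorem}$ $\mathbf{\ref{H1 estimates of the difference TRAVELLING WAVE delta}}$ yields $\sup_t\inf_\phi\|u(t)-e^{i\phi}\mathbf{e}_m\|_{H^1}\le\mathcal{C}_m\epsilon^{1-\frac\alpha2}=\mathcal{C}_m\delta$; because, for a fixed $t$, the map $\phi\mapsto\|u(t)-e^{i\phi}\mathbf{e}_m\|_{H^1}^2$ is minimized exactly when $e^{i\phi}$ is aligned with $u_m(t)$, the infimum is attained at $\phi=\theta(t)$, whence $\|v(t)\|_{H^1}\le\mathcal{C}_m$ for all $t$; in particular $u_m(t)\ne0$ for $\epsilon$ small, so $\theta$ is well defined and $C^1$. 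Second, the same phase-alignment computation in $H^s$ gives $\|v(0)\|_{H^s}=\delta^{-1}\|u(0)-e^{i\theta(0)}\mathbf{e}_m\|_{H^s}\le\delta^{-1}\|u(0)-\mathbf{e}_m\|_{H^s}=\delta^{-1}\epsilon=\epsilon^{\alpha/2}\le1$.

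Inserting $u=e^{i\theta}(\mathbf{e}_m+\delta v)$ into $(\ref{NLS-Szego epsilon^alpha})$, expanding $\Pi(|u|^2u)$, and using $\Pi\mathbf{e}_m=\mathbf{e}_m$, $\Pi v=v$ and $2\,\mathrm{Re}(\overline{\mathbf{e}_m}v)\,\mathbf{e}_m=v+e^{2imx}\bar v$, one obtains an identity whose $m$-th Fourier mode determines $\dot\theta$ through the requirement that $\widehat{v}_m$ stay real; this gives $\dot\theta=-(1+\epsilon^\alpha m^2)+\delta\,G(v)$ with $G(v)\in\mathbb{R}$ and $|G(v)|\lesssim_m1$ (controlled by $\|v\|_{H^1}$), and
\begin{equation*}
i\partial_t v=\mathcal{L}_{\alpha,\epsilon}v+G(v)\mathbf{e}_m+\delta\Big(G(v)\,v+\Pi\big(2e^{imx}|v|^2+e^{-imx}v^2\big)\Big)+\delta^2\,\Pi(|v|^2v),
\end{equation*}
where $\mathcal{L}_{\alpha,\epsilon}v=(1-\epsilon^\alpha m^2)v-\epsilon^\alpha\partial_x^2v+\Pi(e^{2imx}\bar v)$ is the linearization at $\mathbf{e}_m$. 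The main obstacle is that $\mathcal{L}_{\alpha,\epsilon}$ contains the term $\Pi(e^{2imx}\bar v)$, which is $O(1)$ rather than $O(\delta)$ and is not skew-adjoint for the $H^s$ inner product, so a direct energy estimate on $\|v\|_{H^s}$ would only give growth on the $O(1)$ time scale. The remedy rests on the fact that this term, like $G(v)\mathbf{e}_m$, is supported on the Fourier modes of order $\le 2m$.

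Split $v=v_{\mathrm{low}}+v_{\mathrm{high}}$, where $v_{\mathrm{low}}$ gathers the Fourier modes of $v$ of order $\le2m$ and $v_{\mathrm{high}}$ those of order $>2m$. Since $v_{\mathrm{low}}$ has at most $2m+1$ nonzero modes, $\|v_{\mathrm{low}}(t)\|_{H^s}\le C_{m,s}\|v(t)\|_{L^2}\le C_{m,s}\mathcal{C}_m$ for all $t$, so it remains to bound $\|v_{\mathrm{high}}\|_{H^s}$. Projecting the $v$-equation onto modes $>2m$ annihilates $\Pi(e^{2imx}\bar v)$ and $G(v)\mathbf{e}_m$, leaving
\begin{equation*}
i\partial_t v_{\mathrm{high}}=\mathcal{D}v_{\mathrm{high}}+\delta\,G(v)\,v_{\mathrm{high}}+\delta\,\Pi_{>2m}\big(2e^{imx}|v|^2+e^{-imx}v^2\big)+\delta^2\,\Pi_{>2m}(|v|^2v),
\end{equation*}
with $\mathcal{D}=(1-\epsilon^\alpha m^2)-\epsilon^\alpha\partial_x^2$ a real Fourier multiplier and $\Pi_{>2m}$ the projection onto modes $>2m$. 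Then $\big|\tfrac{d}{dt}\|v_{\mathrm{high}}\|_{H^s}^2\big|=\big|2\,\mathrm{Im}\langle\mathcal{D}v_{\mathrm{high}}+\delta G(v)v_{\mathrm{high}}+(\cdots),v_{\mathrm{high}}\rangle_{H^s}\big|$; the contributions of $\mathcal{D}v_{\mathrm{high}}$ (a real multiplier commuting with $(1-\partial_x^2)^{s/2}$) and of $\delta G(v)v_{\mathrm{high}}$ ($G(v)$ a real scalar) are real and drop out, and the quadratic and cubic terms are estimated by Cauchy–Schwarz together with Moser's product inequality, $\|v\|_{L^\infty}\lesssim\|v\|_{H^1}\le\mathcal{C}_m$, and $\|v\|_{H^s}\le C_{m,s}\mathcal{C}_m+\|v_{\mathrm{high}}\|_{H^s}$, giving
\begin{equation*}
\Big|\tfrac{d}{dt}\|v_{\mathrm{high}}(t)\|_{H^s}^2\Big|\le C_{m,s}\,\delta\big(\|v_{\mathrm{high}}(t)\|_{H^s}+\|v_{\mathrm{high}}(t)\|_{H^s}^2\big).
\end{equation*}

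Since $\|v_{\mathrm{high}}(0)\|_{H^s}\le\|v(0)\|_{H^s}\le1$, a standard continuity argument applied to the last inequality shows that $\|v_{\mathrm{high}}(t)\|_{H^s}\le\sqrt2$ as long as $|t|\le b_{m,s}/\delta$, where $b_{m,s}:=\min\{1/(4C_{m,s}),1/2\}\in(0,1)$. Consequently $\|v(t)\|_{H^s}\le C_{m,s}\mathcal{C}_m+\sqrt2=:L_{m,s}$ on $|t|\le b_{m,s}/\delta=b_{m,s}/\epsilon^{1-\frac\alpha2}$, and therefore
\begin{equation*}
\inf_{\theta\in\mathbb{R}}\|u(t)-e^{i\theta}\mathbf{e}_m\|_{H^s}\le\|u(t)-e^{i\theta(t)}\mathbf{e}_m\|_{H^s}=\delta\,\|v(t)\|_{H^s}\le L_{m,s}\,\epsilon^{1-\frac\alpha2},
\end{equation*}
which is $(\ref{bootstrap for orbital stability})$. (For $\epsilon$ bounded away from $0$ the time interval is bounded, and the claim is then immediate from the continuity of the flow in $\mathbf{Proposition}$ $\mathbf{\ref{GWP for s bigger than 1/2}}$.) The one point requiring genuine care is the derivation of the $v$-equation and the identification of the bad linear term together with its finite-dimensional Fourier support; once the low/high splitting is in place, the remainder is a routine Gronwall bootstrap.
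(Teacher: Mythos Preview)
Your proposal is correct and follows essentially the same route as the paper: the change of variable $u=e^{i\arg u_m}(\mathbf{e}_m+\epsilon^{1-\alpha/2}v)$, the uniform $H^1$ bound on $v$ from $\mathbf{Theorem}~\mathbf{\ref{H1 estimates of the difference TRAVELLING WAVE delta}}$ to control the low modes $\mathbb{P}_{2m}v$, the observation that the Hankel term $\Pi(e^{2imx}\bar v)$ and the $\mathbf{e}_m$ source vanish on modes $>2m$, and a bootstrap on $\|v-\mathbb{P}_{2m}v\|_{H^s}$ via the energy identity where the real Fourier multiplier and the real scalar $G(v)$ drop out. The only cosmetic difference is that you invoke the $L^\infty$ bound on $v$ (via $H^1$) together with Moser's product estimate, whereas the paper uses the algebra property of $H^s$ directly and packages the derivation of the $v$-equation and the bound on $\dot\theta$ into its $\mathbf{Proposition}~\mathbf{\ref{proposition to define theta varphi alpha<<1}}$; your treatment of the $\epsilon$-not-small regime via continuity of the flow is also something the paper leaves implicit.
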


\noindent We also look for a larger time interval in which the estimate $(\ref{bootstrap for orbital stability})$ holds, by using the Birkhoff normal form transformation. But the coefficients in front of the high frequency Fourier modes in the homological equation may be arbitrarily large, if $\alpha \in(0,2)$. For this reason, we return to the case $\alpha=0$ and consider equation $(\ref{NLS-Szego})$. 
\begin{equation*}
i \partial_t u + \partial_x^2 u = \Pi(|u|^2 u), \qquad
u(0,\cdot)= u_0.
\end{equation*}Then the time interval can be enlarged as $[-\frac{d_{m,s}}{\epsilon^2},\frac{d_{m,s}}{\epsilon^2}]$ in this case.

\begin{theorem}\label{effective dynamic of equation NLSF with u0=exp(ix)+epsilon, t leq epsilon^(-2)}
In the case $\alpha=0$, for all $s \geq 1$ and $m \in \mathbb{N}$, there exist three constants $d_{m,s}, \epsilon_{m,s} \in (0,1)$ and $K_{m,s}>0$ such that if $\|u(0)-\mathbf{e}_m\|_{H^s}=\epsilon \in (0, \epsilon_{m,s})$, then we have
\begin{equation*}
\sup_{|t|\leq \frac{d_{m,s}}{\epsilon^2}}\inf_{\theta\in \mathbb{R}}\|u(t)-e^{i\theta}\mathbf{e}_m\|_{H^s} \leq K_{m,s} \epsilon.
\end{equation*}
\end{theorem}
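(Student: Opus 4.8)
The plan is to upgrade the global $H^1$-orbital stability already in hand — Theorem~\ref{H1 estimates of the difference TRAVELLING WAVE delta} with $\alpha=0$ gives $\sup_t\inf_\theta\|u(t)-e^{i\theta}\mathbf{e}_m\|_{H^1}\lesssim_m\epsilon$ — to the level of $H^s$, $s\ge1$, on the longer interval $|t|\lesssim\epsilon^{-2}$, by performing one step of Birkhoff normal form in order to trade the quadratic part of the nonlinearity for a cubic one. First I would modulate out the phase: while $u(t)$ stays $\mathcal O(\epsilon)$-close to the circle $\{e^{i\theta}\mathbf{e}_m:\theta\in\mathbb R\}$ in $H^1$, write $u(t,x)=e^{-i(m^2+1)t+i\theta(t)}\bigl(\mathbf{e}_m+v(t,x)\bigr)$, $v(t)\in L^2_+$, with the real modulation $\theta(t)$ fixed by the gauge condition $\langle v(t),i\mathbf{e}_m\rangle_{L^2}=0$ (solvable by the implicit function theorem while $v$ is small). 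Then $v$ solves a Hamiltonian equation $i\partial_t v=\widetilde{\mathcal L}v+\mathcal Q(v)+\Pi(|v|^2v)$, where $\widetilde{\mathcal L}$ is the phase-reduced linearization at $\mathbf{e}_m$ — a finite-rank (supported on modes $\le2m$) modification of $v\mapsto-\partial_x^2v-(m^2-1)v+\Pi(e^{2imx}\overline v)$ — and $\mathcal Q(v)$ collects the quadratic nonlinearity $\Pi\bigl(2|v|^2\mathbf{e}_m+e^{-imx}v^2\bigr)$ together with a quadratic modulation term. Here $\|v(0)\|_{H^s}\simeq\epsilon$, $\sup_t\|v(t)\|_{H^1}\lesssim_m\epsilon$, and Proposition~\ref{epsilon ^ (alpha/2 -1) time estimate for NLS Szego epsilon alpha} with $\alpha=0$ provides $\|v(t)\|_{H^s}\lesssim_{m,s}\epsilon$ for $|t|\le b_{m,s}/\epsilon$.

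Writing the conserved energy in this frame, after subtracting suitable multiples of $Q$ and $I$, as $\mathcal H=\mathcal H_2+\mathcal H_3+\mathcal H_4$ with $\mathcal H_2=\tfrac12\langle\widetilde{\mathcal L}v,v\rangle$, $\mathcal H_3=\mathrm{Re}\int_{\mathbb S^1}e^{-imx}v\,|v|^2\,\tfrac{dx}{2\pi}$ (the genuine cubic term produced by expanding $\tfrac14\|u\|_{L^4}^4$), $\mathcal H_4=\tfrac14\|v\|_{L^4}^4$, I would solve the homological equation $\{\mathcal H_2,\chi\}+\mathcal H_3=Z_3$ for a cubic $\chi$. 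Every monomial of $\mathcal H_3$ is $v_{n_1}v_{n_2}\overline{v_{n_3}}$ (or its conjugate) with $n_1+n_2=n_3+m$; for modes above $2m$, where $\widetilde{\mathcal L}$ is diagonal with eigenvalues $\omega_n=n^2-m^2+1$, the corresponding divisor is, by the constraint, $\omega_{n_1}+\omega_{n_2}-\omega_{n_3}=1-2(n_1-m)(n_2-m)$, a nonzero odd integer of modulus $\ge1$ that moreover grows with $n_1,n_2$ — so $\chi$ loses no derivatives and its time-one flow $\tau=\Phi_\chi^{-1}$ is a near-identity diffeomorphism of a small ball of each $H^\sigma_+$, $\sigma\ge\tfrac12$. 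The only surviving resonances sit in the gauge-mode channel $n_1=m$ (or $n_2=m$): $Z_3$ is then a real multiple of the real coordinate $v_m$ times $\sum_kc_k|v_k|^2$, its Hamiltonian vector field is a purely imaginary multiple of each $v_j$ by a real quantity, so it makes zero contribution to $\tfrac{d}{dt}\|w\|_{H^s}^2$ for $w=\tau(v)$. (The finitely many modes $0\le n\le2m$, where $\widetilde{\mathcal L}$ carries off-diagonal $\{v_n,\overline{v_{2m-n}}\}$ coupling and a degenerate direction at $n=m$ plus near-degenerate ones attached to $Q,I$, are first brought to normal form by the Bogoliubov rotation of the $2\times2$ blocks; the remaining finite-rank piece only involves low modes and is harmless because $\|v\|_{H^1}$ — hence every low-mode functional — is already $\lesssim\epsilon$ for all time.)

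With $w=\tau(v)$, so $\|w\|_{H^\sigma}\simeq\|v\|_{H^\sigma}$ on the relevant range, the transformed equation reads $i\partial_t w=\widetilde{\mathcal L}w+W(w)+R(w)$, where $W$ is the (harmless) vector field of $Z_3$ and $R(w)$ gathers only at-least-cubic terms — the transformed $\Pi(|w|^2w)$, the transformed modulation term, and the $\chi$-induced corrections — plus a term along $\mathbf{e}_m$ that does not affect $\|w\|_{H^s}$; by $H^1(\mathbb S^1)\hookrightarrow L^\infty$ and a tame product estimate, $\|R(w)\|_{H^s}\lesssim_{m,s}\|w\|_{L^\infty}^2\|w\|_{H^s}\lesssim_{m,s}\|w\|_{H^1}^2\|w\|_{H^s}$. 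Since $W$ drops out of the energy identity, $\tfrac{d}{dt}\|w(t)\|_{H^s}^2=2\,\mathrm{Im}\langle R(w),w\rangle_{H^s}\lesssim_{m,s}\|w\|_{H^1}^2\|w\|_{H^s}^2\lesssim_{m,s}\epsilon^2\|w(t)\|_{H^s}^2$, and Gronwall yields $\|w(t)\|_{H^s}^2\le\|w(0)\|_{H^s}^2e^{C_{m,s}\epsilon^2|t|}\le K_{m,s}^2\epsilon^2$ for $|t|\le d_{m,s}/\epsilon^2$, with $d_{m,s},K_{m,s}$ chosen accordingly; continuity of $t\mapsto\|v(t)\|_{H^s}$ (Proposition~\ref{GWP for s bigger than 1/2}) together with the short-time bound of Proposition~\ref{epsilon ^ (alpha/2 -1) time estimate for NLS Szego epsilon alpha} keeps $\tau$ in its domain throughout. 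Undoing $\tau$ and the modulation gives $\inf_\theta\|u(t)-e^{i\theta}\mathbf{e}_m\|_{H^s}\le\|v(t)\|_{H^s}\lesssim_{m,s}\epsilon$ on $|t|\le d_{m,s}/\epsilon^2$, with $\epsilon_{m,s}$ the smallness threshold coming from the modulation and the normal form.

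I expect the main obstacle to be the low-frequency block rather than the high modes: since $\widetilde{\mathcal L}$ is not diagonal on $0\le n\le2m$ and is degenerate in the phase direction — and nearly so along $\nabla Q$ and $\nabla I$ — the homological equation cannot be solved there verbatim, and a careless normal form would manufacture spurious small or zero divisors at low frequencies. Getting this right forces one to modulate out the phase symmetry first, to lean on the exact conservation of $Q$ and $I$ and on the all-time $H^1$ bound of Theorem~\ref{H1 estimates of the difference TRAVELLING WAVE delta} to absorb those modes, and to check carefully that (i) the surviving resonant term $Z_3$ really acts only as a time-dependent frequency shift and drops out of the $H^s$ energy identity, and (ii) the modulation equation does not secretly reinject a resonant quadratic term that would limit the time scale back to $\epsilon^{-1}$. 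That verification is where I would expect most of the technical work to lie.
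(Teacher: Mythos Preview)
Your strategy is essentially the paper's: modulate out the phase, perform one step of Birkhoff normal form on the cubic-in-$v$ term, and lean on the global $H^1$ bound from Theorem~\ref{H1 estimates of the difference TRAVELLING WAVE delta} for the low modes. The divisor $1-2(n_1-m)(n_2-m)$ you compute for high-mode monomials is exactly the denominator appearing in the paper's explicit solution~(\ref{Solution of linear system of all a _kln}) at $\alpha=0$.

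Where the two routes diverge is in handling the low-frequency block $0\le n\le 2m$, where the Hankel piece $\Pi(e^{2imx}\bar v)$ makes $\mathcal H_0^m$ non-diagonal. You propose a Bogoliubov rotation of the $2\times2$ blocks first, then solve the homological equation in the diagonalized variables and identify the surviving resonant term $Z_3$ as a gauge-channel frequency shift. The paper takes a more direct path: it keeps the non-diagonal $\mathcal H_0^m$ and solves the homological equation $\{\mathcal F_m,\mathcal H_0^m\}+\mathcal H_1^m=\mathcal R_m$ as a coupled linear system for the coefficients $a_{j,l,k}$ (the mixed low--high monomials produce $2\times2$ systems for pairs $(a_{j,k,k+m-j},a_{2m-j,m+k-j,k})$, solved in~(\ref{Solution of linear system of all a _kln})). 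The outcome is that $\mathcal R_m$ depends \emph{only} on modes $v_1,\dots,v_{3m}$; no claim about its structure as a frequency shift is needed. The paper then bounds not the full $\|w\|_{H^s}^2$ but the high-frequency part $\|w-\mathbb P_{3m}w\|_{H^s}^2$ (Lemma~\ref{Birkhoff normal form transformation for travelling wave alpha=0}), to which $\mathcal R_m$ simply does not contribute, and closes with a bootstrap rather than Gronwall.

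Two points deserve care in your version. First, the non-autonomous modulation: your $\theta'(t)$ feeds back into the $v$-equation and, at face value, produces an $\mathcal O(\epsilon)$ (not $\mathcal O(\epsilon^2)$) term after the normal form. The paper treats this by imposing a \emph{second} homological equation $\{\mathcal F_m,\mathcal L_m\}=-\tilde{\mathcal N}_2$ (Proposition~\ref{homological equation alpha =0 prop}), which fixes $a_{m,k,k}=\tfrac{i}{2}$ and cancels the dangerous contribution of $\varphi'(t)\mathcal L_m$ against $\tfrac12\varphi'(t)\mathcal N_2$ at order $\epsilon$; your implicit-function modulation should achieve the same effect, but this is exactly the step you flag at the end and it does need to be checked. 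Second, your assertion that $Z_3$ is precisely $(\mathrm{Re}\,v_m)\sum_kc_k|v_k|^2$ is stronger than required and not obviously correct once the Bogoliubov rotation has reshuffled the cubic monomials; it is enough --- and this is what the paper actually proves --- that the residual cubic term involves only finitely many low modes, whose $H^s$ norm is controlled by the $H^1$ norm.
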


\noindent A similar result is obtained in Faou--Gauckler--Lubich $[\mathbf{\ref{faou--Gauckler--lubich Sobolev Stability of Plane Wave}}]$ for the focusing or defocusing cubic Schrödinger equation on the arbitrarily dimensional torus. (see $\mathbf{Section}$ $\mathbf{\ref{comparison NLS NLSSZEGO}}$ for the comparison between $(\ref{NLS-Szego})$ and $(\ref{cubic dNLS equation})$)\\

\noindent After stating the stability results, we turn to construct a large solution for $(\ref{NLS-Szego epsilon^alpha})$ with respect to the initial data, if the level of dispersion is exponentially small with respect to the level of perturbation of the plane wave $\mathbf{e}_1: x \mapsto e^{ix}$. We state the last result of this paper.

\begin{theorem}\label{Transfer of energy to high frequencies for the NLS Szego equation with very small dispersion}
There exists a constant $K>0$ such that for all $0<\delta \ll 1$, we denote by $U$ the solution of the following NLS-Szeg\H{o} equation with small dispersion
\begin{equation}\label{rescaling NLS Szego equation with initial data exp(ix)+delta}
i\partial_t U + \nu^2 \partial^2_x U = \Pi(|U|^2U), \qquad U(0,x)=e^{ix}+\delta,
\end{equation}where $\nu=e^{-\frac{\pi K}{2 \delta^2}}$, then we have $\|U(t^{\delta})\|_{H^1} \simeq \frac{1}{\delta}$ with $t^{\delta}:=\frac{\pi}{\delta \sqrt{4+\delta^2}}$.
\end{theorem}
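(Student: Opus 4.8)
The plan is to realize the solution $U$ of $(\ref{rescaling NLS Szego equation with initial data exp(ix)+delta})$, on the interval $[0,t^{\delta}]$, as a small perturbation of the solution $V$ of the cubic Szeg\H{o} equation $(\ref{cubic szego equation})$ with the same initial datum, and to transfer to $U$ the $H^1$ turbulence of the Szeg\H{o} flow supplied by Proposition~\ref{daisy effect}. Denote by $V$ the solution of $i\partial_t V=\Pi(|V|^2V)$ with $V(0,x)=e^{ix}+\delta$. By Proposition~\ref{daisy effect} and the explicit formula for the cubic Szeg\H{o} flow, $V(t)$ is for every $t$ a rational function whose Hankel operator has rank $2$; using the conserved quantities $Q(V)=\|V\|_{L^2}^2=1+\delta^2$ and $I(V)=\|V\|_{\dot H^{1/2}}^2=1$, the poles of $V(t)$ stay at distance $\gtrsim\delta^2$ from $\mathbb{S}^1$ on $[0,t^{\delta}]$, this distance being of size $\simeq\delta^2$ exactly at $t=t^{\delta}$. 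Consequently
\begin{equation*}
\sup_{0\le t\le t^{\delta}}\|V(t)\|_{L^{\infty}}\lesssim 1,\qquad \sup_{0\le t\le t^{\delta}}\|V(t)\|_{H^1}\lesssim\tfrac1\delta,\qquad \sup_{0\le t\le t^{\delta}}\|V(t)\|_{H^3}\lesssim\delta^{-5},\qquad \|V(t^{\delta})\|_{H^1}\simeq\tfrac1\delta .
\end{equation*}
The decisive point here is the \emph{uniform} $L^{\infty}$ bound: a rank-$2$ rational function remains bounded in $L^{\infty}$ even when its $H^1$-norm blows up, since that blow-up comes only from differentiating near the poles.

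Next, set $w:=U-V$. By Proposition~\ref{GWP for s bigger than 1/2} (applied with $u_0=e^{ix}+\delta\in C^{\infty}_+$) and the smoothness of $V$, one has $w\in C(\mathbb{R},H^s_+)$ for every $s\ge\frac12$, $w(0)=0$, and
\begin{equation*}
i\partial_t w+\nu^2\partial_x^2 w=\Pi\!\left((|U|^2+|V|^2)w+V^2\overline w+|w|^2V\right)-\nu^2\partial_x^2 V .
\end{equation*}
I would run a continuity argument on $[0,T]$ under the bootstrap assumption $\|w(t)\|_{H^1}\le 1$, which together with the bounds on $V$ forces $\|U(t)\|_{L^{\infty}}\lesssim 1$ and $\|U(t)\|_{H^1}\lesssim\tfrac1\delta$. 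In the $H^1$ energy estimate the dispersive term $\nu^2\partial_x^2 w$ is skew-adjoint and drops out; estimating the cubic difference by the product rule in $H^1$ and the embedding $H^1\hookrightarrow L^{\infty}$, and using $\|U\|_{L^{\infty}},\|V\|_{L^{\infty}}\lesssim1$, $\|U\|_{H^1},\|V\|_{H^1}\lesssim\delta^{-1}$ and $\|w\|_{H^1}\le1$, one gets
\begin{equation*}
\left\|\Pi\!\left((|U|^2+|V|^2)w+V^2\overline w+|w|^2V\right)\right\|_{H^1}\lesssim\tfrac1\delta\,\|w\|_{H^1},\qquad \|\nu^2\partial_x^2V\|_{H^1}=\nu^2\|V\|_{H^3}\lesssim\nu^2\delta^{-5}.
\end{equation*}
Hence $\frac{d}{dt}\|w(t)\|_{H^1}\le\frac{C}{\delta}\|w(t)\|_{H^1}+C\nu^2\delta^{-5}$ for an absolute constant $C$, and Gronwall's inequality with $w(0)=0$ yields $\|w(t)\|_{H^1}\le\nu^2\delta^{-4}e^{Ct/\delta}$ on $[0,T]$.

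Since $t^{\delta}=\frac{\pi}{\delta\sqrt{4+\delta^2}}\le\frac{\pi}{2\delta}$, inserting $t=t^{\delta}$ and $\nu=e^{-\pi K/(2\delta^2)}$ gives $\|w(t^{\delta})\|_{H^1}\le\delta^{-4}e^{-\pi(K-C/2)/\delta^2}$. Choosing $K:=\frac{C}{2}+1$, the right-hand side is $\ll 1$ for $0<\delta\ll1$, so the bootstrap closes and $T$ may be taken equal to $t^{\delta}$; moreover it is $o(\delta^{-1})$. Therefore $\|U(t^{\delta})\|_{H^1}=\|V(t^{\delta})\|_{H^1}+O\!\left(\|w(t^{\delta})\|_{H^1}\right)\simeq\tfrac1\delta$, as claimed.

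The main obstacle is to keep the Gronwall exponent of order $O(\delta^{-2})$ over the time scale $t^{\delta}\sim\delta^{-1}$: a crude $H^1$ estimate of the cubic nonlinearity would produce the factor $\|U\|_{H^1}^2\sim\delta^{-2}$ in the exponent, hence $e^{O(\delta^{-3})}$, which would force $\nu\lesssim e^{-c\delta^{-3}}$, far below the admissible size $e^{-\pi K/(2\delta^2)}$. The improvement to $e^{O(\delta^{-2})}$ — exactly what the hypothesis on $\nu$ can absorb — hinges entirely on the uniform bound $\|V(t)\|_{L^{\infty}}\lesssim 1$ on $[0,t^{\delta}]$ (and on a merely polynomial $H^3$-bound for $V$); so the genuine work is to read off these bounds from the explicit daisy-type cubic Szeg\H{o} solution behind Proposition~\ref{daisy effect}.
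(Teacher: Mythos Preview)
Your overall strategy coincides with the paper's: compare $U$ to the explicit Szeg\H{o} solution $V$ with the same datum, derive a linear differential inequality for $\|U-V\|_{H^1}$ with a coefficient of order $\delta^{-1}$ and a forcing $\nu^2\|V\|_{H^3}$, apply Gronwall over $[0,t^{\delta}]\sim[0,\delta^{-1}]$, and choose $K$ so that $\nu^2=e^{-\pi K/\delta^2}$ absorbs the exponential loss $e^{C/\delta^2}$.

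There is, however, one genuine gap. Your justification of the uniform bound $\sup_{[0,t^{\delta}]}\|V(t)\|_{L^{\infty}}\lesssim 1$ is not valid as written. The sentence ``a rank-$2$ rational function remains bounded in $L^{\infty}$ even when its $H^1$-norm blows up, since that blow-up comes only from differentiating near the poles'' is false: for instance $\frac{1}{1-p\,e^{ix}}$ has $\|\cdot\|_{L^{\infty}}=(1-|p|)^{-1}\to\infty$ as $|p|\to 1$, with no differentiation involved. The fact that $\|V(t)\|_{L^{\infty}}$ stays bounded along the Szeg\H{o} flow is \emph{not} a generic property of low-rank rational functions with poles approaching the circle; it is a consequence of the Lax pair structure. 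The paper obtains it from Corollary~\ref{L infinity estimate}: the spectrum of $H_V$ is conserved, hence so is $\mathrm{Tr}\,|H_V|\simeq\|V\|_{B^1_{1,1}}$, and $B^1_{1,1}\hookrightarrow L^{\infty}$. You should invoke that corollary (or check the $L^\infty$ bound directly from the explicit formula, which requires seeing the cancellation between numerator and denominator at the near-pole) rather than the rank heuristic. Once this is fixed, your argument and the paper's are essentially identical.
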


\noindent This $H^1$-instability result indicates that the support of the energy functional of equation $(\ref{rescaling NLS Szego equation with initial data exp(ix)+delta})$ is transferred to higher Fourier modes. This phenomenon is similar to the cubic Szeg\H{o} equation case (see G\'erard--Grellier $[\mathbf{\ref{gerardgrellier growth of sobolev norm}}, \mathbf{\ref{Gerard-grellier explicit formula szego equation}}$, $\mathbf{\ref{Gerard grellier book cubic szego equation and hankel operators}}]$) and the 2D cubic NLS case (see Colliander--Keel--Staffilani--Takaoka--Tao $[\mathbf{\ref{Colliander J., Keel M., Staffilani G.,Takaoka H., Tao Transfer of energy }}]$). Compared to $\mathbf{Theorem}$ $\mathbf{\ref{H1 estimates of the difference TRAVELLING WAVE delta}}$, adding the low-level dispersion $e^{-\frac{\pi K}{ \delta^2}} \partial_x^2$ fails to change the quality of wave turbulence phenomenon ($\mathbf{Proposition}$ $\mathbf{\ref{daisy effect}}$) for the cubic Szeg\H{o} equation. \\

\noindent The second part of $\mathbf{Theorem}$ $\mathbf{ \ref{orbital stability epsilon^(4-alpha) estimates}}$ is a consequence of $\mathbf{Theorem}$ $\mathbf{\ref{Transfer of energy to high frequencies for the NLS Szego equation with very small dispersion}}$. Indeed, if $\alpha>2$, we rescale $u(t,x)=\epsilon U(\epsilon^2 t, x)$ with $e^{-\frac{\pi K}{ 2\delta^2}}=\nu = \epsilon^{\frac{\alpha-2}{2}}$. Then $u$ solves $(\ref{NLS-Szego epsilon^alpha})$ with $u(0,x) = \epsilon (e^{ix}+\delta)$ and 
\begin{equation*}
\|u(\frac{t^{\delta}}{\epsilon^2})\|_{H^1}=\epsilon \|U(t^{\delta})\|_{H^1}\simeq \frac{\epsilon}{\delta}\simeq \epsilon \sqrt{(\alpha-2)| \ln \epsilon|} \gg \epsilon,
\end{equation*}while $\frac{t^{\delta}}{\epsilon^2}\simeq \frac{|\ln \epsilon|}{\epsilon^2} \ll \frac{1}{\epsilon^{2+\beta}}$, for all $\beta >0$. However, this method does not work in the critical case $\alpha=2$. If $u$ solves 
\begin{equation*}
i\partial_t u +\epsilon^2 \partial_x^2 u =\Pi(|u|^2 u), \qquad u(0,x)=\epsilon(e^{ix}+\delta),
\end{equation*}after rescaling $U(t,x)=\epsilon^{-1}u(\epsilon^{-2}t, x)$, we get equation $(\ref{rescaling NLS Szego equation with initial data exp(ix)+delta})$ with $\nu=1$, leading to $(\ref{NLS-Szego})$ with initial data $U(0,x)=e^{ix}+\delta$. $\mathbf{Theorem}$ $\mathbf{\ref{H1 estimates of the difference TRAVELLING WAVE delta}}$ and $\mathbf{Theorem}$ $\mathbf{\ref{effective dynamic of equation NLSF with u0=exp(ix)+epsilon, t leq epsilon^(-2)}}$ yield the following two estimates 
\begin{equation*}
\sup_{t\in \mathbb{R}}\|u(t)\|_{H^1} = \mathcal{O}(\epsilon) , \qquad \sup_{|t|\leq \frac{d_{1,s}}{\epsilon^2\delta^2}}\|u(t)\|_{H^s} = \mathcal{O}(\epsilon), \qquad \forall 0 < \delta \ll 1, \quad \forall 0 <\epsilon<1,
\end{equation*}for every $s > \frac{1}{2}$. The problem of the optimal time interval in the case $\alpha=2$ of $\mathbf{Theorem}$ $\mathbf{\ref{orbital stability epsilon^(4-alpha) estimates}}$ remains open.\\

\noindent This paper is organized as follows. In $\mathbf{Section}$ $\mathbf{\ref{Szego equation recall}}$, we recall some basic facts of the cubic Szeg\H{o} equation and its consequences. In $\mathbf{Section}$ $\mathbf{\ref{section small data}}$, we study long time behavior for $(\ref{NLS-Szego epsilon^alpha})$ with small data and prove $\mathbf{Theorem}$ $\mathbf{\ref{Transfer of energy to high frequencies for the NLS Szego equation with very small dispersion}}$ and $\mathbf{Theorem}$ $\mathbf{\ref{orbital stability epsilon^(4-alpha) estimates}}$. In $\mathbf{Section}$ $\mathbf{\ref{Orbital stability section}}$, we study  the orbital stability of the plane waves $\mathbf{e}_m$ for $(\ref{NLS-Szego epsilon^alpha})$ for every $m \in \mathbb{N}$ and give the proof of $\mathbf{Theorem}$ $\mathbf{\ref{H1 estimates of the difference TRAVELLING WAVE delta}}$, $\mathbf{Proposition}$ $\mathbf{\ref{epsilon ^ (alpha/2 -1) time estimate for NLS Szego epsilon alpha}}$ and $\mathbf{Theorem}$ $\mathbf{\ref{effective dynamic of equation NLSF with u0=exp(ix)+epsilon, t leq epsilon^(-2)}}$. We compare the NLS equation and the NLS-Szeg\H{o} equation in $\mathbf{Section}$ $\mathbf{\ref{comparison NLS NLSSZEGO}}$.

\begin{center}
$Acknowledgments$
\end{center}
The author would like to express his gratitude towards Patrick Gérard for his deep insight, generous advice and continuous encouragement. He also would like to thank Jean-Marc Delort, Benoit Grébert, Sandrine Grellier and Thomas Kappeler for useful discussions.

\section{The cubic Szeg\H{o} equation}\label{Szego equation recall}
In this section, we recall some results of the cubic Szeg\H{o} equation
\begin{equation}\label{cubic szego equation appendix}
i \partial_t V= \Pi( |V|^2 V), \qquad V(0,\cdot)=V_0.
\end{equation}

\subsection{The Lax pair structure}
Given $V\in H^{\frac{1}{2}}_+$, the Hankel operator $H_V : L^2_+ \to L^2_+$ is defined by 
\begin{equation*}
H_V(h) = \Pi(V\bar{h}).
\end{equation*}Given $b \in L^{\infty}(\mathbb{S}^1)$, the Toeplitz operator $T_b :L^2_+ \to L^2_+$ is defined by 
\begin{equation*}
T_b(h) = \Pi(b h).
\end{equation*}
\begin{theorem}(G\'erard--Grellier $[\mathbf{\ref{gerardgrellier1}}]$)
Set $V \in C(\mathbb{R};H^s_+)$ for some $s> \frac{1}{2}$. Then $V$ solves the cubic Szeg\H{o} equation if and only if $H_V$ satisfies the following evolutive equation
\begin{equation}\label{Lax equation for Szego}
\partial_t H_V = [B_V,H_V].
\end{equation}where $B_V:=\frac{i}{2} H_V^2-iT_{|V|^2}$. In other words, $(L_V, B_V)$ is a Lax pair for the cubic Szeg\H{o} equation.
\end{theorem}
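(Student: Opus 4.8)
The plan is to reduce the equivalence to a single time-independent operator identity, verify that identity by a short computation with the Szeg\H{o} projector, and then read off both implications. Start with routine preliminaries: $W\mapsto H_W$ is $\mathbb{C}$-linear and injective, since $H_W(\mathbf 1)=\Pi(W)=W$ for the constant function $\mathbf 1$; and because $H^s(\mathbb{S}^1)$ is a Banach algebra for $s>\tfrac12$, one has $|V|^2V\in H^s_+$ and the operators $T_{|V|^2},H_V,H_V^2$ are bounded on $L^2_+$ and depend continuously on $V\in H^s_+$, while $\partial_t H_{V(t)}=H_{\partial_t V(t)}$ whenever $t\mapsto V(t)$ is $C^1$. (Conversely, if the Lax identity holds then its right-hand side is continuous in $t$, which forces $t\mapsto H_{V(t)}$, and hence $t\mapsto H_{V(t)}(\mathbf 1)=V(t)$, to be $C^1$.) Since the cubic Szeg\H{o} equation reads $\partial_t V=-i\,\Pi(|V|^2V)$, injectivity of $W\mapsto H_W$ shows it is equivalent to $\partial_t H_V=-i\,H_{\Pi(|V|^2V)}$, so the theorem reduces to the operator identity
\[
[B_V,H_V]=-i\,H_{\Pi(|V|^2V)},\qquad V\in H^s_+,\ s>\tfrac12 .
\]

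Two computations then do the job. First, from $B_V=\tfrac{i}{2}H_V^2-iT_{|V|^2}$, watching that $H_V$ is \emph{conjugate}-linear (a scalar carried through $H_V$ is conjugated), one gets $B_VH_V=\tfrac{i}{2}H_V^3-iT_{|V|^2}H_V$ and $H_VB_V=-\tfrac{i}{2}H_V^3+iH_VT_{|V|^2}$, hence
\[
[B_V,H_V]=i\bigl(H_V^3-T_{|V|^2}H_V-H_VT_{|V|^2}\bigr).
\]
Second, the Hankel identity
\[
H_{\Pi(|V|^2V)}=T_{|V|^2}H_V+H_VT_{|V|^2}-H_V^3 ,
\]
which on multiplication by $-i$ is precisely the commutator above, so the two together close the argument. (For the ``if'' direction alone one need not even pass to operators: evaluating the Lax identity at $\mathbf 1$ and using $H_V(\mathbf 1)=V$, $H_V^2(\mathbf 1)=H_V(V)=\Pi(|V|^2)$, $T_{|V|^2}(\mathbf 1)=\Pi(|V|^2)$ gives $\partial_t V=[B_V,H_V](\mathbf 1)=-i\,\Pi(|V|^2V)$ at once.)

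The substance is the Hankel identity, which is pure Fourier bookkeeping for $\Pi$. Fix $h\in L^2_+$. Since $(\mathrm{Id}-\Pi)(|V|^2V)$ has frequencies in $\mathbb{Z}_{<0}$ and $\overline h$ in $\mathbb{Z}_{\le0}$, the product $(\mathrm{Id}-\Pi)(|V|^2V)\,\overline h$ has frequencies in $\mathbb{Z}_{<0}$, so $\Pi\bigl(\Pi(|V|^2V)\,\overline h\bigr)=\Pi(|V|^2V\overline h)$. Write $V\overline h=H_V(h)+r$ with $r:=(\mathrm{Id}-\Pi)(V\overline h)$, supported in $\mathbb{Z}_{<0}$: the first piece gives $\Pi(|V|^2H_V(h))=T_{|V|^2}H_V(h)$ because $H_V(h)\in L^2_+$, and for the second $|V|^2r=V\cdot(\overline V r)$ with $\overline V r$ supported in $\mathbb{Z}_{<0}$, so $V\overline r:=\overline{\overline V r}\in L^2_+$ and $\Pi(|V|^2r)=H_V(V\overline r)$ by the definition of $H_V$; finally $V\overline r$ is its own projection, so substituting $\overline r=\overline V h-\overline{H_V(h)}$ gives $V\overline r=\Pi(|V|^2h)-\Pi(V\overline{H_V(h)})=T_{|V|^2}(h)-H_V^2(h)$, whence $\Pi(|V|^2r)=H_V\bigl(T_{|V|^2}(h)-H_V^2(h)\bigr)=H_VT_{|V|^2}(h)-H_V^3(h)$. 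Adding the two contributions proves the identity. I expect no real obstacle: the identity is elementary, and the only places where errors are likely are tracking Fourier supports while moving $\Pi$ through products and the conjugate-linearity of $H_V$ in the commutator step; once $s>\tfrac12$ furnishes the algebra property of $H^s$, no analytic difficulty remains.
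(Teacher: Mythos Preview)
Your proof is correct. The paper itself does not supply a proof of this theorem---it is simply quoted from G\'erard--Grellier~$[\mathbf{\ref{gerardgrellier1}}]$---so there is no in-paper argument to compare against. Your approach (reducing to the operator identity $H_{\Pi(|V|^2V)}=T_{|V|^2}H_V+H_VT_{|V|^2}-H_V^3$ via Fourier-support bookkeeping, and handling the conjugate-linearity of $H_V$ carefully in the commutator) is exactly the standard one from the original G\'erard--Grellier paper, so you have reproduced the canonical proof.
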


\noindent The equation $(\ref{Lax equation for Szego})$ yields that the spectrum of the Hankel operator $H_V$ is invariant under the flow of the cubic Szeg\H{o} equation. Thus the quantity $\mathrm{Tr}|H_v|$ is conserved. A theorem of Peller ($[\mathbf{\ref{Peller Hankel operators of class Sp}}]$ Theorem 2 p. 454) states that 
\begin{equation*}
\|V\|_{B^1_{1,1}} \simeq \mathrm{Tr}|H_V|.
\end{equation*}Using the embedding theorem $H^s \hookrightarrow B^1_{1,1} \hookrightarrow L^{\infty} $, for any $s >1$, we have the following $L^{\infty}$ estimate of the Szeg\H{o} flow.
\begin{cor}(G\'erard--Grellier $[\mathbf{\ref{gerardgrellier1}}]$)\label{L infinity estimate}
Assume $V_0 \in H^s_+$ for some $s>1$, then we have 
\begin{equation*}
\sup_{t \in \mathbb{R}}\|V(t)\|_{L^{\infty}} \lesssim_s \|V_0\|_{H^s}
\end{equation*}
\end{cor}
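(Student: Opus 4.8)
The plan is to read this off from the Lax pair as a short chain of estimates, namely to prove that for every $t\in\mathbb{R}$,
\[
\|V(t)\|_{L^\infty} \lesssim \|V(t)\|_{B^1_{1,1}} \simeq \mathrm{Tr}|H_{V(t)}| = \mathrm{Tr}|H_{V_0}| \simeq \|V_0\|_{B^1_{1,1}} \lesssim_s \|V_0\|_{H^s},
\]
and then to take the supremum over $t\in\mathbb{R}$. Here the two $\simeq$ signs are Peller's theorem ($[\mathbf{\ref{Peller Hankel operators of class Sp}}]$, as quoted above), the first $\lesssim$ and the last $\lesssim_s$ are the embeddings $H^s(\mathbb{S}^1)\hookrightarrow B^1_{1,1}(\mathbb{S}^1)\hookrightarrow L^\infty(\mathbb{S}^1)$ valid for $s>1$, and the equality in the middle is the conservation of the trace norm of the Hankel operator along the Szeg\H{o} flow. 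Since $s>1$ forces $V_0\in B^1_{1,1}$, Peller's theorem also guarantees that $H_{V_0}$ (hence every Hankel operator appearing in the chain) is trace class, so all the quantities above are finite.

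The one point that I would justify carefully is the middle equality. Given $V\in C(\mathbb{R};H^s_+)$ with $s>1$, we have $|V(t)|^2\in L^\infty(\mathbb{S}^1)$ and $H_{V(t)}$ Hilbert--Schmidt, so $B_{V(t)}=\tfrac{i}{2}H_{V(t)}^2-iT_{|V(t)|^2}$ is a norm-continuous family of bounded skew-adjoint operators on $L^2_+$. Hence the linear equation $\dot U(t)=B_{V(t)}U(t)$, $U(0)=\mathrm{Id}_{L^2_+}$, has a unique solution, which is a family of unitary operators on $L^2_+$ by skew-adjointness of $B_{V(t)}$. Differentiating $U(t)^*H_{V(t)}U(t)$ and inserting the Lax equation $(\ref{Lax equation for Szego})$ together with $\dot U(t)^*=-U(t)^*B_{V(t)}$ gives $\frac{d}{dt}\big(U(t)^*H_{V(t)}U(t)\big)=0$, so $H_{V(t)}=U(t)H_{V_0}U(t)^*$. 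Unitary conjugation does not change the singular values, whence $\mathrm{Tr}|H_{V(t)}|=\mathrm{Tr}|H_{V_0}|$ for all $t$. Equivalently, one may simply invoke the already-recorded fact that $\mathrm{Tr}|H_V|$ is a conserved quantity of the cubic Szeg\H{o} equation, in which case the corollary is immediate.

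Because $B_{V(t)}$ is bounded and generates a genuine unitary evolution, there is no serious obstacle in the argument; the only slightly delicate bookkeeping is to ensure that all operators involved are trace class so that $\mathrm{Tr}|\cdot|$ is well defined throughout, which is exactly what $s>1$ and Peller's theorem provide. If one wishes to avoid even this, it suffices to establish the trace-norm conservation first for $V_0\in C^\infty_+$ — where $H_{V(t)}$ is trace class and the computation is smooth in $t$ — and then pass to general $V_0\in H^s_+$ by density, using the continuity of the Szeg\H{o} flow on $H^s_+$ and the fact that $W\mapsto\mathrm{Tr}|H_W|$ is Lipschitz on bounded subsets of $B^1_{1,1}$, hence on bounded subsets of $H^s$. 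In short, the content of the statement reduces to Peller's characterization of $B^1_{1,1}$ and the Besov embedding, both classical and cited above.
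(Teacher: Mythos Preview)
Your proof is correct and follows exactly the same approach as the paper: the embeddings $H^s\hookrightarrow B^1_{1,1}\hookrightarrow L^\infty$ for $s>1$, Peller's equivalence $\|V\|_{B^1_{1,1}}\simeq\mathrm{Tr}|H_V|$, and the conservation of $\mathrm{Tr}|H_V|$ coming from the Lax pair. The paper records this in one line before the corollary; your version simply makes explicit the unitary-conjugation argument behind the spectral invariance and the trace-class bookkeeping, which the paper leaves implicit.
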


\subsection{Wave turbulence}
The following theorem indicates its chaotic long time behavior with turbulence phenomenon for general initial data.
\begin{theorem}(G\'erard--Grellier $[\mathbf{\ref{gerardgrellier growth of sobolev norm}},\mathbf{\ref{Gerard grellier book cubic szego equation and hankel operators}}]$)\label{Chaotic long time behaviour for cubic szego equation}
1.There exists a $G_{\delta}-$dense set $U \subset C^{\infty}(\mathbb{S}^1)\bigcap L^2_+$ such that if $V_0 \in U$, then there exist two sequences $(t_n)_{n\in \mathbb{N}}$ and $(t_n')_{n\in \mathbb{N}}$ tending to infinity such that
\begin{equation*}
\begin{cases}
\lim_{n \to +\infty} \frac{\|V(t_n)\|_{H^s}}{|t_n|^p}=+\infty, \qquad \forall s > \frac{1}{2},\quad\forall p \geq 1,\\
\lim_{n \to +\infty}V(t_n') = V_0.\\
\end{cases}
\end{equation*}\\
\noindent 2.For every $V_0 \in H^{\frac{1}{2}}_+$, the mapping $t \in \mathbb{R} \mapsto V(t) \in H^{\frac{1}{2}}_+$ is almost periodic. 
\end{theorem}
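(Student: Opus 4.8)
The whole statement rests on the complete integrability of $(\ref{cubic szego equation appendix})$, of which the Lax equation $(\ref{Lax equation for Szego})$ is only the first layer. I would add a second Lax pair for the \emph{shifted} Hankel operator $K_V:=H_VS$, where $S:h\mapsto e^{ix}h$ is the shift on $L^2_+$, namely $\partial_tK_V=[C_V,K_V]$ with $C_V:=\tfrac i2K_V^2-iT_{|V|^2}$; since $B_V$ and $C_V$ are skew-adjoint, $H_{V(t)}$ and $K_{V(t)}$ stay unitarily equivalent to $H_{V_0}$ and $K_{V_0}$, so the joint spectrum of the commuting positive operators $H_V^2,K_V^2$ is a constant of motion. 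Because the matrix of $H_V$ in the Fourier basis is the Hankel matrix $(\widehat V(m+n))_{m,n\ge0}$, one has $\operatorname{Tr}(H_V^2)=\sum_{k\ge0}(k+1)|\widehat V(k)|^2\simeq\|V\|_{H^{1/2}}^2$; thus the conserved quantity $\operatorname{Tr}(H_V^2)$ already gives $\sup_{t}\|V(t)\|_{H^{1/2}}<\infty$ for $V_0\in H^{1/2}_+$ (while $\operatorname{Tr}|H_V|$, also conserved, feeds Peller's theorem and Corollary \ref{L infinity estimate}). The key nonperturbative ingredient I would develop is the G\'erard--Grellier explicit formula, reconstructing $V(t)$ from $V_0$, $e^{-itH_{V_0}^2}$ and $e^{itK_{V_0}^2}$; equivalently, it realizes the inverse spectral map as a homeomorphism from the conserved action data together with linearly evolving angle coordinates onto $H^{1/2}_+$, and onto the finite-dimensional submanifolds $\mathcal M(N)$ of rational functions of degree $N$, which are invariant and on which the flow is complete and explicit.

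\noindent \emph{Part 1.} The core is a norm-inflation mechanism on the manifolds $\mathcal M(N)$: using the explicit formula there I would produce, for every $R$, every $\tau$ and every $\eta>0$, a rational datum $W_0\in\mathcal M(N)$ --- morally a small perturbation of a configuration with a near-double singular value of $H_{W_0}$, i.e. a near-resonance --- whose solution satisfies $\|W(t_\ast)\|_{H^{1/2+\eta}}>R$ for some $|t_\ast|>\tau$, while quasi-periodicity on $\mathcal M(N)$ keeps the orbit returning close to $W_0$ in every Sobolev norm. I would then run a Baire argument in the Fr\'echet space $C^\infty_+$: for $n\ge1$ put
\begin{equation*}
U_n:=\Bigl\{V_0\in C^\infty_+\ :\ \exists\, t,\ |t|>n,\ \|V(t)\|_{H^{1/2+1/n}}>n\,|t|^{n};\quad \exists\, t',\ |t'|>n,\ \|V(t')-V_0\|_{H^{n}}<\tfrac1n\Bigr\}.
\end{equation*}
Each $U_n$ is open by continuity of the flow (Proposition \ref{GWP for s bigger than 1/2}), and dense by approximating an arbitrary datum in $C^\infty_+$ by a finite-rank one and perturbing it inside the relevant $\mathcal M(N)$ to trigger the inflation mechanism while retaining recurrence; hence $U:=\bigcap_nU_n$ is $G_\delta$-dense in $C^\infty_+$. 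For $V_0\in U$, picking $t_n$ with $|t_n|>n$ and $\|V(t_n)\|_{H^{1/2+1/n}}>n|t_n|^{n}$, one gets for every fixed $s>\tfrac12$, $p\ge1$ and every $n\ge\max(p,(s-\tfrac12)^{-1})$ that $\|V(t_n)\|_{H^s}\ge\|V(t_n)\|_{H^{1/2+1/n}}>n\,|t_n|^{p}$, so $\|V(t_n)\|_{H^s}/|t_n|^p\to+\infty$; the $t_n'$ from the second clause give $V(t_n')\to V_0$ in $C^\infty_+$.

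\noindent \emph{Part 2.} I would upgrade the boundedness above to relative compactness of the orbit $\{V(t):t\in\mathbb R\}$ in $H^{1/2}_+$: via the explicit formula and the conservation of the action variables, the high-frequency tail $\sum_{k\ge N}(k+1)|\widehat{V(t)}(k)|^2$ is controlled uniformly in $t$ by the tail of the conserved spectral sequence. On the compact orbit closure the flow is conjugate, through the spectral map, to a translation flow $t\mapsto(\theta_j+\omega_jt)_j$ on a (compact, possibly infinite-dimensional) torus of angles, hence is an isometry for the pulled-back metric; therefore $t\mapsto V(t)$ is the composition of a Bohr almost periodic torus-valued map with a continuous map and is almost periodic with values in $H^{1/2}_+$ by Bochner's criterion. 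As a byproduct, for every $V_0\in H^{1/2}_+$ the relatively dense set of almost-periods furnishes $t_n'\to\infty$ with $V(t_n')\to V_0$ in $H^{1/2}_+$ --- which is another way to obtain the recurrence sequence of part 1, now valid for all data, not just the generic ones.

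\noindent \emph{Main obstacle.} Neither the Lax pairs nor Peller's theorem suffices on its own: the Lax pairs give only boundedness in $H^{1/2}$ and invariance of the spectrum, and Peller's theorem controls only the $B^1_{1,1}$-norm, which neither dominates $H^{1/2}$ nor can produce growth. The relative compactness in part 2 and the quantitative norm inflation along degenerating orbits in part 1 both genuinely require the G\'erard--Grellier explicit formula together with a precise analysis of the inverse spectral map --- its continuity into $H^{1/2}_+$ and the exact dependence of the Fourier tail on the conserved spectral data. Establishing that formula from the two Lax pairs and the generating function $z\mapsto\langle(I-zS^\ast)^{-1}V,\mathbf 1\rangle$ is the technical heart; once it is in hand, part 2 becomes soft and part 1 reduces to the Baire bookkeeping above.
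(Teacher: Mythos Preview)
The paper does not prove this theorem: it is stated as a quotation of results of G\'erard and Grellier (references~$[\mathbf{\ref{gerardgrellier growth of sobolev norm}}]$ and~$[\mathbf{\ref{Gerard grellier book cubic szego equation and hankel operators}}]$) and used only as background to motivate the study of~$(\ref{NLS-Szego epsilon^alpha})$. There is therefore nothing in the paper to compare your argument against.

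That said, your outline is broadly faithful to the actual G\'erard--Grellier strategy: the pair of Lax equations for $H_V$ and the shifted operator $K_V$, the action--angle picture and the explicit formula, the Baire argument fed by a norm-inflation mechanism on the rational manifolds $\mathcal M(N)$, and almost periodicity via conjugation to a torus flow are all genuine ingredients of their proofs. Two remarks. First, your definition of $U_n$ packages both the growth and the recurrence in a single open set; the openness of the recurrence clause relies on continuity of the flow in $C^\infty_+$ at the (large) time $t'$, which is fine but worth stating since Proposition~\ref{GWP for s bigger than 1/2} is formulated for fixed finite time windows. Second, and more substantively, the hard step you correctly flag as the ``main obstacle'' --- constructing, for arbitrary $R$, $\tau$, $\eta$, a rational datum near a given one whose orbit inflates past $R|t|^n$ in $H^{1/2+\eta}$ --- is not just a perturbation of a near-double singular value; in the G\'erard--Grellier papers it requires a rather delicate analysis of the explicit formula and of how Sobolev norms degenerate as the spectral data approach the boundary of the action domain. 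Your sketch gestures at this but does not supply the mechanism; that is the genuine gap between your proposal and an actual proof.
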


\subsection{A special case}
\noindent In order to prove the optimality of the case $\alpha>2$ of $\mathbf{Theorem}$ $\mathbf{\ref{orbital stability epsilon^(4-alpha) estimates}}$, we compare the solution $u$ of the NLS-Szeg\H{o} with small dispersion to the solution of the cubic Szeg\H{o} equation with some special initial data. Set $V_0=V^{\delta}_0:=\delta +e^{ix}$, we denote $V^{\delta}$ the solution of $(\ref{cubic szego equation appendix})$. Refering to G\'erard--Grellier $[\mathbf{\ref{gerardgrellier1}}$ Sect. 6.1, 6.2; $\mathbf{\ref{gerardgrellier effective dynamic}}$ Sect. 3; $\mathbf{\ref{Gerard-grellier explicit formula szego equation}}$ Sect. $4]$, we have the following explicit formula
\begin{equation}\label{fractional explicit formula of cubic szego equation daisy effect}
V^{\delta}(t,x)= \frac{a^{\delta}(t)e^{ix}+b^{\delta}(t)}{1-p^{\delta}(t)e^{ix}},
\end{equation}where
\begin{equation*}
\begin{split}
& a^{\delta}(t)=e^{-it(1+\delta^2)}, \quad b^{\delta}(t)=e^{-it(1+\frac{\delta^2}{2})}(\delta\cos(\omega t)-i\frac{2+\delta^2}{\sqrt{4+\delta^2}}\sin(\omega t)),\\
& p^{\delta}(t)=-\frac{2i}{\sqrt{4+\delta^2}}\sin(\omega t)e^{\frac{-it\delta^2}{2}}, \quad \omega=\delta \sqrt{1+\frac{\delta^2}{4}}.
\end{split}
\end{equation*}
\begin{prop}(G\'erard--Grellier $[\mathbf{\ref{gerardgrellier1}},\mathbf{\ref{gerardgrellier effective dynamic}},\mathbf{\ref{Gerard-grellier explicit formula szego equation}}]$)\label{daisy effect}
For $0< \delta \ll 1$, set $t^{\delta}:=\frac{\pi}{2 \omega}=\frac{\pi}{\delta \sqrt{4+\delta^2}} \sim \frac{\pi}{2\delta}$. Let $V^{\delta}$ be the solution of $(\ref{cubic szego equation appendix})$ with $V^{\delta}(0,x)=e^{ix}+\delta$, then we have the following estimate 
\begin{equation*}
\|V^{\delta}(t)\|_{H^s}\lesssim_s \|V^{\delta}(t^{\delta})\|_{H^s} \simeq_s \frac{1}{ \delta^{2s-1}}, \qquad \forall t \in [0, t^{\delta}].
\end{equation*}for every $s> \frac{1}{2}$.
\end{prop}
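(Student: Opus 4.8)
The plan is to exploit the explicit formula $(\ref{fractional explicit formula of cubic szego equation daisy effect})$, which writes $V^{\delta}(t)$ as a rational function of $e^{ix}$ of the special form
\[
f=\frac{Ae^{ix}+B}{1-Pe^{ix}},\qquad |P|<1,
\]
and to compute $\|f\|_{H^{s}}$ directly. Expanding $\frac{1}{1-Pe^{ix}}=\sum_{n\ge 0}P^{n}e^{inx}$ gives the Taylor coefficients $\widehat{f}(0)=B$ and $\widehat{f}(n)=(A+BP)P^{n-1}$ for $n\ge 1$, hence
\[
\|f\|_{H^{s}}^{2}\;\simeq_{s}\;|B|^{2}+|A+BP|^{2}\sum_{n\ge 1}n^{2s}|P|^{2(n-1)}\;\simeq_{s}\;|B|^{2}+\frac{|A+BP|^{2}}{(1-|P|^{2})^{2s+1}},
\]
where the last equivalence is the elementary two-sided bound $\sum_{n\ge 1}n^{2s}r^{n-1}\simeq_{s}(1-r)^{-(2s+1)}$, valid for all $r\in[0,1)$ (for the lower bound one restricts the sum to $n\le (1-r)^{-1}$). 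Thus the problem reduces to tracking, along the orbit and as $\delta\to 0^{+}$, the spectral gap $1-|P|^{2}$ and the numerator residue $|A+BP|$.

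I would then substitute $A=a^{\delta}(t)$, $B=b^{\delta}(t)$, $P=p^{\delta}(t)$ and carry out the two key computations. Writing $c:=\cos(\omega t)$, one gets
\[
1-|p^{\delta}(t)|^{2}=\frac{4c^{2}+\delta^{2}}{4+\delta^{2}},
\]
and, using $|a^{\delta}(t)|=1$, a cancellation in the residue:
\[
a^{\delta}(t)+b^{\delta}(t)p^{\delta}(t)=e^{-it(1+\delta^{2})}\left(\frac{(4+2\delta^{2})c^{2}-\delta^{2}}{4+\delta^{2}}-\frac{2i\delta c\sin(\omega t)}{\sqrt{4+\delta^{2}}}\right).
\]
Estimating the real part by $\frac{3}{2}(1-|p^{\delta}(t)|^{2})$ and, via the AM--GM inequality $4\delta|c|\le 4c^{2}+\delta^{2}$, the imaginary part also by a multiple of $1-|p^{\delta}(t)|^{2}$, one reaches the crucial bound $|a^{\delta}(t)+b^{\delta}(t)p^{\delta}(t)|\lesssim 1-|p^{\delta}(t)|^{2}$. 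Combined with $|b^{\delta}(t)|\lesssim 1$ and $1-|p^{\delta}(t)|^{2}\ge \delta^{2}/(4+\delta^{2})\gtrsim \delta^{2}$, the norm formula yields, for every $s>\frac{1}{2}$ and every $t\in\mathbb{R}$,
\[
\|V^{\delta}(t)\|_{H^{s}}^{2}\;\lesssim_{s}\;1+\bigl(1-|p^{\delta}(t)|^{2}\bigr)^{-(2s-1)}\;\lesssim_{s}\;\delta^{-(4s-2)}.
\]

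Finally I would evaluate at $t=t^{\delta}=\frac{\pi}{2\omega}$, where $\cos(\omega t^{\delta})=0$ and $\sin(\omega t^{\delta})=1$, so that both quantities above become of order $\delta^{2}$: one has $1-|p^{\delta}(t^{\delta})|^{2}=\frac{\delta^{2}}{4+\delta^{2}}\simeq \delta^{2}$ and $a^{\delta}(t^{\delta})+b^{\delta}(t^{\delta})p^{\delta}(t^{\delta})=-e^{-it^{\delta}(1+\delta^{2})}\frac{\delta^{2}}{4+\delta^{2}}$, hence $|a^{\delta}(t^{\delta})+b^{\delta}(t^{\delta})p^{\delta}(t^{\delta})|\simeq \delta^{2}$. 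Feeding this into the lower bound of the norm formula gives
\[
\|V^{\delta}(t^{\delta})\|_{H^{s}}^{2}\;\gtrsim_{s}\;\frac{\delta^{4}}{(\delta^{2})^{2s+1}}=\delta^{-(4s-2)},
\]
so $\|V^{\delta}(t^{\delta})\|_{H^{s}}\simeq_{s}\delta^{-(2s-1)}$; together with the uniform upper bound of the previous step this gives $\|V^{\delta}(t)\|_{H^{s}}\lesssim_{s}\|V^{\delta}(t^{\delta})\|_{H^{s}}$, and the monotonicity of $t\mapsto 1-|p^{\delta}(t)|^{2}=\frac{4\cos^{2}(\omega t)+\delta^{2}}{4+\delta^{2}}$ on $[0,t^{\delta}]$ explains why the maximum of the Sobolev norm over $[0,t^{\delta}]$ is attained, up to constants, at the endpoint. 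I expect the main obstacle to be the careful accounting of competing powers of $\delta$: at $t^{\delta}$ both the residue $|A+BP|$ and the gap $1-|P|^{2}$ are of size $\simeq \delta^{2}$, and it is precisely the cancellation $|A+BP|\lesssim 1-|P|^{2}$ — rather than the trivial $|A+BP|\lesssim 1$ — that keeps the growth at $\delta^{-(2s-1)}$ instead of $\delta^{-(2s+1)}$; one must also check that the two-sided estimate for $\sum n^{2s}r^{n-1}$ is applied with $\delta$-uniform constants, which is legitimate because $|p^{\delta}(t^{\delta})|^{2}=\frac{4}{4+\delta^{2}}$ stays bounded away from $0$ and $1$ for $0<\delta\ll 1$.
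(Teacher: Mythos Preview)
Your proof is correct and begins the same way as the paper's: expand the rational function $(\ref{fractional explicit formula of cubic szego equation daisy effect})$ in Fourier series to get $\|V^{\delta}(t)\|_{H^{s}}^{2}\simeq_{s}|b^{\delta}|^{2}+|a^{\delta}+b^{\delta}p^{\delta}|^{2}(1-|p^{\delta}|^{2})^{-(2s+1)}$. The point of departure is in how you control the residue $|a^{\delta}+b^{\delta}p^{\delta}|$. The paper observes that
\[
\frac{|a^{\delta}(t)+b^{\delta}(t)p^{\delta}(t)|^{2}}{(1-|p^{\delta}(t)|^{2})^{2}}=\|V^{\delta}(t)\|_{\dot H^{1/2}}^{2}=\|V^{\delta}(0)\|_{\dot H^{1/2}}^{2}=1,
\]
so the residue equals $1-|p^{\delta}|^{2}$ \emph{exactly}, and the whole proposition reduces in one line to $\|V^{\delta}(t)\|_{H^{s}}^{2}\simeq_{s}(1-|p^{\delta}(t)|^{2})^{-(2s-1)}$. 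You instead compute $a^{\delta}+b^{\delta}p^{\delta}$ by hand and use AM--GM to obtain $|a^{\delta}+b^{\delta}p^{\delta}|\lesssim 1-|p^{\delta}|^{2}$, then match the lower bound by direct evaluation at $t^{\delta}$. Both arguments work; the paper's is a one-line application of the conserved momentum, while yours is more laborious but entirely self-contained and does not appeal to any conservation law.

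One small slip in your final remark: you write that $|p^{\delta}(t^{\delta})|^{2}=\frac{4}{4+\delta^{2}}$ ``stays bounded away from $0$ and $1$ for $0<\delta\ll 1$'', but in fact it tends to $1$ as $\delta\to 0^{+}$. This does not damage the argument, because the two-sided bound $\sum_{n\ge 1}n^{2s}r^{n-1}\simeq_{s}(1-r)^{-(2s+1)}$ already holds with constants depending only on $s$, uniformly for all $r\in[0,1)$; no restriction on $r$ is needed.
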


\begin{proof}
Expanding formula $(\ref{fractional explicit formula of cubic szego equation daisy effect})$ as Fourier series, we have
\begin{equation*}
\begin{split}
\|V^{\delta}(t)\|_{H^s}^2 \simeq_s \frac{|a^{\delta}(t)+b^{\delta}(t)p^{\delta}(t)|^2}{(1-|p^{\delta}(t)|^2)^{2s+1}} = \frac{\|V^{\delta}(t)\|_{\dot{H}^{\frac{1}{2}}}^2}{(1-|p^{\delta}(t)|^2)^{2s-1}}
\end{split}
\end{equation*}with
\begin{equation*}
\|V^{\delta}(t)\|_{\dot{H}^{\frac{1}{2}}}^2 = \frac{|a^{\delta}(t)+b^{\delta}(t)p^{\delta}(t)|^2}{(1-|p^{\delta}(t)|^2)^2}=\|V^{\delta}(0)\|_{\dot{H}^{\frac{1}{2}}}^2 =1.
\end{equation*}By the explicit formula of $p^{\delta}$, we have 
\begin{equation*}
\|V^{\delta}(t)\|_{H^s}^2 \simeq_s \left(\frac{4+\delta^2}{4\cos^2(\omega t)+\delta^2}\right)^{2s-1}  \leq \|V^{\delta}(t^{\delta})\|_{H^s}^2 \simeq \frac{C_s}{\delta^{4s-2}}
\end{equation*}with $t^{\delta}:=\frac{\pi}{2 \omega}=\frac{\pi}{\delta \sqrt{4+\delta^2}} $.

\end{proof}

\section{Long time behavior for small data}\label{section small data}
\subsection{The case $\alpha \geq 2$}\label{proof of case alpha >= 2}
For all $s> \frac{1}{2}$, consider the NLS-Szeg\H{o} equation with small dispersion and small data.
\begin{equation}\label{NLS Szego small initial data epsilon alpha}
i \partial_t u + \epsilon^{\alpha}\partial_x^2 u = \Pi(|u |^2 u ), \qquad \|u(0)\|_{H^s}=\epsilon,  \qquad 0<\epsilon<1,  \qquad \alpha\geq 0.
\end{equation}
\noindent At first, we give the proof of the time interval $I^{\alpha}_{\epsilon}=[-\frac{a_s}{\epsilon^2}, \frac{a_s}{\epsilon^2}]$ in the case $\alpha \geq 2$ of $\mathbf{Theorem}$ $\mathbf{\ref{orbital stability epsilon^(4-alpha) estimates}}$, which is based on a bootstrap argument. Then we prove the maximality of $I^{\alpha}_{\epsilon} $.
\subsubsection{The bootstrap argument}
\begin{lem}\label{bootstrap}
Let $a, b, T>0$, $m >1$ and $M: [0,T]\longrightarrow \mathbb{R}_+$ be a continuous function satisfying 
\begin{equation*}
M(\tau) \leq a+bM(\tau)^m, \qquad \mathrm{for} \qquad \mathrm{all}\qquad \tau \in [0,T]
\end{equation*}Assume that $(mb)^{\frac{1}{m-1}} M(0) \leq 1$ and $(mb)^{\frac{1}{m-1}} a \leq \frac{m-1}{m}$. Then 
\begin{equation*}
M(\tau) \leq \frac{m}{m-1}a
\end{equation*}for all $\tau \in [0,T]$.
\end{lem}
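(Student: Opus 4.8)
The plan is to prove this bootstrap lemma by a continuity argument: the hypothesis controls $M$ at $\tau = 0$, and the functional inequality $M(\tau) \leq a + b M(\tau)^m$ together with continuity propagates this control to all of $[0,T]$. Concretely, introduce the set $A := \{\tau \in [0,T] : M(\tau) \leq \tfrac{m}{m-1} a\}$, which is nonempty (since $(mb)^{1/(m-1)} a \leq \tfrac{m-1}{m}$ forces $(mb)^{1/(m-1)} \tfrac{m}{m-1} a \leq 1$, and combined with $(mb)^{1/(m-1)} M(0) \leq 1$ one checks $M(0) \leq \tfrac{m}{m-1} a$, using that $a \leq \tfrac{m}{m-1} a$) and closed (by continuity of $M$). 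The goal is to show $A$ is also open in $[0,T]$, whence $A = [0,T]$ by connectedness.

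The key step is the openness, and this is where the strict-inequality gap hidden in the hypotheses is used. The plan is to show that on $A$ one actually has the \emph{strict} bound $M(\tau) < \tfrac{m}{m-1} a$ unless $a = 0$ (the case $a=0$ being trivial since then $M \equiv 0$). Suppose $\tau \in A$. Set $f(x) = a + b x^m - x$ on $[0,\infty)$. One computes $f(0) = a > 0$, and $f$ has a unique interior critical point at $x_* = (mb)^{-1/(m-1)}$ where $f'(x_*) = 0$; for $x < x_*$ we have $f' < 0$. The hypothesis $(mb)^{1/(m-1)} a \leq \tfrac{m-1}{m}$ rearranges to $\tfrac{m}{m-1} a \leq x_*$, so on the interval $[0, \tfrac{m}{m-1} a]$ the function $f$ is strictly decreasing. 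Hence on that interval the inequality $f(x) \leq 0$, i.e. $x \leq a + b x^m$, can hold only for $x$ in a sub-interval $[c, \tfrac{m}{m-1} a]$ where $c$ is the unique zero of $f$ in $[0, x_*]$; but actually I want the reverse — that the constraint $M(\tau) \leq a + b M(\tau)^m$ \emph{forces} $M(\tau)$ small. Let me instead argue directly: since $f$ is strictly decreasing on $[0, \tfrac{m}{m-1}a]$ and $f(a) = b a^m > 0$ while we need $f(M(\tau)) \geq 0$ — wait, the inequality gives $f(M(\tau)) \geq 0$ only if rearranged as $a + bM^m - M \geq 0$, which is always implied, so that's vacuous. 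The right rearrangement: the bound $M \leq a + bM^m$ is $g(M) := M - bM^m \leq a$. On $[0, x_*]$, $g$ is increasing (since $g'(x) = 1 - mbx^{m-1} \geq 0$ there), and $g(\tfrac{m}{m-1}a) = \tfrac{m}{m-1}a - b(\tfrac{m}{m-1}a)^m$. Using $\tfrac{m}{m-1} a \leq x_* = (mb)^{-1/(m-1)}$, one estimates $b(\tfrac{m}{m-1}a)^{m} \leq b (\tfrac{m}{m-1}a) x_*^{m-1} = \tfrac{1}{m}\cdot\tfrac{m}{m-1}a = \tfrac{a}{m-1}$, so $g(\tfrac{m}{m-1}a) \geq \tfrac{m}{m-1}a - \tfrac{a}{m-1} = a$. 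Since $g$ is increasing on $[0,x_*] \supseteq [0, \tfrac{m}{m-1}a]$ and $g(M(\tau)) \leq a \leq g(\tfrac{m}{m-1}a)$, we conclude $M(\tau) \leq \tfrac{m}{m-1}a$ — reproving closure — and moreover if $a>0$ strict monotonicity gives $M(\tau) < \tfrac{m}{m-1}a$ whenever $g(M(\tau)) < a$; and $g(M(\tau)) \leq a < g(\tfrac{m}{m-1}a)$ is strict when $b(\tfrac{m}{m-1}a)^m > \tfrac{a}{m-1}$...

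To avoid this case-chasing in the write-up, the cleanest route is: first dispose of $a = 0$ (trivial). For $a > 0$, run the connectedness argument with $A := \{\tau : M(\tau) \leq \tfrac{m}{m-1}a\}$; nonempty and closed as above. For openness at $\tau_0 \in A$: if $M(\tau_0) < \tfrac{m}{m-1}a$ strictly, continuity gives a neighborhood in $A$ immediately; if $M(\tau_0) = \tfrac{m}{m-1}a$, plug into the hypothesis to get $\tfrac{m}{m-1}a \leq a + b(\tfrac{m}{m-1}a)^m$, i.e. $\tfrac{a}{m-1} \leq b(\tfrac{m}{m-1}a)^m$, which rearranges to $(mb)^{1/(m-1)} a \geq \tfrac{m-1}{m}\cdot(\text{something} \geq 1)$... in fact it gives $(mb)^{1/(m-1)}a \geq \tfrac{m-1}{m}$ with a factor that makes it $\geq$, and combined with the hypothesis $(mb)^{1/(m-1)}a \leq \tfrac{m-1}{m}$ forces equality throughout — a degenerate boundary case that can either be excluded by a limiting/$\varepsilon$-perturbation of the constant (replace $\tfrac{m}{m-1}a$ by $\tfrac{m}{m-1}a + \eta$ and let $\eta \to 0^+$), or handled by noting the equality case still yields the claimed non-strict conclusion directly. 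The main obstacle, then, is purely this boundary equality case in the openness step; everything else is routine calculus with the function $g(x) = x - bx^m$ and its monotonicity on $[0, (mb)^{-1/(m-1)}]$. I would present the $\eta$-relaxation to make the continuity argument go through cleanly, then take $\eta \downarrow 0$ at the end.
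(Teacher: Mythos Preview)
Your approach is correct and rests on the same function as the paper: $g(x)=x-bx^m$, increasing on $[0,z_c]$ with $z_c=(mb)^{-1/(m-1)}$. The paper's packaging is cleaner and avoids your open/closed mechanics and the $\eta$-relaxation: since $a\le g(z_c)$, the sublevel set $\{z\ge 0: g(z)\le a\}$ splits as $[0,z_-]\cup[z_+,\infty)$; continuity of $M$ together with $M(0)\le z_c$ then traps $M([0,T])$ in the lower component $[0,z_-]$; finally, concavity of $g$ on $[0,\infty)$ gives $g(z)\ge\frac{g(z_c)}{z_c}z=\frac{m-1}{m}z$ on $[0,z_c]$, whence $a=g(z_-)\ge\frac{m-1}{m}z_-$, i.e.\ $z_-\le\frac{m}{m-1}a$.

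Your instinct that the boundary case $(mb)^{1/(m-1)}a=\frac{m-1}{m}$ is delicate is exactly right---in fact the lemma is \emph{false} there: with $m=2$, $b=1$, $a=\tfrac14$ one has $z_c=\tfrac12$, the inequality $M\le\tfrac14+M^2$ reduces to $(M-\tfrac12)^2\ge 0$ and is vacuous, and $M(\tau)=\tfrac12+\tau$ satisfies all hypotheses but violates the conclusion. So no $\eta$-relaxation can rescue that case; the paper's proof has the same tacit gap (in the equality case $z_-=z_+=z_c$ and the sublevel set is connected), but every application of the lemma in the paper uses the strict inequality.
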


\begin{proof}
The function $f_m: z \in \mathbb{R}_+ \longmapsto z-bz^m$ attains its maximum at the critical point $z_c=(mb)^{-\frac{1}{m-1}}$. $f_m(z_c)=\frac{m-1}{m}(mb)^{-\frac{1}{m-1}}$. Since $a \leq \max_{z\geq 0}f_m(z)=f_m(z_c)$, there exists $z_-\leq z_c \leq  z_+$ such that
\begin{equation*}
\{z \geq 0: f_m(z)\leq a\}=[0, z_-]\cup [z_+, +\infty[
\end{equation*}and $f_m(z_{\pm})=a$. Since $f_m(M(\tau)) \leq a$, $\forall 0\leq \tau \leq T$ and $M(0) \leq z_c$, we have $M([0,T]) \subset [0, z_-]$. By the concavity of $f_m$ on $[0, +\infty[$, we have $f_m(z) \geq \frac{f_m(z_c)}{z_c}z$ for all $z\in [0, z_c]$. Consequently, $M(t) \leq z_- \leq \frac{m}{m-1}a$, for all $0 \leq t \leq T$.
\end{proof}

\bigskip
\bigskip

\begin{proof}[Proof of of estimate $(\ref{epsilon^(4-alpha) estimates formula})$ in the case $\alpha >2$ ]
For all $\alpha \geq 0$ and $\epsilon \in (0,1)$ fixed, we rescale $u$ as $u=\epsilon \mu$, equation $(\ref{NLS Szego small initial data epsilon alpha} )$ becomes 
\begin{equation}\label{equation NLS-Szego alpha epsilon after rescaling}
\begin{cases}
i \partial_t \mu + \epsilon^{\alpha}\partial_x^2 \mu = \epsilon^2\Pi(|\mu|^2 \mu),\\
\|\mu(0)\|_{H^s} =1.
\end{cases}
\end{equation}Duhamel's formula of equation $(\ref{equation NLS-Szego alpha epsilon after rescaling})$ gives the following estimate:
 \begin{equation}
 \sup_{0\leq \tau \leq t} \|\mu(\tau)\|_{H^s}  \leq \|\mu (0)\|_{H^s} + C_s \epsilon^2 t \sup_{0\leq \tau \leq t} \|\mu(\tau)\|_{H^s}^3
 \end{equation}Here $C_s$ denotes the Sobolev constant in the inequality $\||\mu|^2 \mu\|_{H^s} \leq C_s \|\mu\|_{H^s}^3$. We choose $a_s=\frac{4}{27C_s}$ and the following estimate holds 
 \begin{equation}\label{estimate for alpha strickly bigger than 2}
\sup_{|t| \leq \frac{a_s}{\epsilon^{2}}} \|u(t)\|_{H^s} \leq \frac{3}{2}\epsilon, \qquad \forall \alpha \geq 0,
 \end{equation}by using $\mathbf{Lemma}$ $\mathbf{\ref{bootstrap}}$ with $m=3$, $T=\frac{a_s}{\epsilon^2}$, $a= M(0)=1$, $b= C_s \epsilon^2 T$ and $M(t)=\sup_{0\leq \tau \leq t} \|\mu(\tau)\|_{H^s}$.\\
 \end{proof}
 \bigskip
 \bigskip

\subsubsection{Optimality of the time interval if $\alpha >2$}
\noindent In order to prove the optimality of $I^{\alpha}_{\epsilon}$ in which estimate $(\ref{estimate for alpha strickly bigger than 2})$ holds, we set $u(0,x)=\epsilon(e^{ix}+\delta)$ and rescale $u(t,x)=\epsilon U(\epsilon^2 t,x)$. Then, we have
\begin{equation*}
i\partial_t U + \nu^2 \partial^2_x U = \Pi(|U|^2U), \qquad U(0,x)=e^{ix}+\delta,
\end{equation*}where $\nu:=\epsilon^{\frac{\alpha-2}{2}}$. Since the optimality is a consequence of $\mathbf{Theorem}$ $\mathbf{\ref{Transfer of energy to high frequencies for the NLS Szego equation with very small dispersion}}$, we prove at first $\mathbf{Theorem}$ $\mathbf{\ref{Transfer of energy to high frequencies for the NLS Szego equation with very small dispersion}}$ by comparing $U$ to the solution of the cubic Szeg\H{o} equation with the same initial data,
\begin{equation*}
i\partial_t V =\Pi(|V|^2 V), \qquad V(0,x)=e^{ix}+\delta.
\end{equation*}

\begin{proof}[Proof of $\mathbf{Theorem}$ $\mathbf{\ref{Transfer of energy to high frequencies for the NLS Szego equation with very small dispersion}}$]
We shall estimate their difference $r(t,x):=U(t,x)-V(t,x)$, which satisfies the following equation
\begin{equation}\label{equation of r}
i\partial_t r +\nu^2 \partial_x^2 r = -\nu^2 \partial_x^2 V + \Pi(V^2 \overline{r}+ 2|V|^2 r)+ Q(r), \qquad r(0)=0,
\end{equation}with $Q(r):=\Pi(\overline{V}r^2+2V|r|^2 + |r|^2 r)$. Thus, we can calculate the derivative of $\|r(t)\|_{H^1}^2$,
\begin{equation*}
\begin{split}
& \partial_t \|r(t)\|_{H^1}^2\\
 =& \partial_t \|r(t)\|_{L^2}^2 + \partial_t \|\partial_x r(t)\|_{L^2}^2 \\
=& 2 \mathrm{Im}\langle i\partial_t r(t), r(t) \rangle_{L^2} + 2 \mathrm{Im}\langle \partial_x (i\partial_t r(t)), \partial_x r(t) \rangle_{L^2}\\
=& 2   \mathrm{Im}\int_{\mathbb{S}^1} \nu^2\partial_x V \partial_x\overline{r} + V^2 \overline{r}^2 -\overline{V}|r|^2 r  \\ 
& \qquad +  2   \mathrm{Im}\int_{\mathbb{S}^1} -\nu^2 \partial_x^3 V \partial_x \overline{r} + V^2 (\partial_x\overline{r})^2 +2 V\partial_x V \overline{r} \partial_x \overline{r} +4  \mathrm{Re}(\overline{V}\partial_x V) r \partial_x \overline{r}\\
& \qquad +  2   \mathrm{Im}\int_{\mathbb{S}^1}\partial_x \overline{V} r^2 \partial_x \overline{r} + 2 \overline{V}r |\partial_x r|^2 + 2\partial_x V |r|^2 \partial_x \overline{r} +4V\mathrm{Re}(\overline{r}\partial_x r) \partial_x \overline{r}+r^2 (\partial_x \overline{r})^2.
\end{split}
\end{equation*}Then, we have
\begin{equation*}
\begin{split}
& \Big|\partial_t \|r(t)\|_{H^1}^2 \Big|\\
\leq & 2 \nu^2 \|\partial_x r\|_{L^2}(\|\partial_x V\|_{L^2}+\|\partial_x^3 V\|_{L^2}) + 2 \|V\|_{L^{ \infty}}^2 \| r\|_{H^1}^2 +2\|V\|_{L^{ \infty}}\|r\|_{L^{ \infty}} \| r\|_{L^2}^2\\
& \quad +12 \|V\|_{L^{ \infty}}\|r\|_{L^{ \infty}} \|\partial_x V\|_{L^2} \|\partial_x r\|_{L^2} + 6 \|r\|_{L^{ \infty}}^2 \|\partial_x V\|_{L^2} \|\partial_x r\|_{L^2}\\
& \quad +12\|V\|_{L^{ \infty}}\|r\|_{L^{ \infty}}  \|\partial_x r\|_{L^2}^2 + 2 \|r\|_{L^{ \infty}}^2 \|\partial_x r\|_{L^2}^2\\
\leq & \nu^2 (2 \|\partial_x r\|_{L^2}^2+\|\partial_x V\|_{L^2}^2+\|\partial_x^3 V\|_{L^2}^2) + 2 \|V\|_{L^{ \infty}}^2 \| r\|_{H^1}^2\\
&\quad +12 \|V\|_{L^{ \infty}} \|\partial_x V\|_{L^2} \|r\|_{L^{ \infty}}\|\partial_x r\|_{L^2} + \mathcal{O}(\|r\|_{H^1}^3).
\end{split}
\end{equation*}The $L^{\infty}$ estimate of $V$ is given by $\mathbf{Corollary}$ $\mathbf{\ref{L infinity estimate}}$ and the $H^s$ estimate of $V$ is given by $\mathbf{Proposition}$ $\mathbf{\ref{daisy effect}}$, for all $s> \frac{1}{2}$. Thus, we have
\begin{equation*}
M_{\infty}:=\sup_{0<\delta <1}\sup_{t\in \mathbb{R}}\|V(t)\|_{L^{\infty}} < +\infty,
\end{equation*}and there exist $C_1, C_3 >0$ such that
\begin{equation*}
\|\partial_x V(t)\|_{L^2}\leq \frac{C_1}{\delta}, \qquad \|\partial_x^3 V(t)\|_{L^2}\leq \frac{C_3}{\delta^5},
\end{equation*}for all $0\leq t \leq t^{\delta}=\frac{\pi}{\delta \sqrt{4+\delta^2}}$. We use a bootstrap argument to deal with the term $\mathcal{O}(\|r\|_{H^1}^3)$. Set
\begin{equation*}
T:=\sup\{t>0: \sup_{0\leq \tau\leq t}\|r(\tau)\|_{H^1}\leq 1\},
\end{equation*}then we have
\begin{equation*}
\sup_{0\leq t\leq T} \|r(t)\|_{L^{\infty}}\leq C\sup_{0\leq t\leq T}\|r(t)\|_{H^1} \leq C,
\end{equation*}where $C$ denotes the Sobolev constant. Consequently, for all $0 \leq t \leq \min(T, t^{\delta})$, we have
\begin{equation*}
\begin{split}
& \Big|\partial_t \|r(t)\|_{H^1}^2 \Big|\\
\leq & \nu^2 (\|\partial_x V\|_{L^2}^2+\|\partial_x^3 V\|_{L^2}^2)\\
& +  \| r\|_{H^1}^2(2 \nu^2 + 2 \|V\|_{L^{ \infty}}^2+ 12C\|V\|_{L^{\infty}}\|\partial_x V\|_{L^2} + 6C\|r\|_{L^{\infty}}\|\partial_x V\|_{L^2}+12 \|V\|_{L^{\infty}}\|r\|_{L^{ \infty}}+ 2\|r\|_{L^{\infty}}^2) \\
\leq & \nu^2 (\frac{C_1^2}{\delta^2}+\frac{C_3^2}{\delta^{10}})+(2 +2 M_{\infty}^2 +12CM_{\infty} +2C^2 +(12C M_{\infty} + 6C^2)\frac{C_1}{\delta})\| r\|_{H^1}^2\\
\leq & K(\frac{\nu^2}{\delta^{10}}+\frac{\|r(t)\|_{H^1}^2}{\delta}),  \qquad \forall 0 < \delta, \nu <1,
\end{split}
\end{equation*}with $K:=\max(C_1^2+C_3^2, 2 +2 M_{\infty}^2 +12CM_{\infty} +2C^2 +(12C M_{\infty} + 6C^2)C_1)$. We set
\begin{equation*}
\nu=\epsilon^{\frac{\alpha-2}{2}}=e^{- \frac{\pi K}{2\delta^2}} \Longleftrightarrow \delta = \sqrt{\tfrac{\pi K}{(\alpha-2)|\ln\epsilon|}}.
\end{equation*}Using Gr\"onwall's inequality, we deduce that
\begin{equation*}
\|r(t)\|_{H^1}^2 \leq \frac{\nu^2}{\delta^9}e^{\frac{\pi K}{2 \delta^2}} = \delta^{-9}e^{-\frac{\pi K}{2 \delta^2}}\ll 1 \ll \delta^{-2}, \qquad \forall 0 \leq t \leq t^{\delta}, \quad \forall 0 <\delta \ll 1.
\end{equation*}Since $\|V(t^{\delta})\|_{H^1} \simeq \frac{1}{\delta}$ by $\mathbf{Theorem}$ $\mathbf{\ref{daisy effect}}$, we have $\|U(t^{\delta})\|_{H^1}= \|V(t^{\delta})+r(t^{\delta})\|_{H^1}\simeq \frac{1}{\delta}$.

\end{proof}

\noindent Fix $\alpha >2$, for every $0<\epsilon \ll 1$, we set \begin{equation*}
\delta=\delta^{\alpha,\epsilon} := \sqrt{\tfrac{\pi K}{(\alpha-2)|\ln\epsilon|}}\ll 1, \qquad\qquad T^{\alpha, \epsilon}:= \tfrac{t^{\delta}}{\epsilon^2}=\tfrac{\pi}{2\epsilon^2 \delta \sqrt{1+\frac{\delta^2}{4}}}\simeq \tfrac{\sqrt{(\alpha-2)|\ln\epsilon|}}{\epsilon^2}.
\end{equation*}Then we have $\|u(T^{\alpha,\epsilon})\|_{H^1} \simeq \epsilon \sqrt{(\alpha-2)| \ln \epsilon|} \gg \epsilon$, while $u(0,x)=\epsilon(e^{ix}+\delta)$. Then the optimality of $I^{\alpha}_{\epsilon}=[-\frac{a_s}{\epsilon^2},\frac{a_s}{\epsilon^2}]$ is obtained.

\subsection{The case $0\leq \alpha < 2$}\label{proof of orbital stability theorem for alpha <2}
We assume at first that $u(0) \in C^{\infty}_+$ so that the energy functional of $(\ref{NLS Szego small initial data epsilon alpha})$ 
\begin{equation*}
E^{\alpha,\epsilon}(u)= \frac{\epsilon^{\alpha}}{2} \|\partial_x u\|_{L^2}^2 + \frac{1}{4}\|u\|_{L^4}^4
\end{equation*}
is well defined. For general initial data $u(0) \in H^s_+$, if $s\in (\frac{1}{2},1)$, we use the density argument $\overline{C^{\infty}_+}=H^s_+$.\\

\noindent We rescale $u(t,x) \mapsto \epsilon^{-\frac{\alpha}{2}} u (-\epsilon^{\alpha}t,x)$, then the equation $(\ref{NLS Szego small initial data epsilon alpha})$ is reduced to the case $\alpha=0$. It suffices to prove the following estimate 
\begin{equation*}
\sup_{|t|\leq \frac{a_s}{\epsilon^4}}\|u(t)\|_{H^s}=\mathcal{O}(\epsilon)
\end{equation*}if $u$ solves $ i \partial_t u + \partial_x^2 u = \Pi(|u|^2 u)$ with $\|u(0)\|_{H^s}=\epsilon$.
\subsubsection{Identifying the resonances}
The study of the resonant set of the NLS-Szeg\H{o} equation is necessary before Birkhoff normal form transformation. We refer to Eliasson--Kuksin $[\mathbf{\ref{eliasson-kuksin kam for nls}}]$ to see the analysis of the resonances for a more general non linear term and KAM theorem for the NLS equation. \\

\noindent We use again the change of variable $u=\epsilon \mu$ and we rewrite Duhamel's formula of $\mu$ with $\eta(t)= \sum_{k\geq 0} \eta_k (t) e^{ikx}:= e^{-it \partial_x^2} \mu(t)$. Then we have
\begin{equation*}
\eta_k(t) = \mu_0(t) -i\epsilon^2 \sum_{k_1-k_2+k_3-k=0}\int_0^t e^{-i \tau (k_1^2-k_2^2+k_3^2-k^2)}\eta_{k_1}(\tau)\overline{\eta}_{k_2}(\tau)\eta_{k_3}(\tau) \mathrm{d} \tau,
\end{equation*}for all $k \geq 0$. Recall the classical identification of the resonant set 
\begin{equation*}
\begin{cases}
k_1-k_2+k_3-k_4=0\\
k_1^2-k_2^2+k_3^2-k_4^2=0.
\end{cases}\Longleftrightarrow
\begin{cases}
k_1=k_2\\
k_3=k_4
\end{cases} \quad \mathrm{or}\quad 
\begin{cases}
k_1=k_4\\
k_2=k_3
\end{cases}.
\end{equation*}

\noindent In order to cancel all the resonances, we apply the transformation $v(t):=e^{2it \epsilon^2 \|\mu(0)\|_{L^2}^2}\mu(t)$. As $\|\mu\|_{L^2}$ is a conservation law, we have
\begin{equation}\label{NLSF scaling kill resonance}
i\partial_t v(t) +\partial_x^2 v(t)= \epsilon^2 \left[\Pi (|v(t)|^2v(t))-2\|v(t)\|^2_{L^2}v(t)\right], \qquad \forall t \in \mathbb{R}.
\end{equation}The equation $(\ref{NLSF scaling kill resonance})$ can be seen as the Hamiltonian system with respect to the energy function
\begin{equation}\label{Hamiltonian of NLSF scaling kill resonance}
H^{\epsilon}(v)= \frac{1}{2} \|\partial_x v\|_{L^2}^2 + \frac{\epsilon^2}{4}\left( \|v\|_{L^4}^4-2\|v\|_{L^2}^4\right)=: H_0(v)+\epsilon^2 R(v).
\end{equation}Then we have $R(v)=\frac{1}{4}\left(\sum_{\begin{smallmatrix}
k_1-k_2+k_3-k_4=0\\
k_1^2-k_2^2+k_3^2-k_4^2 \ne 0
\end{smallmatrix} }v_{k_1}\overline{v }_{k_2}v_{k_3}\overline{v}_{k_4}-\sum_{k\geq 0}|v_k|^4\right)$.

\subsubsection{The Birkhoff normal form}
Equation $(\ref{NLSF scaling kill resonance})$ is transferred to another Hamiltonian equation which is closer to the linear Schrödinger equation by Birkhoff normal form method. We try to find a symplectomorphism $\Psi_{\epsilon}$ such that the energy functional $H^{\epsilon}$ is reduced to the Hamiltonian 
\begin{equation*}
H^{\epsilon} \circ \Psi_{\epsilon}(v) = H_0(v) + \epsilon^2 \tilde{R}(v) + \mathcal{O}(\epsilon^{4-\alpha}),
\end{equation*}where $\tilde{R}(v)=-\frac{1}{4}\sum_{k\geq 0}|v_k|^4$. $\Psi_{\epsilon}$ is chosen as the value at time $1$ of the Hamiltonian flow of some energy $\epsilon^{2} F$.\\

\noindent We fix the value $s > \frac{1}{2}$. Recall that, given a smooth real valued function $H$, we denote $X_H$ the Hamiltonian vector field, i.e,
\begin{equation*}
\mathrm{d}H(v)(h) = \omega (h,X_H(v)).
\end{equation*}Given two smooth real-valued functions $F$ and $G$ on $H^s_+$, their Poisson bracket $\{F,G\}$ is defined by
\begin{equation}\label{Poisson bracket}
\{F,G\}(v):= \omega (X_F(v),X_G(v))=\frac{2}{i}\sum_{k\geq 0} \left(\partial_{\overline{v}_k}F\partial_{v_k}G - \partial_{\overline{v}_k}G\partial_{v_k}F \right)(v),
\end{equation}for all $v=\sum_{k\geq 0} v_k e^{ikx} \in H^s_+$. In particular, if $F$ and $G$ are respectively homogeneous of order $p$ and $q$, then their Poisson bracket is homogeneous of order $p+q-2$.

\begin{lem}\label{explicit formula of F}
Set $F(v):=\sum_{k_1-k_2+k_3-k_4=0}f_{k_1,k_2,k_3,k_4}v_{k_1}\overline{v}_{k_2}v_{k_3}\overline{v}_{k_4}$, with the coefficients
\begin{equation*}
f_{k_1,k_2,k_3,k_4}=\begin{cases}
\frac{i}{4\left(k_1^2-k_2^2+k_3^2-k_4^2\right)}, \qquad \mathrm{if} \quad k_1^2-k_2^2+k_3^2-k_4^2\ne 0,\\
0, \qquad\qquad\qquad\quad\quad\mathrm{otherwise}.
\end{cases}
\end{equation*}Thus, $F$ is real-valued and its Hamiltonian field $X_F$ is smooth on $H^s_+$ such that $\{F,H_0\}+R=\tilde{R}$ and the following estimates hold.
\begin{equation*}
\begin{cases}
\|X_F(v)\|_{H^s} \lesssim_s \|v\|_{H^s}^3,\\
\|\mathrm{d}X_F(v)\|_{B(H^s)} \lesssim_s \|v\|_{H^s}^2,
\end{cases}
\end{equation*}for all $v \in H^s_+$.
\end{lem}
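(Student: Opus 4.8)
The plan is to verify the three assertions about $F$ in turn: that $F$ is real-valued, that it solves the homological equation $\{F,H_0\}+R=\tilde R$, and that its Hamiltonian vector field enjoys the claimed estimates. First I would check the reality of $F$. Writing $F(v)=\sum f_{k_1,k_2,k_3,k_4}v_{k_1}\overline v_{k_2}v_{k_3}\overline v_{k_4}$ over the resonance-free sheet $\{k_1-k_2+k_3-k_4=0,\ k_1^2-k_2^2+k_3^2-k_4^2\neq 0\}$, the complex conjugate $\overline{F(v)}$ is obtained by swapping the roles of the holomorphic and antiholomorphic indices, i.e. by the index permutation $(k_1,k_2,k_3,k_4)\mapsto(k_2,k_1,k_4,k_3)$, under which the denominator $k_1^2-k_2^2+k_3^2-k_4^2$ changes sign while the factor $\frac{i}{4}$ is conjugated to $-\frac{i}{4}$; the two sign changes cancel, so $\overline{F(v)}=F(v)$. (Here I use that $f$ is supported on an index set that is symmetric under this swap, which follows from the symmetry of both defining equations.)

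Next I would derive the homological identity. Since $H_0(v)=\tfrac12\|\partial_x v\|_{L^2}^2=\tfrac12\sum_k k^2|v_k|^2$, a direct computation with the Poisson bracket formula $(\ref{Poisson bracket})$ gives, for a monomial $v_{k_1}\overline v_{k_2}v_{k_3}\overline v_{k_4}$, that $\{v_{k_1}\overline v_{k_2}v_{k_3}\overline v_{k_4},H_0\}=-i(k_1^2-k_2^2+k_3^2-k_4^2)\,v_{k_1}\overline v_{k_2}v_{k_3}\overline v_{k_4}$; hence $\{F,H_0\}=\sum f_{k_1,k_2,k_3,k_4}\cdot\bigl(-i(k_1^2-k_2^2+k_3^2-k_4^2)\bigr)v_{k_1}\overline v_{k_2}v_{k_3}\overline v_{k_4}$. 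By the very choice of $f_{k_1,k_2,k_3,k_4}=\frac{i}{4(k_1^2-k_2^2+k_3^2-k_4^2)}$ on the nonresonant sheet and $0$ elsewhere, this collapses to $\{F,H_0\}=\frac14\sum_{\text{nonresonant}}v_{k_1}\overline v_{k_2}v_{k_3}\overline v_{k_4}$. Comparing with the explicit form of $R$ recorded after $(\ref{Hamiltonian of NLSF scaling kill resonance})$, namely $R(v)=\frac14\bigl(\sum_{\text{nonresonant}}v_{k_1}\overline v_{k_2}v_{k_3}\overline v_{k_4}-\sum_k|v_k|^4\bigr)$, we read off $\{F,H_0\}+R=-\frac14\sum_k|v_k|^4=\tilde R(v)$, as claimed. (One should note that $H_0$ is only densely defined, but the identity is an identity of formal series / holds on the dense subspace $C^\infty_+$, which is all that is needed, and the gradients involved are the genuine ones once the estimates below are in hand.)

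The main work, and the step I expect to be the real obstacle, is the pair of estimates $\|X_F(v)\|_{H^s}\lesssim_s\|v\|_{H^s}^3$ and $\|dX_F(v)\|_{B(H^s)}\lesssim_s\|v\|_{H^s}^2$ for $s>\tfrac12$. The components of $X_F$ are, up to constants, $\partial_{\overline v_k}F=\sum f_{\cdot}$ over triples $(k_1,k_2,k_3)$ with $k_1-k_2+k_3=k$ (plus a symmetric term), so the $k$-th Fourier coefficient of $X_F(v)$ is a weighted trilinear expression $\sum_{k_1-k_2+k_3=k}\frac{v_{k_1}\overline v_{k_2}v_{k_3}}{k_1^2-k_2^2+k_3^2-k^2}$. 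The crucial point is that the denominator $k_1^2-k_2^2+k_3^2-k^2$, when the numerator index constraint $k_1-k_2+k_3-k=0$ holds and the expression is nonresonant, factors as $2(k_1-k_2)(k_3-k_2)=-2(k_1-k)(k_3-k)$, and in particular $|k_1^2-k_2^2+k_3^2-k^2|\geq 1$; moreover the standard small-divisor manipulation shows $\frac{1}{|k_1^2-k_2^2+k_3^2-k^2|}$ can be absorbed because the two largest of $k_1,k_2,k_3,k$ differ by at least the third-largest in a controlled way. Concretely, I would bound the divisor below by $\max(1, c\,(\langle k\rangle+\langle k_1\rangle+\langle k_2\rangle+\langle k_3\rangle)/\langle k_{(3)}\rangle)$ — or simply use $|k_1^2-k_2^2+k_3^2-k^2|\ge 1$ together with the resonance-avoidance — and then estimate the resulting trilinear sum by reducing to the algebra property of $H^s_+$ for $s>\tfrac12$: the operator $v\mapsto X_F(v)$ is a sum of terms each of which is $\Pi$ applied to a product of three factors drawn from $v,\overline v$, possibly with a bounded Fourier multiplier inserted, hence maps $(H^s)^3\to H^s$ with the stated cubic bound; differentiating in $v$ and counting that $dX_F(v)h$ is quadratic in $v$ and linear in $h$ gives the operator-norm bound $\lesssim_s\|v\|_{H^s}^2$ by the same algebra estimate. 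The delicate bookkeeping is exactly the verification that the small divisors never spoil the multiplier boundedness; this is where I would spend the bulk of the argument.
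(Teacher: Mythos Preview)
Your outline follows the paper's approach closely, but you are overcomplicating the estimate step and there is a sign slip in the homological computation.

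On the homological equation: with the convention $\{F,G\}=\tfrac{2}{i}\sum_k(\partial_{\bar v_k}F\,\partial_{v_k}G-\partial_{\bar v_k}G\,\partial_{v_k}F)$ from the paper, a monomial satisfies $\{v_{k_1}\overline v_{k_2}v_{k_3}\overline v_{k_4},H_0\}=+i(k_1^2-k_2^2+k_3^2-k_4^2)v_{k_1}\overline v_{k_2}v_{k_3}\overline v_{k_4}$, not $-i(\cdots)$. With the correct sign, $\{F,H_0\}=-\tfrac14\sum_{\text{nonres}}v_{k_1}\overline v_{k_2}v_{k_3}\overline v_{k_4}=-R+\tilde R$, and the identity follows. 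As written, your intermediate $\{F,H_0\}=+\tfrac14\sum_{\text{nonres}}$ would give $\{F,H_0\}+R=\tfrac12\sum_{\text{nonres}}-\tfrac14\sum_k|v_k|^4\neq\tilde R$, so your ``we read off'' step does not actually check out.

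On the estimates: you already wrote down the key fact, namely that on the nonresonant sheet the integer $k_1^2-k_2^2+k_3^2-k_4^2$ is nonzero and hence has absolute value at least $1$, so $\sup|f_{k_1,k_2,k_3,k_4}|\le\tfrac14$. That is the entire small-divisor analysis; nothing further is needed, and the factorization $2(k_1-k_2)(k_3-k_2)$, the ordering of indices, and the bound $\max(1,c(\langle k\rangle+\cdots)/\langle k_{(3)}\rangle)$ are all irrelevant here. Once the coefficients are uniformly bounded, the $k$-th Fourier coefficient of $X_F(v)$ is dominated in absolute value by a triple convolution of $(|v_j|)_j$, and the paper simply invokes Young's inequality $\ell^1*\ell^1*\ell^2\hookrightarrow\ell^2$ together with $(\langle j\rangle^s|v_j|)\in\ell^2$ and $(|v_j|)\in\ell^1$ for $s>\tfrac12$; this gives $\|X_F(v)\|_{H^s}\lesssim_s\|v\|_{H^s}^3$ and, by the same reasoning applied to the bilinear-in-$v$ expression for $dX_F(v)h$, the operator bound $\|dX_F(v)\|_{B(H^s)}\lesssim_s\|v\|_{H^s}^2$. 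There is no ``delicate bookkeeping'' and this should be two lines, not the bulk of the argument.
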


\begin{proof}
$F$ well defined because $\sup_{(k_1,k_2,k_3,k_4)\in \mathbb{Z}^4} |f_{k_1,k_2,k_3,k_4}| \leq \frac{1}{4}$, the Sobolev embedding yields that
$|F(v)| \leq \frac{1}{4}\left(\sum_{k\geq 0} |\hat{u}(k)|\right)^4 \lesssim_s \|u\|^4_{H^s}$. The Young's convolution inequality $l^1 * l^1 *l^2 \hookrightarrow l^2 $ implies that $\|X_F(v)\|_{H^s} \lesssim_s \|v\|_{H^s}^3$ and $\|\mathrm{d}X_F(v)\|_{B(H^s)} \lesssim_s \|v\|_{H^s}^2$. Using $(\ref{Poisson bracket})$ and the definition of $f_{k_1,k_2,k_3,k_4}$, we have
\begin{equation*}
\begin{split}
\{F,H_0\}(v) =& i\sum_{k_1-k_2+k_3-k_4=0}(k_1^2-k_2^2+k_3^2-k_4^2)f_{k_1,k_2,k_3,k_4}v_{k_1}\overline{v}_{k_2}v_{k_3}\overline{v}_{k_4}\\
= &-\frac{1}{4}\sum_{\begin{smallmatrix}
k_1-k_2+k_3-k_4=0 \\
k_1^2-k_2^2+k_3^2-k_4^2 \ne 0
\end{smallmatrix}}v_{k_1}\overline{v}_{k_2}v_{k_3}\overline{v}_{k_4}\\
=& -R(v)+\tilde{R}(v). \\
\end{split}
\end{equation*}
\end{proof}

\bigskip
\bigskip

\noindent Set $\chi_{\sigma}:=\exp(\epsilon^{2}\sigma X_F)$ the Hamiltonian flow of $\epsilon^{2} F$, i.e.,
\begin{equation*}
\frac{\mathrm{d}}{\mathrm{d}\sigma} \chi_{\sigma}(u)=\epsilon^{2} X_F(\chi_{\sigma}(u)), \qquad \chi_{0}(u)=u.
\end{equation*}We perform the canonical transformation $\Psi_{\epsilon}:=\chi_1 = \exp(\epsilon^{2} X_F)$. The next lemma will prove the local existence of $\chi_{\sigma}$, for $|\sigma|\leq 1$ and give the estimate of the difference between $v$ and $\Psi_{\epsilon}^{-1}(v)$

\begin{lem}\label{estimates of hamiltonian flow of epsilon 2 F}
For $s>\frac{1}{2}$, there exist two constants $\rho_s, C_s>0$ such that for all $v\in H^s_+$, if  $\epsilon  \|v\|_{H^s} \leq\rho_s$, then $\chi_{\sigma}(v)$ is well defined on the interval $[-1,1]$ and the following estimates hold:
\begin{equation*}
\begin{split}
&\sup_{\sigma\in [-1,1]}\|\chi_{\sigma}(v)\|_{H^s} \leq \frac{3}{2}\|v\|_{H^s},\\
&\sup_{\sigma\in [-1,1]}\|\chi_{\sigma}(v)-v\|_{H^s} \leq C_s\epsilon^{2} \|v\|^3_{H^s},\\
&\|\mathrm{d}\chi_{\sigma}(v)\|_{B(H^s)} \leq \exp(C_s\epsilon^{2} \|v\|^2_{H^s}|\sigma|),\qquad \forall \sigma \in [-1,1].
\end{split}
\end{equation*}
\end{lem}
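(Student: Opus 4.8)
The plan is to run a standard Picard/bootstrap argument on the ODE $\frac{d}{d\sigma}\chi_\sigma(v)=\epsilon^2 X_F(\chi_\sigma(v))$, using the two estimates from Lemma \ref{explicit formula of F} (namely $\|X_F(w)\|_{H^s}\lesssim_s\|w\|_{H^s}^3$ and $\|dX_F(w)\|_{B(H^s)}\lesssim_s\|w\|_{H^s}^2$) as the only structural input. First I would invoke the Cauchy--Lipschitz theorem: since $X_F$ is smooth on $H^s_+$ (it is a cubic polynomial vector field, hence locally Lipschitz), there is a unique maximal solution $\sigma\mapsto\chi_\sigma(v)$ on some open interval around $0$. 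The task is then to show this interval contains $[-1,1]$ together with the quantitative bounds, and for this I set up a continuity (bootstrap) argument. Let $g_s$ denote the implied constant so that $\|X_F(w)\|_{H^s}\le g_s\|w\|_{H^s}^3$, and define
\[
T^*:=\sup\Big\{\tau\in[0,1]:\ \chi_\sigma(v)\text{ exists and }\sup_{|\sigma|\le\tau}\|\chi_\sigma(v)\|_{H^s}\le\tfrac32\|v\|_{H^s}\Big\}.
\]
On $[0,T^*]$ the fundamental theorem of calculus gives $\|\chi_\sigma(v)\|_{H^s}\le\|v\|_{H^s}+\epsilon^2 g_s\int_0^{|\sigma|}\|\chi_{\sigma'}(v)\|_{H^s}^3\,d\sigma'\le\|v\|_{H^s}+\epsilon^2 g_s|\sigma|\,(\tfrac32\|v\|_{H^s})^3$. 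Choosing $\rho_s$ so that $g_s(\tfrac32)^3\rho_s^2\le\tfrac12$, i.e. $\rho_s:=(2g_s(\tfrac32)^3)^{-1/2}$, the hypothesis $\epsilon\|v\|_{H^s}\le\rho_s$ forces $\epsilon^2 g_s(\tfrac32)^3\|v\|_{H^s}^2\le\tfrac12\|v\|_{H^s}$, so $\|\chi_\sigma(v)\|_{H^s}\le\tfrac32\|v\|_{H^s}$ strictly, and by a standard open/closed argument $T^*=1$; this also prevents blow-up, so the solution genuinely extends to $[-1,1]$ (the negative direction is identical by $\sigma\mapsto-\sigma$). That proves the first displayed estimate.

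For the second estimate I feed the uniform bound back into the integral equation: for $|\sigma|\le1$,
\[
\|\chi_\sigma(v)-v\|_{H^s}\le\epsilon^2\int_0^{|\sigma|}\|X_F(\chi_{\sigma'}(v))\|_{H^s}\,d\sigma'\le\epsilon^2 g_s\,|\sigma|\,\big(\tfrac32\|v\|_{H^s}\big)^3\le C_s\epsilon^2\|v\|_{H^s}^3
\]
with $C_s:=g_s(\tfrac32)^3$. For the third estimate, I differentiate the flow with respect to the initial datum: $Y_\sigma:=d\chi_\sigma(v)$ satisfies the linearized equation $\frac{d}{d\sigma}Y_\sigma=\epsilon^2\,dX_F(\chi_\sigma(v))\circ Y_\sigma$ with $Y_0=\mathrm{Id}$. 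Hence $\frac{d}{d\sigma}\|Y_\sigma\|_{B(H^s)}\le\epsilon^2\|dX_F(\chi_\sigma(v))\|_{B(H^s)}\|Y_\sigma\|_{B(H^s)}\le\epsilon^2 h_s\big(\tfrac32\|v\|_{H^s}\big)^2\|Y_\sigma\|_{B(H^s)}$, where $h_s$ is the constant from the second bound in Lemma \ref{explicit formula of F}; Grönwall's inequality then yields $\|d\chi_\sigma(v)\|_{B(H^s)}\le\exp\big(\epsilon^2 h_s(\tfrac32)^2\|v\|_{H^s}^2|\sigma|\big)$, which is the claimed form after renaming the constant $C_s$ (one takes $C_s$ to be the max of the constants appearing in the three estimates and shrinks $\rho_s$ accordingly).

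The main obstacle — really the only non-bookkeeping point — is making the bootstrap self-consistent: one must choose $\rho_s$ before the argument so that the smallness condition $\epsilon\|v\|_{H^s}\le\rho_s$ simultaneously closes the a priori bound $\|\chi_\sigma\|_{H^s}\le\tfrac32\|v\|_{H^s}$ (this fixes the factor $\tfrac32$ and forces $\rho_s^2\le(2g_s(\tfrac32)^3)^{-1}$) and guarantees non-escape of the solution from the ball where the Lipschitz estimates of Lemma \ref{explicit formula of F} are being used. Once $\rho_s$ is pinned down this way, every other step is a one-line application of Grönwall or the triangle inequality. A minor care point: to know that the continuity argument doesn't stop short one uses that as long as $\|\chi_\sigma(v)\|_{H^s}$ stays in a bounded set the maximal existence time cannot be finite (the vector field is smooth and bounded on bounded sets), so $T^*$ is actually attained and, being not an escape point, must equal $1$.
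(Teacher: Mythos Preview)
Your proof is correct and follows essentially the same route as the paper: local existence via Cauchy--Lipschitz, a bootstrap on the integral equation to obtain the $\tfrac32$-bound (the paper packages this step into its abstract Lemma~\ref{bootstrap} with $m=3$, $a=\|v\|_{H^s}$, $b=C_s\epsilon^2$, while you run the open/closed continuity argument by hand), then the integral formula for the second estimate and Gr\"onwall on the linearized flow for the third. One cosmetic slip: the smallness condition you display should read $\epsilon^2 g_s(\tfrac32)^3\|v\|_{H^s}^2\le\tfrac12$ (a dimensionless inequality), not $\le\tfrac12\|v\|_{H^s}$; with that correction your choice of $\rho_s$ coincides exactly with the paper's $\rho_s=\tfrac{2}{3\sqrt{3C_s}}$.
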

\bigskip

\begin{proof}
The inequality $\|\mathrm{d}X_F(v)\|_{B(H^s)} \leq C_s \|v\|_{H^s}^2$ implies that the Lipschitz coefficient of the mapping $v \longmapsto \epsilon^{2} X_F(v)$ is bounded by $C_s\epsilon^{2} \|v\|_{H^s}^2 \leq C_s \rho_s^2$. If $\rho_s$ is sufficiently small, then the Hamiltionian flow $(\sigma, v) \mapsto \chi_{\sigma}(v)$ exists on the maximal interval $(-\sigma^*,\sigma^*)$, by the Picard-Lindelöf theorem. Assume that $\sigma^* <1$, then $\mathbf{Lemma}$ $\mathbf{\ref{explicit formula of F}}$ and the following integral formula 
\begin{equation}\label{intergal equation for chi}
\chi_{\sigma}(v) = v + \epsilon^{2} \int_0^{\sigma} X_F(\chi_{\tau}(v))\mathrm{d}\tau,\qquad \forall 0 \leq \sigma <\sigma^*.
\end{equation}yield that
\begin{equation*}
\sup_{0 \leq \tau\leq \sigma}\|\chi_{\tau}(v)\|_{H^s} \leq \|v\|_{H^s} + C_s \sigma \epsilon^{2}\sup_{0 \leq \tau\leq \sigma}\|\chi_{\tau}(v)\|_{H^s}^3, \qquad \forall 0 \leq \sigma <\sigma^*<1.
\end{equation*}By $\mathbf{Lemma}$ $\mathbf{\ref{bootstrap}}$ with $M(t)=\sup_{0 \leq \tau\leq t}\|\chi_{\tau}(v)\|_{H^s}$, $m=3$, $a=M(0)=\|v\|_{H^s}$ and $b=C_s\epsilon^{2}$, we have
\begin{equation*}
\sup_{|\sigma|\leq \sigma^*}\|\chi_{\sigma}(v)\|_{H^s} \leq \frac{3}{2}\|v\|_{H^s}, 
\end{equation*}if $\epsilon \|v\|_{H^s} \leq \frac{2}{3\sqrt{3 C_s}}$. This is a contradiction to the blow-up criterion. Hence $\sigma^* \geq 1$, and we have $\sup_{|\sigma|\leq 1}\|\chi_{\sigma}(v)\|_{H^s} \leq \frac{3}{2}\|v\|_{H^s}$, if $\epsilon \|v\|_{H^s} \leq \rho_s:= \frac{2}{3\sqrt{3 C_s}}$.\\

\noindent Since $\|X_F(v)\|_{H^s} \leq C_s \|v\|_{H^s}^3$, for all $\sigma\in [-1,1]$, we have
\begin{equation*}
\|\chi_{\sigma}(v)-v\|_{H^s}\leq |\sigma| \epsilon^{2}\sup_{0\leq t\leq |\sigma|}\|X_F(\chi_t(v))\|_{H^s}\leq C_s\epsilon^2 \sup_{0\leq t\leq |\sigma|}\|\chi_t(v)\|^3_{H^s}\leq C_s\epsilon^{2} \|v\|^3_{H^s}.
\end{equation*}if $\epsilon \|v\|_{H^s} \leq \rho_s$. We differentiate equation $(\ref{intergal equation for chi})$ and use again $\mathbf{Lemma}$ $\mathbf{\ref{explicit formula of F}}$ to obtain
\begin{equation*}
\begin{split}
\|\mathrm{d}\chi_{\sigma}(u)\|_{B(H^s)} = & \|\mathrm{Id}_{H^s} + \epsilon^2 \int_0^{\sigma}\mathrm{d}X_F(\chi_t(u))\mathrm{d}\chi_{t}(u) \mathrm{d}t\|_{B(H^s)}\\
\leq & 1 +\epsilon^2 \Big|\int_0^{\sigma}\|\mathrm{d}X_F(\chi_t(v))\|_{B(H^s)}\|\mathrm{d}\chi_{t}(v)\|_{B(H^s)} \mathrm{d}t\Big|\\
\leq & 1+C_s\epsilon^2 \|v\|_{H^s}^2\Big|\int_0^{\sigma}\|\mathrm{d}\chi_{t}(v)\|_{B(H^s)} \mathrm{d}t\Big|\\
\leq & e^{C_s\epsilon^2 |\sigma|\|v\|_{H^s}^2}, \qquad \forall \sigma \in [-1,1].
\end{split}
\end{equation*}Here we use the Gronwall inequality in the last step.\\
\end{proof}
\bigskip

\noindent Recall that $\Psi_{\epsilon}=\chi_1$. The normal form of the energy $H^{\epsilon}$ is given below.
\begin{lem}\label{Birkhoff normal form transformation}
For $s>\frac{1}{2}$, there exists a smooth mapping $Y : H^s_+ \longrightarrow H^s_+$ and a constant $C'_s >0$ such that
\begin{equation*}
\begin{cases}
X_{H^{\epsilon}\circ \Psi_{\epsilon}}= X_{H_0}+\epsilon^2 X_{\tilde{R}}+\epsilon^{4} Y,\\
\|Y(v)\|_{H^s}\leq C'_s \|v\|_{H^s}^5,
\end{cases}
\end{equation*}for all $v\in H^s_+$ such that $\epsilon \|v\|_{H^s}\leq \rho_s$. Let us set $w(t):=\Psi_{\epsilon}^{-1}(v(t))$, then we have
\begin{equation}\label{estimates of w(t) epsilon^(4-alpha)}
\Big|\frac{\mathrm{d}}{\mathrm{d}t} \|w(t)\|_{H^s}^2 \Big| \leq C'_s \epsilon^{4} \|w(t)\|^6_{H^s}, 
\end{equation}if $\epsilon \|w(t)\|_{H^s}\leq \rho_s$.

\end{lem}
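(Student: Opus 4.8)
The plan is to expand $H^{\epsilon}\circ\Psi_{\epsilon}$ by the Lie series of the flow $\chi_{\sigma}=\exp(\epsilon^{2}\sigma X_F)$ and to collect everything that is not in normal form into an $\mathcal{O}(\epsilon^{4})$ term. For any smooth $G$, the definition of the Poisson bracket together with $\mathrm dG(v)(X_F(v))=\omega(X_F(v),X_G(v))=\{F,G\}(v)$ gives $\frac{\mathrm d}{\mathrm d\sigma}(G\circ\chi_{\sigma})=\epsilon^{2}\{F,G\}\circ\chi_{\sigma}$, so Taylor's formula with integral remainder at $\sigma=0$ yields
\[
H^{\epsilon}\circ\Psi_{\epsilon}=H^{\epsilon}+\epsilon^{2}\{F,H^{\epsilon}\}+\epsilon^{4}\!\int_0^1(1-\sigma)\,\{F,\{F,H^{\epsilon}\}\}\circ\chi_{\sigma}\,\mathrm d\sigma .
\]
Since $H^{\epsilon}=H_0+\epsilon^{2}R$ and $\{F,H_0\}=\tilde R-R$ by $\mathbf{Lemma}$ $\mathbf{\ref{explicit formula of F}}$, one has $\{F,H^{\epsilon}\}=(\tilde R-R)+\epsilon^{2}\{F,R\}$, hence $H^{\epsilon}\circ\Psi_{\epsilon}=H_0+\epsilon^{2}\tilde R+\epsilon^{4}G_{\epsilon}$ with $G_{\epsilon}:=\{F,R\}+\int_0^1(1-\sigma)\{F,\{F,H^{\epsilon}\}\}\circ\chi_{\sigma}\,\mathrm d\sigma$. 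Taking the Hamiltonian vector field, using linearity of $H\mapsto X_H$ and the identity $X_{G\circ\phi}=(\mathrm d\phi)^{-1}X_G\circ\phi$ valid for a symplectomorphism $\phi$ (applied to $\phi=\chi_{\sigma}$), we obtain $X_{H^{\epsilon}\circ\Psi_{\epsilon}}=X_{H_0}+\epsilon^{2}X_{\tilde R}+\epsilon^{4}Y$ with $Y:=X_{G_{\epsilon}}$, that is
\[
Y(v)=X_{\{F,R\}}(v)+\int_0^1(1-\sigma)\,(\mathrm d\chi_{\sigma}(v))^{-1}\big[X_{\{F,\{F,H^{\epsilon}\}\}}(\chi_{\sigma}(v))\big]\,\mathrm d\sigma .
\]

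The estimate $\|Y(v)\|_{H^s}\lesssim_s\|v\|_{H^s}^5$ is the technical core. The basic tool is a degree-counting bound: if $P$ is a homogeneous polynomial Hamiltonian of degree $p$ on $H^s_+$ whose coefficients are bounded uniformly in the indices, then the Young convolution inequalities used in $\mathbf{Lemma}$ $\mathbf{\ref{explicit formula of F}}$ give $\|X_P(v)\|_{H^s}\lesssim_s\|v\|_{H^s}^{p-1}$ and $\|\mathrm dX_P(v)\|_{B(H^s)}\lesssim_s\|v\|_{H^s}^{p-2}$; combined with $X_{\{P,Q\}}=\pm[X_P,X_Q]$ this yields $\|X_{\{P,Q\}}(v)\|_{H^s}\le\|\mathrm dX_Q(v)\|_{B(H^s)}\|X_P(v)\|_{H^s}+\|\mathrm dX_P(v)\|_{B(H^s)}\|X_Q(v)\|_{H^s}$. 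Now $F,R,\tilde R$ have degree $4$ and bounded coefficients, and — crucially — one always substitutes $\{F,H_0\}=\tilde R-R$ so that the unbounded coefficients $k^2$ of $H_0$ never appear; thus $\{F,R\}$ has degree $6$ and $\|X_{\{F,R\}}(v)\|_{H^s}\lesssim_s\|v\|_{H^s}^5$, while $\{F,\{F,H^{\epsilon}\}\}=\{F,\tilde R-R\}+\epsilon^{2}\{F,\{F,R\}\}$ has degrees $6$ and $8$, giving
\[
\|X_{\{F,\{F,H^{\epsilon}\}\}}(z)\|_{H^s}\lesssim_s\|z\|_{H^s}^5+\epsilon^{2}\|z\|_{H^s}^7\lesssim_s\|z\|_{H^s}^5
\]
whenever $\epsilon\|z\|_{H^s}\le\tfrac32\rho_s$, the term $\epsilon^{2}\|z\|^7=(\epsilon\|z\|)^2\|z\|^5$ being absorbed. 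Finally, for $\epsilon\|v\|_{H^s}\le\rho_s$, $\mathbf{Lemma}$ $\mathbf{\ref{estimates of hamiltonian flow of epsilon 2 F}}$ gives $\|\chi_{\sigma}(v)\|_{H^s}\le\tfrac32\|v\|_{H^s}$, and a Gr\"onwall argument applied to $\sigma\mapsto(\mathrm d\chi_{\sigma}(v))^{-1}$ (whose derivative is $-\epsilon^{2}(\mathrm d\chi_{\sigma}(v))^{-1}\mathrm dX_F(\chi_{\sigma}(v))$) gives $\|(\mathrm d\chi_{\sigma}(v))^{-1}\|_{B(H^s)}\le e^{\frac94 C_s\rho_s^2}$; substituting these bounds in the formula for $Y(v)$ yields $\|Y(v)\|_{H^s}\lesssim_s\|v\|_{H^s}^5$. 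Smoothness of $Y$ follows from smoothness of $X_F,X_R$ and of $(\sigma,v)\mapsto\chi_{\sigma}(v)$.

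For the differential inequality, recall that $v(t)$ solves the Hamiltonian equation $\dot v=X_{H^{\epsilon}}(v)$ of $(\ref{NLSF scaling kill resonance})$; since $\Psi_{\epsilon}$ is a symplectomorphism, $w=\Psi_{\epsilon}^{-1}(v)$ solves $\dot w=X_{H^{\epsilon}\circ\Psi_{\epsilon}}(w)=X_{H_0}(w)+\epsilon^{2}X_{\tilde R}(w)+\epsilon^{4}Y(w)$. Hence $\frac{\mathrm d}{\mathrm dt}\|w\|_{H^s}^2=2\,\mathrm{Re}\langle\dot w,w\rangle_{H^s}$ has three contributions. In Fourier coordinates $X_{H_0}(w)=i\partial_x^2 w$ and $X_{\tilde R}(w)_k=i|w_k|^2w_k$, so both $\langle X_{H_0}(w),w\rangle_{H^s}=-i\sum_k(1+k^2)^s k^2|w_k|^2$ and $\langle X_{\tilde R}(w),w\rangle_{H^s}=i\sum_k(1+k^2)^s|w_k|^4$ are purely imaginary and drop out of the real part — this is precisely the benefit of having normalized away every resonant quartic monomial except the super-integrable $\tilde R$, which commutes with every $H^s$ norm. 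Only the $\epsilon^{4}$ term survives, and it is bounded by $2\epsilon^{4}\|Y(w)\|_{H^s}\|w\|_{H^s}\le C'_s\epsilon^{4}\|w\|_{H^s}^6$ after enlarging $C'_s$, which is the claim.

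The main obstacle is the estimate $\|Y(v)\|_{H^s}\lesssim_s\|v\|_{H^s}^5$: tracking the polynomial degrees of the iterated Poisson brackets, keeping all coefficients bounded by systematically replacing $\{F,H_0\}$ with $\tilde R-R$, absorbing the stray $\epsilon^{2}\|v\|^7$ using $\epsilon\|v\|_{H^s}\le\rho_s$, and controlling $(\mathrm d\chi_{\sigma}(v))^{-1}$ via $\mathbf{Lemma}$ $\mathbf{\ref{estimates of hamiltonian flow of epsilon 2 F}}$. By contrast, the vanishing of the $X_{H_0}$ and $X_{\tilde R}$ contributions to $\frac{\mathrm d}{\mathrm dt}\|w\|_{H^s}^2$ — the conceptual point — is a one-line Fourier computation.
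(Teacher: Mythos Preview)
Your proof is correct and follows essentially the same approach as the paper: Taylor-expand $H^{\epsilon}\circ\chi_1$ via the Lie series, use the homological identity $\{F,H_0\}=\tilde R-R$ to replace every dangerous occurrence of $H_0$, estimate the remainder vector field by degree counting, and control the pullback through $\chi_\sigma$ via the bounds of $\mathbf{Lemma}$ $\mathbf{\ref{estimates of hamiltonian flow of epsilon 2 F}}$. The only organizational variant is that the paper expands $H_0$ to second order and $\epsilon^2 R$ to first order separately, obtaining the cleaner purely degree-$6$ integrand $G(\sigma)=(1-\sigma)\{F,\tilde R\}+\sigma\{F,R\}$, whereas you expand $H^{\epsilon}$ at once and pick up an extra $\epsilon^2\{F,\{F,R\}\}$ (degree $8$) which you then absorb using $\epsilon\|v\|_{H^s}\le\rho_s$; both routes give the same $Y$ and the same estimate.
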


\begin{proof}
We expand the energy $H^{\epsilon}\circ \Psi_{ \epsilon} = H^{ \epsilon}\circ \chi_1$ with Taylor's formula at time $\sigma=1$ around $0$. Since $\chi_0 = \mathrm{Id}_{H^s_+}$, one gets
\begin{equation*}
\begin{split}
H^{\epsilon}\circ \chi_1 =&  H_0\circ \chi_1+\epsilon^2 R\circ \chi_1\\
=& \left( H_0+ \frac{\mathrm{d}}{\mathrm{d}\sigma} [H_0\circ \chi_{\sigma}]\arrowvert_{\sigma=0}+\int_0^1(1-\sigma)\frac{\mathrm{d}^2}{\mathrm{d}\sigma^2}[H_0\circ \chi_{\sigma}]\mathrm{d}\sigma \right) + \epsilon^2 R+\epsilon^2\int_0^1 \frac{\mathrm{d}}{\mathrm{d}\sigma}[R\circ \chi_{\sigma}]\mathrm{d}\sigma\\
= &  H_0 + \epsilon^2 \left[\{F,H_0\} + R\right] +\epsilon^{4}\int_0^1(1-\sigma)\{F,\{F,H_0\}\}\circ \chi_{\sigma} + \{F,R\}\circ\chi_{\sigma} \mathrm{d}\sigma\\
= &   H_0+ \epsilon^2 \tilde{R}+\epsilon^{4}\int_0^1\left[(1-\sigma)\{F, \tilde{R}\}+\sigma\{F,R\} \right]\circ \chi_{\sigma}\mathrm{d}\sigma\\
\end{split}
\end{equation*}We set $G(\sigma):=(1-\sigma)\{F, \tilde{R}\}+\sigma\{F,R\}$, $\forall \sigma\in [0,1]$. Since $X_{\{F,R\}}$ and $X_{\{F,\tilde{R}\}}(u)$ are homogeneous of degree $5$ with uniformly bounded coefficients, we have 
\begin{equation*}
\|X_{G(\sigma)}(v)\|_{H^s} \leq (1-\sigma)\|X_{\{F,\tilde{R}\}}(v)\|_{H^s} +\sigma\|X_{\{F,R\}}(v)\|_{H^s}\lesssim_s \|v\|_{H^s}^5, \quad \forall v\in H^s_+,
\end{equation*}By the chain rule of Hamiltonian vector fields:
\begin{equation}\label{composition symplectic gradient}
X_{G(\sigma)\circ \chi_{\sigma}}(v)= \mathrm{d}\chi_{-\sigma}(\chi_{\sigma}(v))\circ X_{G(\sigma)}( \chi_{\sigma}(v)), \quad \forall v\in H^s_+, \quad \forall \sigma \in [0,1],
\end{equation}and $\mathbf{Lemma}$ $\mathbf{\ref{estimates of hamiltonian flow of epsilon 2 F}}$, we have
\begin{equation*}
\|X_{G(\sigma)\circ \chi_{\sigma}}(v)\|_{H^s} \leq \|\mathrm{d}\chi_{-\sigma}(\chi_{\sigma}(v))\|_{B(H^s)}\|X_{G(\sigma)}(\chi_{\sigma}(v))\|_{H^s} \lesssim_s e^{C_s\epsilon^{2}\|v\|_{H^s}^2}\|\chi_{\sigma}(v)\|_{H^s}^5 \lesssim_s \|v\|_{H^s}^5,
\end{equation*}for all $v\in H^s_+$ such that $\epsilon \|v\|_{H^s}\leq \rho_s$. Thus we define $Y:=\int_0^1X_{G(\sigma)\circ \chi_{\sigma}}\mathrm{d}\sigma$ and we have
\begin{equation*}
X_{H^{\epsilon}\circ \Psi_{\epsilon}}= X_{H_0}+ \epsilon^2 X_{\tilde{R}}+\epsilon^{4} Y.
\end{equation*}If $\epsilon \|v\|_{H^s}\leq \rho_s$, then $\|Y(v)\|_{H^s}\lesssim_s\|v\|_{H^s}^5$. \\

\noindent Since $\widehat{X_{\tilde{R}}(w)}(k)=-i |w_k|^2 w_k$, $\forall k \geq 0$ and $w(t)=\Psi_{\epsilon}^{-1}(v(t))$, we have the following infinite dimensional Hamiltonian system on the Fourier modes:
\begin{equation}\label{v_1 ode form}
i \partial_t w_k(t) - k^2 w_k(t) -\epsilon^2 |w_k(t)|^2w_k(t)= i \epsilon^{4}\widehat{Y(w(t))}(k), \quad \forall k \geq 0.
\end{equation}If $\epsilon \|w(t)\|_{H^s}\leq \rho_s$, then we have
\begin{equation*}
\begin{split}
\Big|\partial_t \|w(t)\|_{H^s}^2 \Big| \leq   2 \epsilon^{4} \|Y(w(t))\|_{H^s}\|w(t)\|_{H^s} \lesssim_s   \epsilon^{4}\|w(t)\|_{H^s}^6.
\end{split}
\end{equation*}

\end{proof}
\bigskip

\subsubsection{End of the proof of the case  $0\leq \alpha < 2$ }
\begin{proof}Recall that $w(t)= \chi_{-1}(v(t))$ and $\|v(0)\|_{H^s}=1$. $\mathbf{Lemma}$ $\mathbf{\ref{estimates of hamiltonian flow of epsilon 2 F}}$ yields that
\begin{equation*}
\|v(0)-w(0)\|_{H^s}=\|v(0)-\chi_{-1}(v(0))\|_{H^s}\leq C_s \epsilon^{2} \|v(0)\|_{H^s}^3 \leq C_s. 
\end{equation*}Set $K_s:=3(C_s+1)$. Then $\|w(0)\|_{H^s} \leq \frac{K_s}{3}$. We define
\begin{equation*}
\epsilon_s:=\min \left(\frac{m_s}{3K_s},\quad \sqrt{8C_s K_s^2} \right)
\end{equation*}and
\begin{equation*}
T:=\sup\{t\geq 0 :  \sup_{0\leq \tau  \leq t}\|v(\tau)\|_{H^s} \leq 2K_s\}.
\end{equation*}For all $\epsilon \in (0, \epsilon_s)$ and $t\in [0,T]$, we have $\epsilon \|v(t)\|_{H^s} \leq m_s$. Hence $\mathbf{Lemma}$ $\mathbf{\ref{estimates of hamiltonian flow of epsilon 2 F}}$ gives the following estimate
\begin{equation*}
\begin{split}
\|w(t)\|_{H^s} \leq & \|v(t)\|_{H^s} + \|\chi_{-1}(v(t))-v(t)\|_{H^s}\\
\leq & \|v(t)\|_{H^s} + C_s \epsilon^{2} \|v(t)\|_{H^s}^3\\
\leq & 2K_s + 8C_s K_s^3 \epsilon^{2} \\
\leq & 3K_s.
\end{split}
\end{equation*}So we have $\epsilon \sup_{0\leq t \leq T} \|w(t)\|_{H^s}  \leq m_s$ and $\Big|\frac{\mathrm{d}}{\mathrm{d}t} \|w(t)\|_{H^s}^2 \Big|\leq C'_s \epsilon^{4} \|w(t)\|^6_{H^s}$, by $\mathbf{Lemma}$ $\mathbf{\ref{Birkhoff normal form transformation}}$. Set $a_s:=\frac{1}{3^7 K_s^4 C'_s}$. We can precise the estimate of $\|w(t)\|_{H^s}$ by limiting $|t|\leq a_s\epsilon^{-4}$:
\begin{equation*}
\begin{split}
\|w(t)\|_{H^s}^2 \leq &\|w(0)\|_{H^s}^2 + C'_s |t| \epsilon^{4} \|w(t)\|^6_{H^s} \leq  \frac{K_s^2}{9} + 3^6 C'_s K_s^6 |t| \epsilon^{4}\leq  \frac{4K_s^2}{9},
\end{split}
\end{equation*}for all $0 \leq t \leq \min(T,  \frac{a_s}{\epsilon^{4}})$. Then we have
\begin{equation*}
\begin{split}
\|v(t)\|_{H^s} \leq  \|w(t)\|_{H^s}+\|\chi_1(w(t))-w(t)\|_{H^s}
 \leq  \|w(t)\|_{H^s}+ C_s \epsilon^{2 } \|w(t)\|_{H^s}^3
 \leq K_s
\end{split}
\end{equation*}for all $t \in [0,  \frac{a_s}{\epsilon^{4}}]$. Consequently, we have
\begin{equation*}
\sup_{0\leq t \leq \frac{a_s}{\epsilon^{4}}}\|u(t)\|_{H^s}=\epsilon \sup_{0\leq t \leq \frac{a_s}{\epsilon^{4}}}\|v(t)\|_{H^s} \leq K_s  \epsilon.
\end{equation*}In the case $t<0$, we use the same procedure and we replace $t$ by $-t$.
\end{proof}

\subsubsection{The open problem of optimality}\label{optimal interval for alpha =0}
\noindent Let $u$ be the solution of the NLS-Szeg\H{o} equation 
\begin{equation}\label{NLS Szego with small data for the higher order time interval}
i \partial_t u + \partial_x^2 u = \Pi(|u|^2 u), \qquad
\|u(0,\cdot)\|_{H^s}=\epsilon.
\end{equation}Recall that $H^{\epsilon}=H_0 +\epsilon^2 R$ is the energy functional of the equation $(\ref{NLS Szego with small data for the higher order time interval})$ with
\begin{equation*}
\begin{split}
H_0(v)=\frac{1}{2} \|\partial_x v\|_{L^2}^2,\qquad R(v)=\frac{1}{4}(\|v\|_{L^4}^4-\|v\|_{L^2}^4).
\end{split}
\end{equation*}$\chi_{\sigma}=\exp(\epsilon^2\sigma X_{ F})$, with $F(v):=\sum_{k_1-k_2+k_3-k_4=0}f_{k_1,k_2,k_3,k_4}v_{k_1}\overline{v}_{k_2}v_{k_3}\overline{v}_{k_4}$, with the coefficients
\begin{equation*}
f_{k_1,k_2,k_3,k_4}=\begin{cases}
\frac{i}{4\left(k_1^2-k_2^2+k_3^2-k_4^2\right)}, \qquad \mathrm{if} \quad k_1^2-k_2^2+k_3^2-k_4^2\ne 0,\\
0, \qquad\qquad\qquad\quad\quad\mathrm{otherwise}.
\end{cases}
\end{equation*}We recall also that $\tilde{R}(v)=\{F,H_0\}(v)+R(v)=-\frac{1}{4}\sum_{k\geq 0}|v_k|^4$.

\noindent In order to get a longer time interval in which the solution is uniformly bounded by $\mathcal{O}(\epsilon)$, we expand the Hamiltonian $H^{\epsilon} \circ {\chi_1}$ by using the Taylor expansion of higher order to see whether the resonances can be cancelled by the Birkhoff normal form method.
\begin{equation*}
\begin{split}
& H^{\epsilon} \circ {\chi_1}\\
 = & H_0\circ{\chi_1} +\epsilon^2 R\circ{\chi_1} \\
=& H_0 + \partial_{\sigma}(H_0 \circ{\chi_{\sigma}})\big|_{{\sigma}=0} + \frac{1}{2}\partial^2_{\sigma}(H_0 \circ{\chi_{\sigma}})\big|_{{\sigma}=0} + \frac{1}{2} \int_0^1(1-\sigma)^2\partial_{\sigma}^3(H_0 \circ{\chi_{\sigma}})\mathrm{d}\sigma\\
& \qquad + \epsilon^2\left(R + \partial_{\sigma}(R \circ{\chi_{\sigma}})\big|_{{\sigma}=0} +\int_{0}^1 (1-\sigma)\partial_{\sigma}^2(H_0 \circ{\chi_{\sigma}})\mathrm{d}\sigma \right)\\
=&H_0 + \epsilon^2 \tilde{R}+ \frac{\epsilon^4}{2}\{F,R+\tilde{R}\} + \frac{\epsilon^6}{2}\int_0^1 (1-\sigma) \{F,\{F,(1-\sigma)\tilde{R}+(1+\sigma)R\}\}\circ\chi_{\sigma}\mathrm{d}\sigma
\end{split}
\end{equation*}We try to cancel the term $\frac{\epsilon^4}{2}\{F,R+\tilde{R}\}$ by using a canonical transform to $H_1^{\epsilon}=H^{\epsilon} \circ {\chi_1}$ with the following functional 
\begin{equation*}
G(v)=\sum_{k_1-k_2+k_3-k_4+k_5-k_6=0}g_{k_1,k_2,k_3,k_4,k_5,k_6}v_{k_1}\overline{v}_{k_2} v_{k_3} \overline{v}_{k_4} v_{k_5} \overline{v}_{k_6}.
\end{equation*}We want to solve the homological equation $\{G, H_0\}+\frac{1}{2}\{F,R+\tilde{R}\} = 0$. We can calculate that
\begin{equation*}
\begin{split}
&\frac{1}{2}\{F,R+\tilde{R}\}(v)\\
= &2 \mathrm{Im}\sum_{ \begin{smallmatrix}
 k_1-k_2+k_3-k_4+k_5-k_6=0\\
 k_i, k:=k_1-k_2+k_3 \geq 0\\
\end{smallmatrix} }f_{k_1,k_2,k_3,k}v_{k_1}\overline{v}_{k_2} v_{k_3} \overline{v}_{k_4} v_{k_5} \overline{v}_{k_6} \\
 &\qquad - 4 \mathrm{Im}\sum_{ \begin{smallmatrix}
 k_1-k_2+k_3-k_4+k_5-k_6=0\\
 k_i, k:=k_1-k_2+k_3 \geq 0, k_5=k_6\\
\end{smallmatrix} }f_{k_1,k_2,k_3,k}v_{k_1}\overline{v}_{k_2} v_{k_3} \overline{v}_{k_4} v_{k_5} \overline{v}_{k_6} \\
&\qquad - 2 \mathrm{Im}\sum_{ \begin{smallmatrix}
 k_1-k_2+k_3-k_4+k_5-k_6=0\\
 k_i, k:=k_1-k_2+k_3 \geq 0, k_4=k_5=k_6\\
\end{smallmatrix} }f_{k_1,k_2,k_3,k}v_{k_1}\overline{v}_{k_2} v_{k_3} \overline{v}_{k_4} v_{k_5} \overline{v}_{k_6} \\
\end{split}
\end{equation*}In the first term of the preceding formula, there is a resonance set $k_1^2-k_2^2+k_3^2-k_4^2+k_5^2-k_6^2=0$ that cannot be cancelled by the other two terms. We can see Gr\'ebert--Thomann $[\mathbf{\ref{Grebert Thomann Resonant dynamics for the quintic NLS}}]$ and Haus--Procesi $[\mathbf{\ref{Haus Procesi beating solution for quintic nls}}]$ for instance to analyse the resonant set for $6$ indices for the quintic NLS equation.
\begin{equation*}
\begin{split}
&\{G, H_0\}(v)\\
=&i\sum_{k_1-k_2+k_3-k_4+k_5-k_6=0}(k_1^2-k_2^2+k_3^2-k_4^2+k_5^2-k_6^2)g_{k_1,k_2,k_3,k_4,k_5,k_6}v_{k_1}\overline{v}_{k_2} v_{k_3} \overline{v}_{k_4} v_{k_5} \overline{v}_{k_6}.
\end{split}
\end{equation*}Thus the resonant subset
\begin{equation*}
\begin{cases}
k_1-k_2+k_3-k_4+k_5-k_6=0\\
k_1^2-k_2^2+k_3^2-k_4^2+k_5^2-k_6^2=0\\
\end{cases}
\end{equation*}should be cancelled before the Birkhoff normal form transform, just like the step $\mu \mapsto v=e^{2it \epsilon^2 \|\mu(0)\|_{L^2}^2}\mu(t)$, which can cancel all the resonances 
\begin{equation*}
\begin{cases}
k_1-k_2+k_3-k_4=0\\
k_1^2-k_2^2+k_3^2-k_4^2=0\\
\end{cases}
\end{equation*}before we do the canonical transformation $H^{\epsilon} \circ \chi_1$. We only know that
$f_{k_1,k_2,k_3,k_1-k_2+k_3}=f_{k_4,k_5,k_6,k_4-k_5+k_6}$ if \begin{equation*}
\begin{cases}
k_1-k_2+k_3-k_4+k_5-k_6=0\\
k_1^2-k_2^2+k_3^2-k_4^2+k_5^2-k_6^2=0.\\
\end{cases}
\end{equation*}This resonant subset contains the case $k_5 \ne k_6$. The optimality of the time interval for the case $0\leq \alpha < 2$ remains open.

\bigskip
\bigskip

\section{Orbital stability of the traveling plane wave $\mathbf{e}_m$}\label{Orbital stability section}
Consider the following NLS-Szeg\H{o} equation  
\begin{equation}\label{NLS-Szego epsilon^alpha orbital stability}
i \partial_t u + \epsilon^{\alpha}\partial_x^2 u = \Pi(|u |^2 u ), \qquad 0<\epsilon<1,  \qquad 0\leq \alpha \leq 2.
\end{equation}We shall prove at first $H^1$-orbital stability of the traveling waves $\mathbf{e}_m$, for all $m \in \mathbb{N}$. Then, we study their long time $H^s$-stability, for all $s \geq 1$.

\subsection{The proof of $\mathbf{Theorem}$ $\mathbf{\ref{H1 estimates of the difference TRAVELLING WAVE delta}}$}
We follow the idea of using conserved quantities mentioned in Gallay--Haragus $[\mathbf{\ref{Gallay--Haragus stability of small periodic waves}}]$ for equation $(\ref{NLS-Szego epsilon^alpha orbital stability})$.
\begin{proof}
For all $m\in \mathbb{N}$, $0<\epsilon<1$ and $0\leq \alpha \leq 2$, we denote $u(0,x)=e^{imx}+\epsilon f(x)$ with $\|f\|_{H^1}\leq 1$. The NLS-Szeg\H{o} equation has three conservation laws:
\begin{equation*}
\begin{cases}
Q(u(t)):=\|u(t)\|_{L^2}^2=\|u(0 )\|_{L^2}^2;\\
P(u(t)):=(Du(t),u(t))=P(u(0));\\
E^{\alpha, \epsilon}(u(t)):=\frac{\epsilon^{\alpha}}{2}\|\partial_x u(t)\|_{L^2}^2+\frac{1}{4}\|u(t)\|_{L^4}^4= E^{\alpha, \epsilon}(u(0)),
\end{cases}
\end{equation*}with $D=-i \partial_x$ and $(u,v):= \mathrm{Re}\int_{\mathbb{S}^1}\bar{u}v$. 
Thus the following quantity is conserved,
\begin{equation*}
\begin{split}
&\frac{\epsilon^{\alpha}}{2}\|Du(t)- m u(t)\|_{L^2}^2 + \frac{1}{4}\||u(t)|^2-1\|_{L^2}^2\\
=& E^{\alpha, \epsilon}(u(t))-\epsilon^{\alpha}m P(u(t))+ \frac{|m|^2\epsilon^{\alpha}-1}{2}Q(u(t)) +  \frac{1}{4}\\
=&\epsilon^2  \int_{\mathbb{S}^1}|\mathrm{Re}f(x)e^{-imx}|^2\mathrm{d}x + \frac{\epsilon^{2+\alpha}}{2}\|Df-mf\|_{L^2}^2 + \epsilon^3 \int_{\mathbb{S}^1} |f(x)|^2\mathrm{Re}(f(x)e^{-imx})\mathrm{d}x+\epsilon^4 \|f\|_{L^4}^4\\
\lesssim_m &  \epsilon                                                                                                                                                                                                                                                                                                                                                                                                       ^2 .
\end{split}
\end{equation*}Then, we have $\sup_{t \in \mathbb{R}}\|Du(t)- m u(t)\|_{L^2} \lesssim_m \epsilon^{1-\frac{\alpha}{2}} $. Recall that $\mathbf{e}_m(x)=e^{imx}$, then the following estimate holds,
\begin{equation*}
\begin{split}
\|u(t )-u_m(t) \mathbf{e}_m\|_{H^1}^2 =& \sum_{n \ne m} (1+n^2) |u_n(t)|^2 \lesssim_m\|Du(t )- m u(t )\|_{L^2}^2 \lesssim_m \epsilon^{2-\alpha}.
\end{split}
\end{equation*}We have
\begin{equation*}
\begin{split}
&\inf_{\theta \in \mathbb{R}}\|u(t )-u_m(0)e^{i \theta} \mathbf{e}_m\|_{H^1}^2\\
=&\|u(t)-u_m(0)e^{i( \arg u_m(t) - \arg u_m(0))} \mathbf{e}_m\|_{H^1}^2\\
=& (1+m^2)\Big||u_m(t)|-|u_m(0)|\Big|^2 + \|u(t)-u_m(t)  \mathbf{e}_m\|_{H^1}^2
\end{split}
\end{equation*}and by the conservation of $\|u(t)\|_{L^2}$, we have
\begin{equation*}
\begin{split}
 \Big||u_m(t)|-|u_m(0)|\Big|^2
\leq & \Big||u_m(t)|^2-|u_m(0)|^2\Big| \\
=& \Big| \sum_{n\ne m}|u_n(0)|^2- \sum_{n\ne m}|u_n(t)|^2\Big| \\
 = & \max(\|u(0)-u_m(0) \mathbf{e}_m\|_{L^2}^2, \quad \|u(t )-u_m(t)  \mathbf{e}_m \|_{L^2}^2)\\
 \lesssim_m & \epsilon^{2-\alpha}.
\end{split}
\end{equation*}Thus $\sup_{t\in \mathbb{R}}\|u(t )-u_m(0)e^{i( \arg u_m(t) - \arg u_m(0))} \mathbf{e}_m\|_{H^1} \lesssim_m \epsilon^{1-\frac{\alpha}{2}}$. The proof can be finished by $u_m(0)=1+ \epsilon f_m = 1+ \mathcal{O}(\epsilon)$.
\end{proof}

\noindent The preceding theorem also holds for the defocusing NLS equation on $\mathbb{T}^d$, for $d=1,2,3$ (in the energy sub-critical case) with $\mathbb{T}=\mathbb{R} \slash \mathbb{Z} \simeq \mathbb{S}^1$.(see Gallay--Haragus $[\mathbf{\ref{Gallay--Haragus stability of small periodic waves}}, \mathbf{\ref{Gallay--Haragus ORBITAL stability of periodic waves}}]$) We refer to Zhidkov $[\mathbf{\ref{Zhidov qualitative theory book}}$ Sect. $3.3]$ for a detailed analysis of the stability of plane waves.

\begin{rem}
Obtaining the estimate $\sup_{t \in \mathbb{R}}\|u(t )-u_m(t) \mathbf{e}_m\|_{L^{\infty}} \lesssim_m \epsilon^{1-\frac{\alpha}{2}}$ by the Sobolev embedding $H^1(\mathbb{S}^1) \hookrightarrow L^{\infty}$, we can also proceed by using the following estimate, which is uniform on $x$ and $t$,
\begin{equation*}
|u(t,x)|^2 -1 = |u_m(t)|^2-1 +\mathcal{O}_m(\epsilon^{1-\frac{\alpha}{2}}).
\end{equation*}Integrating the preceding term with respect to $x$, we have
\begin{equation*}
||u_m(t)|^2-1| \leq \||u(t)|^2-1\|_{L^2 } + \|\mathcal{O}_m(\epsilon^{1-\frac{\alpha}{2}})\|_{L^2 }  \lesssim_m \epsilon^{1-\frac{\alpha}{2}}
\end{equation*}Thus $\sup_{t\in  \mathbb{R}}|u_m(t)|=1+\mathcal{O}_m(\epsilon^{1-\frac{\alpha}{2}})$ and $
u_m(t) = e^{i\arg u_m(t)}+\mathcal{O}_m(\epsilon^{1-\frac{\alpha}{2}})$. Then we have
\begin{equation*}
\sup_{t\in  \mathbb{R}}\|u(t )-e^{i \arg u_m(t)} \mathbf{e}_m\|_{H^1}^2  \lesssim_m \epsilon^{2-\alpha}.
\end{equation*}Recall that if $z=1+\mathcal{O}(\epsilon)$ then $e^{i\arg z} =1+\mathcal{O}(\epsilon)$. 
\end{rem}

\subsection{Long time $H^s$-stability }
\noindent For every $s \geq 1$, we suppose that $\|u(0) - \mathbf{e}_m\|_{H^s}\leq \epsilon$. Thanks to the estimate
\begin{equation*}
\sup_{t\in \mathbb{R}}\|u(t)-e^{i \arg u_m(t )}\mathbf{e}_m\|_{H^1} \lesssim_m \epsilon^{1-\frac{\alpha}{2}} ,
\end{equation*}we change the variable $u \mapsto v=v^{m,\alpha,\epsilon} (t,x)=\sum_{n\geq 0}v_n(t) e^{inx} \in C^{\infty}(\mathbb{R}\times \mathbb{S}^1)$ such that
\begin{equation}\label{change of variable for orbital stability u to v}
u(t,x)=e^{i\arg u_m(t)}(e^{imx}+\epsilon^{1-\frac{\alpha}{2}}v(t,x))
\end{equation}to study $H^s$-stability of plane waves $\mathbf{e}_m$ and we have 
\begin{equation}\label{uniform estimate of v}
I_m:=\sup_{0\leq \alpha\leq 2}\sup_{0<\epsilon<1}\sup_{t\in \mathbb{R}}\|v(t)\|_{H^1}<+\infty.
\end{equation}
\begin{prop}\label{proposition to define theta varphi alpha<<1}
For every $s \geq 1$, $m\in \mathbb{N}$, $\epsilon\in (0,1)$ and $\alpha \in [0,2)$, if $u$ is smooth and solves $(\ref{NLS-Szego epsilon^alpha orbital stability})$ with $u(0,x)=e^{imx}+\epsilon f(x)$ and $\|f\|_{H^s} \leq 1$, $v$ is defined by formula $(\ref{change of variable for orbital stability u to v})$, then we have
\begin{equation*}
\begin{cases}
v_m(t) \in \mathbb{R}, \qquad\forall t \in \mathbb{R},\\
\sup_{t\in \mathbb{R}}|v_m(t)| \lesssim_m \epsilon^{\min(\frac{\alpha}{2}, 1-\frac{\alpha}{2})},\\
\sup_{t\in \mathbb{R}}|\partial_t v_m(t)|\lesssim_m \epsilon^{1-\frac{\alpha}{2}},\\
\|v(0)\|_{H^s} \lesssim_{m,s} \epsilon^{\frac{\alpha}{2}}.
\end{cases}
\end{equation*}Moreover, there exists a smooth function $\varphi=\varphi_m : \mathbb{R} \to \mathbb{R}\slash 2\pi\mathbb{Z} \simeq \mathbb{S}^1$ and $\epsilon^*_{m} \in (0,1)$ such that for every $1<2^-<2 $, we have
\begin{equation*}
\begin{cases}
\arg u_m(t)=-(1+m^2 \epsilon^{\alpha}) t + \epsilon^{\min(1, 2-\alpha)}\varphi(t),\\
 \sup_{0\leq \alpha\leq 2^-}\sup_{0<\epsilon <\epsilon^*_{m}}\sup_{t \in \mathbb{R}}|\varphi'(t)|< + \infty.\\
\end{cases}
\end{equation*}The parameter $v$ satisfies the following equation
\begin{equation}\label{NLSF with u0=epsilon + exp(ix) equation v alpha <<1}
\begin{split}
& i \partial_t v + \epsilon^{\alpha} \partial_x^2 v - H_{e^{2imx}}(v) -(1-m^2\epsilon^{\alpha}+ \epsilon^{\min(1, 2-\alpha)} \varphi'(t)) v\\
 = &\epsilon^{\min(\frac{\alpha}{2},1-\frac{\alpha}{2})}\varphi'(t)e^{imx} +  \epsilon^{1-\frac{\alpha}{2}}\Pi(e^{-imx}v^2 + 2e^{imx} |v|^2) + \epsilon^{2-\alpha} \Pi(|v|^2v),\\
\end{split}
\end{equation}where $H_{e^{2imx}}(v):=\Pi[e^{2imx}\bar{v}]$ denotes the Hankel operator.
\end{prop}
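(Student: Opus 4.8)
The plan is to translate the change of variable $(\ref{change of variable for orbital stability u to v})$ directly into the equation for $v$, and then extract each of the listed estimates from the conservation laws already established. First I would substitute $u = e^{i\arg u_m(t)}(\mathbf{e}_m + \epsilon^{1-\frac{\alpha}{2}}v)$ into $(\ref{NLS-Szego epsilon^alpha orbital stability})$. Writing $\theta(t) := \arg u_m(t)$, the term $i\partial_t u$ produces $-\theta'(t)\,e^{i\theta}(\mathbf{e}_m + \epsilon^{1-\frac{\alpha}{2}}v) + i e^{i\theta}\epsilon^{1-\frac{\alpha}{2}}\partial_t v$, while $\epsilon^\alpha\partial_x^2 u$ contributes $-m^2\epsilon^\alpha e^{i\theta}\mathbf{e}_m + \epsilon^{\alpha}e^{i\theta}\epsilon^{1-\frac{\alpha}{2}}\partial_x^2 v$, and the cubic term, after expanding $|u|^2u = |\mathbf{e}_m + \epsilon^{1-\frac{\alpha}{2}}v|^2(\mathbf{e}_m + \epsilon^{1-\frac{\alpha}{2}}v)$, yields $e^{i\theta}$ times $\mathbf{e}_m + \epsilon^{1-\frac{\alpha}{2}}(2|v|^2\text{-type and }\bar v\text{-type terms}) + \epsilon^{2(1-\frac{\alpha}{2})}(\dots)$. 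The key algebraic point is that $\Pi(|\mathbf{e}_m|^2(\epsilon^{1-\frac{\alpha}{2}}v) + \mathbf{e}_m^2\overline{(\epsilon^{1-\frac{\alpha}{2}}v)} + 2|\mathbf{e}_m|^2(\epsilon^{1-\frac{\alpha}{2}}v))$ gives the linear-in-$v$ contribution $\epsilon^{1-\frac{\alpha}{2}}(2v + H_{e^{2imx}}(v))$ using $\mathbf{e}_m^2\bar v = e^{2imx}\bar v$; collecting the $\mathcal O(1)$ (i.e. $\mathbf{e}_m$) terms forces the relation $\theta'(t) = -(1 + m^2\epsilon^\alpha) + (\text{correction from the }v\text{-dependent part of the phase})$, which is exactly the claimed form $\arg u_m(t) = -(1+m^2\epsilon^\alpha)t + \epsilon^{\min(1,2-\alpha)}\varphi(t)$ after defining $\varphi$ by integration. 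Substituting this back and dividing by $\epsilon^{1-\frac{\alpha}{2}}e^{i\theta}$ produces $(\ref{NLSF with u0=epsilon + exp(ix) equation v alpha <<1})$, with the source term $\epsilon^{\min(\frac\alpha2,1-\frac\alpha2)}\varphi'(t)\mathbf{e}_m$ coming from the mismatch between the $\epsilon^{\min(1,2-\alpha)}\varphi'$ piece of $\theta'$ and the overall $\epsilon^{1-\frac\alpha2}$ rescaling.

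For the pointwise estimates on $v_m$, I would project $(\ref{NLS-Szego epsilon^alpha orbital stability})$ onto the $m$-th Fourier mode before the change of variable: since $u_m(t) = e^{i\theta(t)}(1 + \epsilon^{1-\frac\alpha2}v_m(t))$ and $\theta = \arg u_m$, we have $1 + \epsilon^{1-\frac\alpha2}v_m(t) = |u_m(t)| \in \mathbb{R}_{>0}$, which immediately gives $v_m(t) \in \mathbb{R}$. The bound $\sup_t |v_m(t)| \lesssim_m \epsilon^{\min(\frac\alpha2, 1-\frac\alpha2)}$ comes from combining two facts established in the proof of Theorem \ref{H1 estimates of the difference TRAVELLING WAVE delta}: first $||u_m(t)|^2 - 1| \lesssim_m \epsilon^{2-\alpha}$ (hence $|v_m(t)| \lesssim_m \epsilon^{1-\frac\alpha2}$), and second the conserved quantity controls $\||u(t)|^2-1\|_{L^2}^2$ by $\mathcal O_m(\epsilon^2)$, so that $\epsilon^2 \gtrsim_m \epsilon^{2(1-\frac\alpha2)}(v_m(t))^2$ after isolating the zero mode, giving $|v_m(t)| \lesssim_m \epsilon^{\frac\alpha2}$; the minimum of the two exponents is the claim. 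For $\partial_t v_m$ I would differentiate $|u_m(t)|^2 = (1+\epsilon^{1-\frac\alpha2}v_m)^2$ in time and use the equation to write $\partial_t |u_m|^2 = 2\,\mathrm{Im}\,\overline{u_m}\,\widehat{\Pi(|u|^2u)}(m)$, which is $\mathcal O_m(\epsilon^{?})$ once one knows $u - u_m\mathbf{e}_m = \mathcal O_m(\epsilon^{1-\frac\alpha2})$ in $H^1 \hookrightarrow L^\infty$; tracking powers gives $|\partial_t v_m| \lesssim_m \epsilon^{1-\frac\alpha2}$. The initial-data bound $\|v(0)\|_{H^s} \lesssim_{m,s}\epsilon^{\frac\alpha2}$ follows because $\|u(0) - e^{i\arg u_m(0)}\mathbf{e}_m\|_{H^s} = \epsilon\|f\|_{H^s} + \mathcal O(\text{phase correction}) = \mathcal O_{m,s}(\epsilon)$, and $v(0) = \epsilon^{-(1-\frac\alpha2)}e^{-i\arg u_m(0)}(u(0) - e^{i\arg u_m(0)}\mathbf{e}_m)$, so dividing by $\epsilon^{1-\frac\alpha2}$ turns $\mathcal O(\epsilon)$ into $\mathcal O(\epsilon^{\frac\alpha2})$.

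Finally, for the smoothness and uniform boundedness of $\varphi'$: from the $m$-th mode projection of the full equation one gets an explicit formula $\theta'(t) = -\mathrm{Re}\big(\overline{u_m(t)}\,\widehat{(i\epsilon^\alpha\partial_x^2 u - \Pi(|u|^2u))}(m)\big)/|u_m(t)|^2$, which after inserting $u = e^{i\theta}(\mathbf{e}_m + \epsilon^{1-\frac\alpha2}v)$ and using $|u_m| = 1 + \mathcal O_m(\epsilon^{\min(\frac\alpha2,1-\frac\alpha2)})$, $\|v\|_{H^1}\leq I_m$, reduces to $\theta'(t) = -(1+m^2\epsilon^\alpha) + \epsilon^{\min(1,2-\alpha)}(\text{bounded expression in }v)$, whence $\varphi'(t)$ is this bounded expression and $\varphi$ is its antiderivative, which is smooth since $v \in C^\infty(\mathbb{R}\times\mathbb{S}^1)$. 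The uniform bound $\sup|\varphi'| < \infty$ over $\alpha \in [0, 2^-]$ and $\epsilon \in (0,\epsilon_m^*)$ is where one must be slightly careful: the bounded expression involves quadratic and cubic terms in $v$ weighted by powers $\epsilon^{1-\frac\alpha2}, \epsilon^{2-\alpha}$ whose total exponent is $\geq \min(1, 2-\alpha)$, so factoring out $\epsilon^{\min(1,2-\alpha)}$ leaves only nonnegative powers of $\epsilon$ times $\|v\|_{H^1}^{\le 3} \leq I_m^3$, uniformly bounded once $\epsilon$ is small. The main obstacle is purely bookkeeping: making sure every term produced by expanding the cubic nonlinearity lands at the advertised power of $\epsilon$ — in particular that the would-be $\epsilon^{1-\frac\alpha2}$ resonant self-interaction of $v$ with $\mathbf{e}_m$ reorganizes cleanly into the linear operator $2v + H_{e^{2imx}}(v)$ plus the phase term, with no leftover term of a lower power — and choosing $\epsilon_m^*$ small enough that $|u_m|$ stays bounded away from zero so the division by $|u_m|^2$ is legitimate.
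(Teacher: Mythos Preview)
Your plan is correct and follows essentially the same route as the paper: substitute the ansatz $u=e^{i\theta(t)}(\mathbf{e}_m+\epsilon^{1-\alpha/2}v)$ into the equation, collect powers of $\epsilon$, project onto the $m$-th mode to read off both $\partial_t v_m$ and the phase correction $\theta'+1+m^2\epsilon^\alpha$, and then define $\varphi$ accordingly. The paper carries this out by first writing the full $v$-equation (your equation $(\ref{NLSF with u0=epsilon + exp(ix) equation v alpha <<1})$ before $\varphi$ is introduced), projecting onto mode $m$, and then taking separately the imaginary part (which gives $|\partial_t v_m|\lesssim_m\epsilon^{1-\alpha/2}$ since $v_m$ is real) and the real part (which gives $1+m^2\epsilon^\alpha+\theta'=\mathcal O_m(\epsilon^{\min(1,2-\alpha)})$ directly). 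Your variant---differentiating $|u_m|^2$ in time and writing $\theta'$ via $\mathrm{Re}(\overline{u_m}\cdot[\text{equation}])/|u_m|^2$---is the same computation unwound.

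One small point: for the $|v_m|$ bound the paper uses a single application of $L^2$-conservation,
\[
1+2\epsilon\,\mathrm{Re}f_m+\epsilon^2\|f\|_{L^2}^2 \;=\; 1+2\epsilon^{1-\alpha/2}v_m(t)+\epsilon^{2-\alpha}\|v(t)\|_{L^2}^2,
\]
which immediately yields $|v_m|\lesssim_m \epsilon^{\alpha/2}+\epsilon^{1-\alpha/2}$ and hence the claimed minimum. Your two-step argument works too, but the intermediate claim $\bigl||u_m|^2-1\bigr|\lesssim_m\epsilon^{2-\alpha}$ should read $\epsilon^{\min(1,2-\alpha)}$ (the comparison with $|u_m(0)|^2=1+\mathcal O(\epsilon)$ introduces an $\epsilon$); this does not affect your conclusion.
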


\begin{proof}
Since $u_m(t)=e^{i\arg u_m(t)}(1+ \epsilon^{1-\frac{\alpha}{2}}  v_m(t))$, we have $1+\epsilon^{1-\frac{\alpha}{2}}  v_m(t) = |u_m(t)| \in \mathbb{R}$. So $v_m(t) \in \mathbb{R}$, for all $t \in \mathbb{R}$. By using the conservation law $\|\cdot\|_{L^2}$ and estimate $(\ref{uniform estimate of v})$, we have
\begin{equation*}
\begin{split}
& 1+2\epsilon \mathrm{Re}f_m + \epsilon^2=  \|u(0)\|_{L^2}^2 =  \|u(t)\|_{L^2}^2= 1+2 \epsilon^{1-\frac{\alpha}{2}} v_m(t) + \epsilon^{2-\alpha} \|v(t)\|_{L^2}^2, \\
\end{split}
\end{equation*}which yields that $\sup_{t\in \mathbb{R}}|v_m(t)|\lesssim_m \epsilon^{\min(\frac{\alpha}{2}, 1-\frac{\alpha}{2})}$. Recall that
\begin{equation*}
u(0,x)= \sum_{n\geq 0} u_n(0) e^{inx} = e^{imx}+\epsilon f(x).
\end{equation*}Then we have $u_m(0)=1+\epsilon f_m =1+\mathcal{O}(\epsilon)$ and $|e^{i\arg u_m(0)}-1|\lesssim \epsilon$. Thus we have 
\begin{equation*}
\epsilon^{1-\frac{\alpha}{2}}\|v(0)\|_{H^s} \lesssim (1+m^2)^{\frac{s}{2}}|e^{i\arg u_m(0)} -1| + \epsilon \|f\|_{H^s} \lesssim_{m,s} \epsilon.
\end{equation*}We define $\theta(t):=\mathrm{arg}(u_m(t))$. Combing the following two formulas \begin{equation*}
i\partial_t u + \epsilon^{\alpha}\partial_x^2 u = e^{i\theta(t)} \left[\epsilon^{1-\frac{\alpha}{2}}  (i\partial_t v + \epsilon^{\alpha}\partial_x^2 v-\theta'(t) v)-(m^2\epsilon^{\alpha}+\theta'(t))e^{imx}\right]
\end{equation*}
\begin{equation*}
\Pi[|u|^2 u]=e^{i\theta(t)}\left[e^{imx}+\epsilon^{1-\frac{\alpha}{2}}(2v +\Pi(e^{2imx}\overline{v})) + \epsilon^{2-\alpha} \Pi(e^{-imx}v^2 +2e^{imx}|v|^2) + \epsilon^{3(1-\frac{\alpha}{2})} \Pi(|v|^2v)  \right]
\end{equation*}we obtain that
\begin{equation}\label{equation with epsilon is not divided alpha}
\begin{split}
&\epsilon^{1-\frac{\alpha}{2}}[i\partial_t v + \epsilon^{\alpha}\partial_x^2 v -H_{e^{2imx}}(v) - (2+\theta'(t))v]\\
=&(1+m^2\epsilon^{\alpha}+\theta'(t))e^{imx} + \epsilon^{2-\alpha}\Pi(e^{-imx}v^2 + 2e^{imx}|v|^2)+\epsilon^{3(1-\frac{\alpha}{2})}  \Pi(|v|^2v),
\end{split}
\end{equation}where $H_{e^{2imx}}(v):=\Pi[e^{2imx}\bar{v}]$ denotes the Hankel operator. The Fourier mode $ v_m(t)$ satisfies the following equation 
\begin{equation*}
\begin{split}
&\epsilon^{1-\frac{\alpha}{2}}\left[i\partial_t v_m(t) - m^2 \epsilon^{\alpha} v_m(t)-\overline{v_m(t)} - (2+\theta'(t))v_m(t)\right]\\
 =&1+m^2\epsilon^{\alpha}+\theta'(t) + \epsilon^{2-\alpha}\Pi(e^{-imx}v^2 + 2e^{imx}|v|^2)_m(t)+\epsilon^{3(1-\frac{\alpha}{2})}\Pi(|v|^2v)_m(t).
 \end{split}
\end{equation*}Estimate $(\ref{uniform estimate of v})$ yields that 
\begin{equation*}
\begin{split}
&\sup_{t \in \mathbb{R}}|\epsilon^{2-\alpha}\Pi(e^{-imx}v^2 + 2e^{imx}|v|^2)_m(t)+\epsilon^{3(1-\frac{\alpha}{2})} \Pi(|v|^2v)_m(t)|  \lesssim_m \epsilon^{2-\alpha}.
\end{split}
\end{equation*}Thus, we have
\begin{equation}\label{equation v_1 with O(epsilon2) alpha}
\epsilon^{1- \frac{\alpha}{2}} \left[i\partial_t v_m(t)  - (3+m^2\epsilon^{\alpha}+\theta'(t))v_m(t)\right] =1+m^2\epsilon^{\alpha}+\theta'(t) + \mathcal{O}_m(\epsilon^{2-\alpha}).
\end{equation}The imaginary part and the real part of $( \ref{equation v_1 with O(epsilon2) alpha})$ give respectively the two following estimates:
\begin{equation*}
\begin{split}
&\sup_{t \in \mathbb{R}}|\partial_t v_m(t)|  \lesssim_m \epsilon^{1- \frac{\alpha}{2}};\\
& 1+m^2\epsilon^{\alpha}+\theta'(t)=\frac{-2\epsilon^{1- \frac{\alpha}{2}}v_m(t)+ \mathcal{O}_m(\epsilon^{2-\alpha})}{1+\epsilon^{1- \frac{\alpha}{2}} v_m(t)}=\frac{\mathcal{O}_m(\epsilon^{\min(1, 2-\alpha)})}{1+\mathcal{O}_m(\epsilon^{\min(1, 2-\alpha)})}=\mathcal{O}_m(\epsilon^{\min(1, 2-\alpha)}).
\end{split}
\end{equation*}for all $0<\epsilon\ll 1$. Then we define $\varphi(t):= \frac{(1+m^2\epsilon^{\alpha}) t + \theta(t)}{\epsilon^{\min(1, 2-\alpha)}}$. Consequently, there exists $\epsilon^*_{m} \in (0,1)$ such that
\begin{equation*}
\begin{cases}
\arg u_m(t)=-(1+m^2\epsilon^{\alpha})t + \epsilon^{\min(1, 2-\alpha)}\varphi(t)\\
 \sup_{0\leq \alpha \leq  2^-}\sup_{0<\epsilon< \epsilon^*_{m }}\sup_{t\in \mathbb{R}}|\varphi'(t)| < +\infty.\\
\end{cases}
\end{equation*}We replace $\theta'(t)$ by $-1-m^2\epsilon^{\alpha}+\epsilon^{\min(1, 2-\alpha)}\varphi'(t)$ in $(\ref{equation with epsilon is not divided alpha})$ and we obtain $(\ref{NLSF with u0=epsilon + exp(ix) equation v alpha <<1})$.
\end{proof}

\subsubsection{Proof of $\mathbf{Proposition}$ $ \mathbf{\ref{epsilon ^ (alpha/2 -1) time estimate for NLS Szego epsilon alpha}}$}
\noindent For every $n\in  \mathbb{N}$, we define the projector $\mathbb{P}_n : L^2_+ \to L^2_+$ such that
 \begin{equation*}
 \mathbb{P}_n (\sum_{k\geq 0}v_k e^{ikx}) = \sum_{j= 0}^n v_k e^{ikx}.
 \end{equation*}Now we prove $\mathbf{Proposition}$ $\mathbf{\ref{epsilon ^ (alpha/2 -1) time estimate for NLS Szego epsilon alpha}}$ by using a bootstrap argument.

\begin{proof}
At the beginning, we suppose that $u(0) \in C^{\infty}_+$. In the general case $u(0)\in H^s_+$, the proof can be completed by using the continuity of the flow $u(0) \mapsto u$ from  $H^s_+$ to $C([-\frac{b_{s,m}}{\epsilon^{1-\frac{\alpha}{2}}},\frac{b_{s,m}}{\epsilon^{1-\frac{\alpha}{2}}}], H^s_+)$. We use the same transformation $u \mapsto v$ as $(\ref{change of variable for orbital stability u to v})$. $\mathbf{Proposition}$ $\mathbf{\ref{proposition to define theta varphi alpha<<1}}$ yields that there exists $A_{m,s}\geq 1$ such that $\|v(0)\|_{H^s}\lesssim_{m,s}\epsilon^{ \frac{\alpha}{2}} \leq A_{m,s}$. By using estimate $(\ref{uniform estimate of v})$, we have
\begin{equation*}
\sup_{\epsilon \in (0,1)}\sup_{t\in \mathbb{R}}\|\mathbb{P}_{2m} (v(t))\|_{H^s}\leq (1+4m^2)^{\frac{s}{2} }I_m
\end{equation*}We define that $L_{m,s}:=\max( 2(1+4m^2)^{\frac{s}{2} }I_m, 2A_{m,s}+1)$ and 
\begin{equation*}
T := \sup \{t >0: \sup_{0\leq \tau \leq t}\|v(\tau)\|_{H^s} \leq 2L_{m,s}\}.
\end{equation*}Rewrite equation $(\ref{NLSF with u0=epsilon + exp(ix) equation v alpha <<1})$ on Fourier modes and we have
\begin{equation*}
i\partial_t v_n -(1+(n^2-m^2)\epsilon^{\alpha} +\epsilon^{\min(1, 2-\alpha)} \varphi'(t))v_n = \epsilon^{1-\frac{\alpha}{2}} [Z(v(t))]_n, \qquad \forall n \geq 2m+1,
\end{equation*}with $Z(v)=\sum_{n\geq 0}[Z(v)]_n e^{inx} = \Pi(e^{-imx}v^2 + 2e^{imx} |v|^2) + \epsilon^{1-\frac{\alpha}{2}} \Pi(|v|^2v) $. Then we have
\begin{equation}\label{estimate of Z }
\|Z(v)-\mathbb{P}_{2m}(Z(v))\|_{H^s} \lesssim_{s} \|v\|_{H^s}^2 + \epsilon^{1-\frac{\alpha}{2}}\|v\|_{H^s}^3. 
\end{equation}Then we have
\begin{equation*}
\begin{split}
|\partial_t \|v(t)-\mathbb{P}_{2m}(v(t))\|_{H^s}^2| \leq & 2 \epsilon^{1-\frac{\alpha}{2}} \sum_{n\geq 2m+1} (1+n^2)^s |v_n(t)||[Z(v(t))]_n|\\
\leq & 2 \epsilon^{1-\frac{\alpha}{2}} \|v(t)\|_{H^s}\|Z(v(t))-\mathbb{P}_{2m}( Z(v(t))\|_{H^s}\\
\lesssim_s & \epsilon^{1-\frac{\alpha}{2}}\|v(t)\|_{H^s}^3 + \epsilon^{2-\alpha}\|v(t)\|_{H^s}^4.\\
\end{split}
\end{equation*}For all $t \in [0, T]$, we have
\begin{equation*}
\begin{split}
\|v(t)\|_{H^s}^2 = &\|\mathbb{P}_{2m} v(t)\|_{H^s}^2+\|v(t)-\mathbb{P}_{2m}(v(t))\|_{H^s}^2\\
\leq & (1+4m^2)^{s}I_m^2+ C_s( \epsilon^{1-\frac{\alpha}{2}}\|v(t)\|_{H^s}^3 + \epsilon^{2-\alpha}\|v(t)\|_{H^s}^4)t + \|v(0)\|_{H^s}^2\\
\leq & \frac{1}{4}L_{m,s}^2 + 32C_s L_{m,s}^4 \epsilon^{1-\frac{\alpha}{2}} t + A_{m,s}^2.
\end{split}
\end{equation*}Define $b_{m,s}=\frac{1}{64C_s L_{m,s}^2}$ and we have $\|v(t)\|_{H^s} \leq  L_{m,s} $, for all $t\in [0, \frac{b_{s,m}}{\epsilon^{1-\frac{\alpha}{2}}}]$. The case $t<0$ is similar.
\end{proof}

\subsubsection{Homological equation}
\noindent We try to improve $\mathbf{Proposition}$ $\mathbf{\ref{epsilon ^ (alpha/2 -1) time estimate for NLS Szego epsilon alpha}}$ and get a longer time interval in which the solution $v$ is still bounded by $\mathcal{O}(1)$, by using The Birkhoff normal form method. Recall the symplectic form $\omega(u,v)=\mathrm{Im}\int_{\mathbb{S}^1}u\overline{v}\frac{\mathrm{d}\theta}{2\pi}
$ on the energy space $H^1_+$ and the Poisson bracket for two smooth real-valued functionals $F,G: C^{\infty}_+ \to \mathbb{R}$
\begin{equation*}
\{F,G\}(v)= \frac{2}{i}\sum_{k\geq 0} \left(\partial_{\overline{v}_k}F\partial_{v_k}G - \partial_{\overline{v}_k}G\partial_{v_k}F \right)(v),
\end{equation*}for all $v=\sum_{k\geq 0} v_k e^{ikx} \in C^{\infty}_+$. For all $0\leq \alpha < 2$ and $0<\epsilon\ll 1$, equation $(\ref{NLSF with u0=epsilon + exp(ix) equation v alpha <<1})$ has also the Hamiltonian formalism, which is non autonomous. Its energy functional is
\begin{equation*}
\begin{split}
&\mathcal{H}^{m,\alpha, \epsilon}(t,v)\\
=&\mathcal{H}_0^{m, \alpha,\epsilon }(v) + \epsilon^{1-\frac{ \alpha}{2}}\mathcal{H}_{1 }^m(v)+\frac{\epsilon^{2-\alpha}}{4}\mathcal{N}_{4 }(t,v) + \epsilon^{\min(\frac{\alpha}{2},1-\frac{\alpha}{2})}\varphi'(t)(\mathcal{L}_m(v)+\frac{\epsilon^{1-\frac{ \alpha}{2}}}{2}\mathcal{N}_2(v)),
\end{split}
\end{equation*}with
\begin{equation*}
\begin{cases}
\mathcal{H}_0^{m,\alpha,\epsilon}(v) = \frac{\epsilon^{\alpha}}{2}\|\partial_x v\|_{L^2}^2 + \frac{1-m^2 \epsilon^{\alpha}}{2}\|v\|_{L^2}^2 + \frac{1}{2}\int_{\mathbb{S}^1}\mathrm{Re}(e^{2imx} \overline{v}^2), \\
\mathcal{H}_{1 }^m( v) = \mathrm{Re}(  \int_{\mathbb{S}^1} e^{-imx} |v|^2v),\\
\mathcal{N}_{p}(v)= \|v\|_{L^p}^p , \qquad p=2 \quad \mathrm{or} \quad 4,\\
\mathcal{L}_m(v)=\mathrm{Re}v_m.\\
\end{cases}
\end{equation*}We want to cancel all the high frequencies in the term $\mathcal{H}_{1 }^m( v)$ by composing $\mathcal{H}^{m,\alpha, \epsilon}$ and the Hamiltonian flow of some auxiliary functional $\mathcal{F}_m$. In order to get the appropriate $\mathcal{F}_m$, we need to solve the homological system
\begin{equation*}
\begin{cases}
\{\mathcal{F}_m, \mathcal{H}^{m,\alpha, \epsilon}_0\}(v)+\mathcal{H}_{1 }^m(v) = \mathcal{R}_m(v)\\
\{\mathcal{F}_m,\mathcal{L}_m\}(v)=-\tilde{\mathcal{N}}_2(v):=-\sum_{n\geq 2m+1} |v_n|^2
\end{cases}
\end{equation*}such that $\mathcal{R}_m$ depends only on finitely many Fourier modes of $v$. The remaining coefficient in front of $\epsilon^{1-\frac{\alpha}{2}}$ would be $\mathcal{R}_m+\varphi'(t)  \epsilon^{\min(\frac{\alpha}{2},1-\frac{\alpha}{2})} (-\tilde{\mathcal{N}}_2+\frac{\mathcal{N}_2}{2})$. One can prove the following proposition.(see also $\mathbf{Proposition}$ $\mathbf{\ref{homological equation alpha =0 prop}}$ and $\mathbf{Appendix}$ for the proof in the special case $\alpha=0$)

\begin{prop}\label{homological equation for all alpha}
For every $m \in \mathbb{N}$, we define the following homogenous functional $\mathcal{F}_m$ of degree $3$:
\begin{equation*}
\mathcal{F}_m(v)=\sum_{\begin{smallmatrix}
 j-l+k=m\\ j,k,l \in \mathbb{N}
\end{smallmatrix}}\mathrm{Re}(a_{j,l,k}v_j \overline{v}_l v_k),\qquad \forall v\in \bigcup_{n\geq 0} \mathbb{P}_n( C^{\infty}_+),
\end{equation*}for some $a_{k,l,j}=a_{j,l,k} \in \mathbb{C}$. Then we have the following formula
\begin{equation*}
\{\mathcal{F}_m,\mathcal{H}_0^{m, \alpha,\epsilon}\}(v)+\mathcal{H}^m_{1 }(v) =\mathrm{Re} \left( \mathrm{Reson}^{low}(v) +\mathrm{Reson}^{\geq 2m+1}(v)\right),
\end{equation*}where 
\begin{equation*}
\begin{split}
& \mathrm{Reson}^{low}(v)\\
= & \sum_{0 \leq j,k \leq 2m} c_{j,j+k-m,k}v_{j} \overline{v}_{j+k-m} v_k 
     -\sum_{ \begin{smallmatrix} 
j-l+k=m  \\
j,l,k \in \mathbb{N}, l\leq 2m
\end{smallmatrix}} ia_{j,l,k}v_j v_k \left[(1+(l^2-m^2)\epsilon^{\alpha}) \overline{v}_l+v_{2m-l} \right],
   \end{split}
\end{equation*}for some $c_{j,j+k-m,k}=c_{k,j+k-m,j}  \in \mathbb{C}$ and
\begin{equation*}
\begin{split}
&  \mathrm{Reson}^{\geq 2m+1}(v)\\
=&\sum_{k\geq 2m+1}\sum_{j=0}^{m-1} 2\left(-i \overline{a}_{2m-j,m+k-j,k}+(1+2(m-j)(k-j)\epsilon^{\alpha})i a_{j,k,k+m-j}+1 \right)v_j \overline{v}_k v_{k+m-j}\\
+&\sum_{k\geq 2m+1}\sum_{j=0}^{m-1} 2((1-2(k-m)(m-j)\epsilon^{\alpha}) i a_{2m-j,m+k-j,k}-i \overline{a}_{j,k,k+m-j}+1)v_{2m-j} \overline{v}_{m+k-j} v_k\\
+&\sum_{k\geq 2m+1}\left(2( i a_{m,k,k}-i \overline{a}_{m,k,k}+1)v_{m} |v_k|^2+((1-2(k-m)^2\epsilon^{\alpha})i a_{k,2k-m,k}+1)v_k^2\overline{v}_{2k-m}\right)\\
+&\sum_{k\geq 2m+1}\sum_{j \geq k+1}2 ((1-2(j-m)(k-m)\epsilon^{\alpha})i a_{k,j+k-m,j}+1) v_{k} \overline{v}_{j+k-m} v_j.
\end{split}
\end{equation*}

\end{prop}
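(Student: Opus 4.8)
The plan is to compute $\{\mathcal{F}_m, \mathcal{H}_0^{m,\alpha,\epsilon}\}(v)$ explicitly on Fourier modes and organize the resulting cubic expression into a "low-frequency" part, involving only modes $v_k$ with $k \leq 2m$, and a "high-frequency" remainder indexed by $k \geq 2m+1$. First I would recall that $\mathcal{H}_0^{m,\alpha,\epsilon}$ splits as a quadratic form: the terms $\frac{\epsilon^\alpha}{2}\|\partial_x v\|^2 + \frac{1-m^2\epsilon^\alpha}{2}\|v\|_{L^2}^2$ are diagonal in Fourier modes, contributing a factor $\big(1 + (n^2 - m^2)\epsilon^\alpha\big)$ to the $n$-th mode, while the off-diagonal piece $\frac{1}{2}\int_{\mathbb{S}^1}\mathrm{Re}(e^{2imx}\overline{v}^2)$ couples mode $n$ to mode $2m-n$ (this is exactly the Hankel term $H_{e^{2imx}}$). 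Using the Poisson bracket formula $\{F,G\} = \frac{2}{i}\sum_k (\partial_{\overline{v}_k}F\,\partial_{v_k}G - \partial_{\overline{v}_k}G\,\partial_{v_k}F)$ and the fact that $\mathcal{F}_m(v) = \sum_{j-l+k=m}\mathrm{Re}(a_{j,l,k}v_j\overline{v}_l v_k)$ is cubic and real-valued (so $a_{k,l,j} = a_{j,l,k}$), the bracket $\{\mathcal{F}_m, \mathcal{H}_0^{m,\alpha,\epsilon}\}$ becomes a sum over index configurations $j - l + k = m$ weighted by $ia_{j,l,k}$ times the appropriate eigenvalue differences coming from the diagonal part, plus cross terms from the Hankel part that replace $\overline{v}_l$ by $v_{2m-l}$.

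Next I would add $\mathcal{H}_1^m(v) = \mathrm{Re}\int_{\mathbb{S}^1}e^{-imx}|v|^2 v = \mathrm{Re}\sum_{j-l+k=m}v_j\overline{v}_l v_k$, so that each cubic monomial $v_j\overline{v}_l v_k$ with $j-l+k=m$ acquires a coefficient of the schematic form $ia_{j,l,k}\big(1 + (\text{quadratic in frequencies})\epsilon^\alpha\big) + (\text{Hankel cross term}) + 1$. The strategy is then a case split according to whether the "interior" indices of the monomial all lie in $\{0,\dots,2m\}$ or whether at least one is $\geq 2m+1$; the asymmetry is forced by the Hankel coupling $l \leftrightarrow 2m-l$, which maps low indices to low indices but sends a high index outside $[0,2m]$ to a negative (hence zero) mode. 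For the monomials where every relevant index is $\leq 2m$ — equivalently those landing in $\mathrm{Reson}^{low}$ — there is no hope (and no need) to choose $a_{j,l,k}$ cancelling them: they become part of the finite-dimensional remainder $\mathcal{R}_m$. This is where the term $\sum_{0\leq j,k\leq 2m}c_{j,j+k-m,k}v_j\overline{v}_{j+k-m}v_k$ comes from, with $c$ determined by $a$ and the frequencies, after one verifies the symmetry $c_{j,j+k-m,k}=c_{k,j+k-m,j}$ (inherited from $a_{j,l,k}=a_{k,l,j}$).

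For the high-frequency monomials I would enumerate the distinct shapes compatible with $j-l+k=m$ and some index $\geq 2m+1$: the four families appearing in the statement correspond to (i) one low index $j<m$ paired with two high indices via $l = m+k-j$; (ii) its Hankel partner obtained by $j \mapsto 2m-j$; (iii) the "diagonal" cases $v_m|v_k|^2$ and $v_k^2\overline{v}_{2k-m}$; and (iv) the generic high-high-high monomial with $j \geq k+1$. In each family the monomial's total coefficient is linear in the unknowns $a$, of the form $(1 \pm 2(\cdots)\epsilon^\alpha)\,i a_{\bullet} \mp i\overline{a}_{\bullet} + 1$ (with the conjugate $\overline{a}$ entering because $\mathrm{Re}$ of a product reshuffles which variable is barred). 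Reading off exactly these linear combinations gives the four displayed expressions; the point of the proposition is just to record them, so that in the companion argument one can choose the $a$'s to make each bracket vanish whenever the corresponding frequency resonance $1 + (\cdots)\epsilon^\alpha \ne 0$ holds.

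The main obstacle is bookkeeping rather than conceptual: one must carefully track, for each of the (finitely many) combinatorial shapes of a cubic monomial with $j-l+k=m$, (a) the eigenvalue factor $1+(n^2-m^2)\epsilon^\alpha$ attached to each of the three slots by the diagonal part of $\mathcal{H}_0^{m,\alpha,\epsilon}$, (b) the extra contribution from the Hankel term which substitutes $v_{2m-l}$ for $(1+(l^2-m^2)\epsilon^\alpha)\overline{v}_l$ and can either stay low or drop out, and (c) the multiplicity factors ($2$ or $1$) coming from symmetrization of $\mathcal{H}_1^m$ and from coincidences among indices (e.g. the $v_m|v_k|^2$ case). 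The subtlety that must not be overlooked is that the Poisson bracket with a \emph{real} cubic functional produces both $a$ and $\overline{a}$ terms, so the homological coefficients are genuinely $\mathbb{C}$-linear combinations of $a_{\bullet}$ and $\overline{a}_{\bullet}$ — which is precisely why the cancellation in the companion result requires solving a $2\times 2$ linear system in $\mathrm{Re}\,a,\mathrm{Im}\,a$ rather than a single scalar equation, and why one must check the denominators $1+(\cdots)\epsilon^\alpha$ are bounded away from zero. I would defer the fully explicit verification of the $\alpha=0$ specialization to the Appendix as the paper indicates, and here simply carry out the general computation to the point where the four sums above are visibly what remains.
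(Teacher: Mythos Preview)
Your proposal is correct and follows essentially the same approach as the paper. The paper's Appendix carries out exactly the computation you outline (in the special case $\alpha=0$, the general case being identical up to the eigenvalue factor $1+(n^2-m^2)\epsilon^\alpha$): it writes out $\partial_{v_n}\mathcal{H}_0^m$ and $\partial_{\overline{v}_n}\mathcal{F}_m$, applies the Poisson bracket formula to obtain four pieces $\mathcal{A}_1,\mathcal{A}_2,\mathcal{A}_3,\mathcal{A}_4$ (your ``diagonal'' and ``Hankel cross'' contributions), splits each into a low part indexed by $j,k\leq 2m$ and a high part with some index $\geq 2m+1$, does the same for $\mathcal{H}_1^m$, and then collects coefficients of the four monomial shapes you listed to read off $\mathrm{Reson}^{\geq 2m+1}$.
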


\noindent The term $\mathrm{Reson}^{low}$ depends only on the small Fourier modes $v_1, v_2, \cdots,v_{3m}$. We try to find a bounded sequence $(a_{j,l,k})_{j-l+k=m}$ such that $\mathrm{Reson}^{\geq 2m+1} = 0$ in order to cancel the term $\mathcal{H}^m_{1 }$. However, the coefficient $(1-2(j-m)(k-m)\epsilon^{\alpha})$ in front of the parameter $a_{k,j+k-m,j}$ may have an arbitrarily small absolute value if $\alpha >0$. Such sequence does not exist if $ \epsilon^{-\alpha} \in 2\mathbb{N}\bigcap [2(m+1)^2, +\infty)$.\\

\noindent  We suppose that $\epsilon^{-\alpha} \notin \mathbb{Q}$, then $\mathrm{Reson}^{\geq 2m+1} = 0$ is equivalent to a linear system, which has a unique solution
\begin{equation}\label{Solution of linear system of all a _kln}
\begin{cases}
a_{2m-j,m+k-j,k} = a_{k,m+k-j,2m-j} = \frac{i(k-j)}{(m-j)(1-2(k-m)(k-j)\epsilon^{\alpha})}, \quad \forall 0\leq j \leq m-1,\\
a_{j,k,k+m-j}=a_{k+m-j,k,j} = \frac{i(m-k)}{(m-j)(1-2(k-m)(k-j)\epsilon^{\alpha})}, \quad\quad\quad \forall 0\leq j \leq m-1,\\
a_{m,k,k}=a_{k,k,m}=\frac{i}{2},\\
a_{k,k+j-m,j}=a_{j,k+j-m,k} = \frac{i}{1-2(j-m)(k-m)\epsilon^{\alpha}},\quad \forall j \geq 2m+1,
\end{cases}
\end{equation}for all $k \geq 2m+1$. In the case $m=0$, $(\ref{Solution of linear system of all a _kln})$ has only the last two formulas. When $\alpha > 0$, the sequence $(a_{j,l,k})_{j-l+k=m}$ can be arbitrarily large, for $0<\epsilon \ll1$. We suppose that $\epsilon^{\alpha}$ is an irrational algebraic number of degree $d\geq 2$. Then we have the Liouville estimate $[\mathbf{\ref{Liouville diophantine approximation}}]$
\begin{equation*}
|a_{j,j+k-m,k}| \leq \frac{1}{c_{\epsilon,\alpha}} (2(j-m)(k-m))^{d-1}, \qquad \forall j,k \geq 2m+1,
\end{equation*}which loses the regularity of $v$ in the estimate of $X_{F_m}(v)$. It is difficult to find the same kind of estimate for the transcendental numbers, which can preserve the regularity. So we return to the case $\alpha=0$.

\subsection{Long time $H^s$-stability in the case $\alpha=0$}
\noindent For $\alpha=0$ and every $m \in \mathbb{N}$ and $s \geq 1$, assume that $u$ is the smooth solution of the NLS-Szeg\H{o} equation 
\begin{equation*}
i \partial_t u + \partial_x^2 u = \Pi(|u|^2 u), \qquad
u(0,x)= e^{imx}+\epsilon f(x), \qquad \|f\|_{H^s} \leq 1,
\end{equation*}and $u(t,x)=e^{i\arg u_m(t)}(e^{imx}+\epsilon v(t,x))$. Then $v$ is the solution of the following Hamiltonian equation
\begin{equation*}
\begin{split}
& i \partial_t v + \partial_x^2 v - H_{e^{2imx}}(v) -(1-m^2+ \epsilon  \varphi'(t)) v\\
 = & \varphi'(t)e^{imx} +  \epsilon\Pi(e^{-imx}v^2 + 2e^{imx} |v|^2) + \epsilon^{2} \Pi(|v|^2v).\\
\end{split}
\end{equation*}Its energy functional is
\begin{equation*}
\begin{split}
\mathcal{H}^{m,\epsilon}(t,v)=H_0^{m}(v) + \varphi'(t) \mathcal{L}_m(v) + \epsilon (\mathcal{H}_{1}^m(v)+\frac{\varphi'(t)}{2}\mathcal{N}_2(v))+\frac{\epsilon^{2}}{4}\mathcal{N}_{4}(v),
\end{split}
\end{equation*}with
\begin{equation*}
\begin{cases}
\mathcal{H}_0^{m}(v) = \frac{1}{2}\|\partial_x v\|_{L^2}^2 + \frac{1-m^2 }{2}\|v\|_{L^2}^2 + \frac{1}{2}\int_{\mathbb{S}^1}\mathrm{Re}(e^{2imx} \overline{v}^2) \mathrm{d}x, \\
\mathcal{L}_{m}( v)= \mathrm{Re} v_m,\\
\mathcal{H}_{1}^m( v) = \mathrm{Re}(  \int_{\mathbb{S}^1} e^{-imx} |v|^2v)\mathrm{d}x,\\
\mathcal{N}_p(v)= \|v\|_{L^p}^p, \qquad p=2 \quad \mathrm{or} \quad 4.
\end{cases}
\end{equation*}We define $\tilde{\mathcal{N}}_2(v):= \|v-\mathbb{P}_{2m} v\|_{L^2}^2=\sum_{n\geq 2m+1} |v_n|^2$ and the following proposition holds.

\begin{prop}\label{homological equation alpha =0 prop}
For every $s>\frac{1}{2}$ and $m \in \mathbb{N}$, there exists a sequence $(a_{j,l,k})_{j-l+k=m}$ such that $a_{j,l,k}=a_{k,l,j}$, $\sup_{ m \geq 1}\sup_{j-l+k=m}|a_{j,l,k}|=\frac{1}{2}$ and the functional ${\mathcal{F}_m} : H^s_+ \to \mathbb{R}$, defined by
\begin{equation*}
{\mathcal{F}_m}(v)=\sum_{\begin{smallmatrix}
 j-l+k=m\\ j,k,l \in \mathbb{N}
\end{smallmatrix}}\mathrm{Re}(a_{j,l,k}v_j \overline{v}_l v_k),\qquad \forall v\in C^{\infty}_+,
\end{equation*}satisfy that $\{\mathcal{F}_m, \mathcal{L}_{m}\}=-\tilde{\mathcal{N}}_2$ and  $\mathcal{R}_m :=\{\mathcal{F}_m, \mathcal{H}_0^{m}\} +\mathcal{H}^m_1$ is a finite sum of the Fourier modes $v_1, \cdots, v_{3m}$. Moreover, for all $v,h \in  H^s_+$, we have
\begin{equation*}
\|X_{\mathcal{F}_m}(v) \|_{H^s}\lesssim_{m,s} \|v\|_{H^s}^2,  \qquad\|\mathrm{d}X_{\mathcal{F}_m}(v) h\|_{H^s}\lesssim_{m,s} \|v\|_{H^s}\|h\|_{H^s}.
\end{equation*}
\end{prop}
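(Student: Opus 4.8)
The statement is the specialization to $\alpha=0$ of Proposition \ref{homological equation for all alpha}: with $\epsilon^{\alpha}$ replaced by $1$, the identity $\{\mathcal{F}_m,\mathcal{H}_0^{m}\}+\mathcal{H}^m_1=\mathrm{Re}\big(\mathrm{Reson}^{low}(v)+\mathrm{Reson}^{\geq 2m+1}(v)\big)$ holds for every finitely-supported $v$, and $\mathrm{Reson}^{low}$ only involves the modes $v_0,\dots,v_{3m}$. So the plan is: first invoke that identity (or re-derive it by expanding $\{\mathcal{F}_m,\mathcal{H}_0^{m}\}$ in Fourier coordinates using $\{F,G\}=\tfrac{2}{i}\sum_k(\partial_{\overline{v}_k}F\,\partial_{v_k}G-\partial_{\overline{v}_k}G\,\partial_{v_k}F)$, which splits into a diagonal part sending each monomial $v_j\overline{v}_lv_k$ with $j-l+k=m$ to $i(1+j^2-l^2+k^2-m^2)\,a_{j,l,k}\,v_j\overline{v}_lv_k$, plus the Hankel contribution of $\tfrac12\int\mathrm{Re}(e^{2imx}\overline{v}^2)$, which couples the coefficient at a triple $(j,l,k)$ to the one at a reflected triple such as $(2m-j,m+k-j,k)$, while $\mathcal{H}_1^m$ adds a constant $1$ to each such coefficient). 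Then I must supply the two things not contained in the statement of Proposition \ref{homological equation for all alpha}: that the homological linear system actually has a \emph{bounded} solution when $\alpha=0$, and the regularity estimates on $X_{\mathcal{F}_m}$.

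For the first point, the requirement that $\mathcal{R}_m$ be supported on finitely many modes is exactly $\mathrm{Reson}^{\geq 2m+1}(v)=0$. Reading this off the explicit formula for $\mathrm{Reson}^{\geq 2m+1}$, it decouples, for each large output frequency, into scalar equations and $2\times2$ blocks whose determinants are the numbers $1-2(j-m)(k-m)$ and $1-2(k-m)(k-j)$ (together with a benign prefactor $m-j$). The key point, specific to $\alpha=0$: these never vanish, because $j-m,k-m\geq m+1\geq 1$ forces $2(j-m)(k-m)\geq 2$, hence $|1-2(j-m)(k-m)|\geq 1$; and in the mixed family $0\leq j\leq m-1\leq k-2m$ one has $k-j\geq m+2$, so again $2(k-m)(k-j)\geq 2$. (This is precisely where, for $0<\alpha<2$, the denominators $1-2(j-m)(k-m)\epsilon^{\alpha}$ can be arbitrarily small — see the discussion preceding the statement — and the method collapses.) One then takes the unique solution $\eqref{Solution of linear system of all a _kln}$ with $\epsilon^{\alpha}=1$, checks $a_{j,l,k}=a_{k,l,j}$ directly, and verifies the uniform bound $|a_{j,l,k}|\leq\tfrac12$ by a short case analysis over the four families: using $k-m\geq m+1\geq 2$ and $m-j\geq 1$ one gets $|a|\leq\tfrac{1}{2m+1}\leq\tfrac13$ for the three "genuinely mixed'' families and $|a|\leq\tfrac17$ for $a_{k,k+j-m,j}$ with $j,k\geq 2m+1$ (since then $(j-m)(k-m)\geq(m+1)^2\geq 4$), while $a_{m,k,k}=\tfrac{i}{2}$ is the only coefficient of modulus exactly $\tfrac12$, which is where the supremum is attained.

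For the second condition, a direct computation gives $\{\mathcal{F}_m,\mathcal{L}_m\}(v)=2\,\mathrm{Im}\,\partial_{\overline{v}_m}\mathcal{F}_m(v)$, and $\partial_{\overline{v}_m}\mathcal{F}_m$ picks out the $v_m$-monomials: the terms $a_{m,k,k}|v_k|^2$ and the terms $a_{j,m,k}v_jv_k$ with $j+k=2m$. Since $\mathrm{Im}(a_{m,k,k})=\tfrac12$ for $k\geq 2m+1$ (forced by the vanishing of the $v_m|v_k|^2$-coefficient in $\mathrm{Reson}^{\geq 2m+1}$, and realized with $a_{m,k,k}=\tfrac{i}{2}$ so as not to exceed the bound $\tfrac12$), these contribute exactly $-\sum_{k\geq 2m+1}|v_k|^2=-\tilde{\mathcal{N}}_2(v)$; the remaining coefficients $a_{m,k,k}$ ($1\leq k\leq 2m$) and $a_{j,m,k}$ ($j+k=2m$) are not constrained by $\mathrm{Reson}^{\geq 2m+1}=0$ and are set to $0$ (real), which kills their contribution to $\{\mathcal{F}_m,\mathcal{L}_m\}$ and is compatible with $|a|\leq\tfrac12$. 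With $(a_{j,l,k})$ so chosen, $\mathcal{R}_m=\{\mathcal{F}_m,\mathcal{H}_0^m\}+\mathcal{H}^m_1=\mathrm{Re}\,\mathrm{Reson}^{low}$ is a finite sum in $v_0,\dots,v_{3m}$. Finally the estimates on $X_{\mathcal{F}_m}$ are the exact analogue of Lemma \ref{explicit formula of F}: $\mathcal{F}_m$ is a cubic monomial-sum with coefficients bounded by $\tfrac12$ and supported on $\{j-l+k=m\}$, so the Fourier coefficients of $X_{\mathcal{F}_m}(v)$ and of $\mathrm{d}X_{\mathcal{F}_m}(v)h$ are, respectively, quadratic and bilinear convolution-type expressions in $v,\overline{v}$ and $h,\overline{h}$; Young's inequality $\ell^1*\ell^2\hookrightarrow\ell^2$ together with $H^s\hookrightarrow\ell^1$ of Fourier coefficients for $s>\tfrac12$ yields $\|X_{\mathcal{F}_m}(v)\|_{H^s}\lesssim_{m,s}\|v\|_{H^s}^2$ and $\|\mathrm{d}X_{\mathcal{F}_m}(v)h\|_{H^s}\lesssim_{m,s}\|v\|_{H^s}\|h\|_{H^s}$, the $m$-dependence entering only through the finitely many low-frequency coefficients and the shift $m$.

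The main obstacle is the bookkeeping hidden in the first paragraph: organizing $\{\mathcal{F}_m,\mathcal{H}_0^m\}$ so that the Hankel term $H_{e^{2imx}}$ — which mixes $a_{j,l,k}$ with a reflected coefficient — is tracked correctly, and checking that the resulting high-frequency linear system is exactly the one inverted in $\eqref{Solution of linear system of all a _kln}$, together with the finite but fiddly case analysis verifying $|a_{j,l,k}|\leq\tfrac12$ on all four families and the compatibility of $\eqref{Solution of linear system of all a _kln}$ with $\{\mathcal{F}_m,\mathcal{L}_m\}=-\tilde{\mathcal{N}}_2$. Once the coefficients are in hand and bounded, the $X_{\mathcal{F}_m}$ estimates and the conclusion that $\mathcal{R}_m$ is finitely supported are routine.
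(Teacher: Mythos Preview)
Your proposal is correct and follows essentially the same approach as the paper: invoke the general identity of Proposition~\ref{homological equation for all alpha} at $\alpha=0$, solve $\mathrm{Reson}^{\geq 2m+1}=0$ via~\eqref{Solution of linear system of all a _kln}, set the unconstrained low-frequency coefficients to zero, verify $\{\mathcal{F}_m,\mathcal{L}_m\}=-\tilde{\mathcal{N}}_2$ from the imaginary part of $a_{m,k,k}$, and deduce the $H^s$ bounds on $X_{\mathcal{F}_m}$ by Young's inequality exactly as in Lemma~\ref{explicit formula of F}. Your explicit case analysis for $|a_{j,l,k}|\le\tfrac12$ (with the maximum attained at $a_{m,k,k}=\tfrac{i}{2}$) supplies the detail the paper only asserts, and your observation that the denominators $1-2(j-m)(k-m)$ and $1-2(k-m)(k-j)$ have modulus $\ge 1$ precisely when $\alpha=0$ is exactly the mechanism behind the solvability.
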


\begin{proof}
For the convenience of the reader, the detailed calculus for $\mathcal{R}_m=\{\mathcal{F}_m, \mathcal{H}_0^{m}\} +\mathcal{H}^m_1$ and formula $(\ref{Solution of linear system of all a _kln})$ in the case $\alpha=0$ are given in $\mathbf{Appendix}$. We define $a_{j,j+k-m,k}=0$, for all $0\leq j,k \leq 2m$ and $a_{n,m+1+n,2m+1}=a_{2m+1,m+1+n,n}=0$, for all $0\leq n \leq m-1$. Combing $\mathbf{Proposition}$ $\mathbf{\ref{homological equation for all alpha}}$ and $(\ref{Solution of linear system of all a _kln})$ with $\alpha=0$, we have
\begin{equation*}
\begin{cases}
\mathrm{Reson}^{\geq 2m+1}(v) \Big|_{\alpha=0}=0, \qquad \mathrm{Re} \left(\mathrm{Reson}^{low}(v)\Big|_{\alpha=0} \right) = \mathcal{R}_m(v),\\
\{\mathcal{F}_m, \mathcal{L}_m \}(v) = 2 \mathrm{Im}(\sum_{k\geq 0} \overline{a}_{m,k,k}|v_k|^2 + \frac{1}{2}\sum_{j+k=2m}a_{j,m,k}v_j v_k) =-\sum_{n\geq 2m+1} |v_n|^2.
\end{cases}
\end{equation*}By $(\ref{Solution of linear system of all a _kln})$ with $\alpha=0$, we have $|a_{j,j+k-m,k}|\leq \frac{1}{2}$, for all $j,k \geq 0$. By the definition of $\mathcal{F}_m$, we have 
\begin{equation*}
\begin{cases}
\widehat{[X_{\mathcal{F}_m}(v)]}(n)= -2i\sum_{k-l+n=m}\overline{a}_{k,l,n} \overline{v}_k v_l -i\sum_{k-n+l=m}a_{k,n,l} v_k v_l\\
\widehat{[\mathrm{d} X_{\mathcal{F}_m}(v)h]}(n)= -2i\sum_{k-l+n=m}\overline{a}_{k,l,n} (\overline{v}_k h_l +v_l \overline{h}_k) -2i\sum_{k-n+l=m}a_{k,n,l} v_k h_l,
\end{cases}, \forall n \geq 0 .
\end{equation*}The last two estimates are obtained by Young's convolution inequality for $l^1 * l^2 \hookrightarrow l^2$.

\end{proof}

\subsubsection{The Birkhoff normal form}
\noindent Set $\chi^m_{\sigma}:=\exp(\epsilon \sigma X_{\mathcal{F}_m})$ the Hamiltonian flow of $\epsilon  {\mathcal{F}_m}$, i.e.,
\begin{equation*}
\frac{\mathrm{d}}{\mathrm{d}\sigma} \chi^m_{\sigma}(v)=\epsilon X_{\mathcal{F}_m}(\chi^m_{\sigma}(v)), \qquad \chi^m_{0}(v)=v.
\end{equation*}We perform the canonical transformation $\Psi_{m,\epsilon}:=\chi^m_1 = \exp(\epsilon X_{\mathcal{F}_m})$. We want to reduce the energy functional $\mathcal{H}^{m,\epsilon}$ to the following norm 
\begin{equation*}
\mathcal{H}^{m,\epsilon}(t) \circ\Psi_{m,\epsilon}= \mathcal{H}^m_0+ \varphi'(t) \mathcal{L}_m(v) + \epsilon \left(\mathcal{R}_m + \varphi'(t) (-\tilde{\mathcal{N}}_2+\frac{\mathcal{N}_2}{2}) \right)+ \mathcal{O}(\epsilon^2).
\end{equation*}Since $\mathcal{R}_m$ depends only on low frequency Fourier modes $v_1, \cdots, v_{3m}$, the high-frequency filtering $H^s$ norm of the solution of $\partial_t w(t) = X_{\mathcal{H}^{m,\epsilon}(t) \circ\Psi_{m,\epsilon}}(w(t))$ is handled by the Birkhoff normal form transformation. The estimate of $\|\mathbb{P}_{3m}(v)\|_{H^s}$ is given by $(\ref{uniform estimate of v})$. The next lemma will give the local existence of $\chi^m_{\sigma}$, for $|\sigma|\leq 1$ and estimate the difference between $v$ and $\Psi_{m,\epsilon}^{-1}(v)$.

\begin{lem}\label{estimates of hamiltonian flow of epsilon 1 F orbital stability m}
For every $s>\frac{1}{2}$ and $m \in \mathbb{N}$, there exist two constants $\gamma_{m,s}, C_{m,s}>0$ such that for all $v\in H^s_+$, if  $\epsilon  \|v\|_{H^s} \leq \gamma_{m,s}$, then $\chi^m_{\sigma}(v)$ is well defined on the interval $[-1,1]$ and the following estimates hold:
\begin{equation*}
\begin{split}
&\sup_{\sigma\in [-1,1]}\|\chi^m_{\sigma}(v)\|_{H^s} \leq 2\|v\|_{H^s},\\
&\sup_{\sigma\in [-1,1]}\|\chi^m_{\sigma}(v)-v\|_{H^s} \leq C_{m,s}  \epsilon  \|v\|^2_{H^s},\\
&\|\mathrm{d}\chi^m_{\sigma}(v)\|_{B(H^s)} \leq \exp(C_{m,s}\epsilon \|v\|_{H^s}|\sigma|),\qquad \forall \sigma \in [-1,1].
\end{split}
\end{equation*}
\end{lem}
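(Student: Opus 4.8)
The plan is to repeat, almost verbatim, the argument used to prove $\mathbf{Lemma}$ $\mathbf{\ref{estimates of hamiltonian flow of epsilon 2 F}}$, with two changes of bookkeeping: here $\mathcal{F}_m$ is homogeneous of degree $3$, so its Hamiltonian vector field $X_{\mathcal{F}_m}$ is homogeneous of degree $2$, and the small parameter multiplying the flow is $\epsilon$ instead of $\epsilon^{2}$. Accordingly, one applies the bootstrap lemma with exponent $2$ rather than $3$, which is exactly why the a priori bound reads $\|\chi^m_\sigma(v)\|_{H^s}\leq 2\|v\|_{H^s}$: the factor $\tfrac{m}{m-1}$ appearing in $\mathbf{Lemma}$ $\mathbf{\ref{bootstrap}}$ equals $2$ when the exponent is $2$.

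First I would recall from $\mathbf{Proposition}$ $\mathbf{\ref{homological equation alpha =0 prop}}$ the two estimates $\|X_{\mathcal{F}_m}(v)\|_{H^s}\leq C_{m,s}\|v\|_{H^s}^{2}$ and $\|\mathrm{d}X_{\mathcal{F}_m}(v)h\|_{H^s}\leq C_{m,s}\|v\|_{H^s}\|h\|_{H^s}$, valid for $v,h\in H^s_+$ and $s>\tfrac12$. On the ball $\{\|v\|_{H^s}\leq\gamma_{m,s}/\epsilon\}$ the vector field $v\mapsto\epsilon X_{\mathcal{F}_m}(v)$ is therefore Lipschitz with constant $\leq C_{m,s}\gamma_{m,s}$, so by the Picard-Lindelöf theorem the Hamiltonian flow $(\sigma,v)\mapsto\chi^m_\sigma(v)$ exists on a maximal interval $(-\sigma^{\ast},\sigma^{\ast})$ and satisfies the integral identity
\[
\chi^m_\sigma(v)=v+\epsilon\int_0^{\sigma} X_{\mathcal{F}_m}(\chi^m_\tau(v))\,\mathrm{d}\tau,\qquad |\sigma|<\sigma^{\ast}.
\]

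Next, supposing for contradiction that $\sigma^{\ast}<1$, this identity together with the quadratic bound on $X_{\mathcal{F}_m}$ gives, for $0\leq\sigma<\sigma^{\ast}$,
\[
\sup_{0\leq\tau\leq\sigma}\|\chi^m_\tau(v)\|_{H^s}\leq\|v\|_{H^s}+C_{m,s}\,\epsilon\,\sigma\sup_{0\leq\tau\leq\sigma}\|\chi^m_\tau(v)\|_{H^s}^{2},
\]
to which I would apply $\mathbf{Lemma}$ $\mathbf{\ref{bootstrap}}$ with $M(\sigma)=\sup_{0\leq\tau\leq\sigma}\|\chi^m_\tau(v)\|_{H^s}$, exponent $2$, $a=M(0)=\|v\|_{H^s}$ and $b=C_{m,s}\epsilon$. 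Choosing $\gamma_{m,s}$ small enough that $\epsilon\|v\|_{H^s}\leq\gamma_{m,s}$ forces $2bM(0)\leq1$ and $2ba\leq\tfrac12$, the lemma yields $\sup_{|\sigma|\leq\sigma^{\ast}}\|\chi^m_\sigma(v)\|_{H^s}\leq2\|v\|_{H^s}$, contradicting the blow-up criterion; hence $\sigma^{\ast}\geq1$ and the first estimate holds on $[-1,1]$. The second estimate follows at once by inserting the a priori bound into the integral identity, $\|\chi^m_\sigma(v)-v\|_{H^s}\leq|\sigma|\epsilon\sup_{|\tau|\leq1}\|X_{\mathcal{F}_m}(\chi^m_\tau(v))\|_{H^s}\leq C_{m,s}\epsilon\|v\|_{H^s}^{2}$. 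For the third, I would differentiate the integral identity in the initial condition, obtaining $\mathrm{d}\chi^m_\sigma(v)=\mathrm{Id}_{H^s}+\epsilon\int_0^{\sigma}\mathrm{d}X_{\mathcal{F}_m}(\chi^m_\tau(v))\circ\mathrm{d}\chi^m_\tau(v)\,\mathrm{d}\tau$, bound $\|\mathrm{d}X_{\mathcal{F}_m}(\chi^m_\tau(v))\|_{B(H^s)}\leq C_{m,s}\|\chi^m_\tau(v)\|_{H^s}\leq 2C_{m,s}\|v\|_{H^s}$, and conclude by Gr\"onwall's inequality after relabeling the constant. The only point that requires genuine care is that $X_{\mathcal{F}_m}$ is merely quadratic, hence not globally Lipschitz on $H^s_+$: the Picard-Lindelöf step and the bootstrap must both be confined to the ball of radius $\gamma_{m,s}/\epsilon$, with $\gamma_{m,s}$ and $C_{m,s}$ fixed once and for all in terms of the constants furnished by $\mathbf{Proposition}$ $\mathbf{\ref{homological equation alpha =0 prop}}$ and $\mathbf{Lemma}$ $\mathbf{\ref{bootstrap}}$.
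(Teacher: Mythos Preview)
Your proposal is correct and follows exactly the approach the paper intends: the paper's own proof is a one-line reference to $\mathbf{Lemma}$~$\mathbf{\ref{estimates of hamiltonian flow of epsilon 2 F}}$, invoking $\mathbf{Lemma}$~$\mathbf{\ref{bootstrap}}$ with exponent $m=2$, which is precisely what you have spelled out in detail. Your observation that the factor $\tfrac{m}{m-1}=2$ at exponent $2$ accounts for the constant $2$ in the first estimate (versus $\tfrac32$ in the cubic case) is exactly the point.
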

\noindent The proof is based on a bootstrap argument, which is similar to $\mathbf{Lemma}$ $\mathbf{\ref{estimates of hamiltonian flow of epsilon 2 F}}$, given by $\mathbf{Lemma}$ $\mathbf{\ref{bootstrap}}$ with $m=2$. We shall perform the canonical transform below. Recall that $\Psi_{m,\epsilon}= \chi_1^m$.
\begin{lem}\label{Birkhoff normal form transformation for travelling wave alpha=0}
For all $s>\frac{1}{2}$, $m \in \mathbb{N} $ and $0<\epsilon<\epsilon^*_m$, there exists a smooth mapping $\mathcal{Y}_m : \mathbb{R}\times H^s_+ \longrightarrow H^s_+$ and a constant $C'_{m,s} >0$ such that for all $t \in \mathbb{R}$, we have
\begin{equation*}
X_{\mathcal{H}^{m,\epsilon}(t)\circ \Psi_{m,\epsilon}}=X_{\mathcal{H}^{m}_0}+\varphi'(t) X_{\mathcal{L}_m}+\epsilon \left( X_{\mathcal{R}_{m} }+\varphi'(t)(-X_{\tilde{\mathcal{N}}_2}+\frac{1}{2}X_{\mathcal{N}_2} ) \right)+\epsilon^{2} \mathcal{Y}_m(t),
\end{equation*}and $\sup_{t\in \mathbb{R}}\|\mathcal{Y}_m(t,v)\|_{H^s} \leq  C'_{m,s}\|v\|^2_{H^s}(1+\|v\|_{H^s})$, for all $v\in H^s_+$ such that $\epsilon \|v\|_{H^s}\leq \gamma_{m,s}$. Set $w(t):=\Psi_{m,\epsilon}^{-1}(v(t))$, then we have $\partial_t w(t)=X_{\mathcal{H}^{m,\epsilon}(t)\circ \Psi_{m,\epsilon}}(w(t))$ and 
\begin{equation}\label{estimates of w(t) epsilon alpha=0}
\Big|\frac{\mathrm{d}}{\mathrm{d}t} \|w(t)-\mathbb{P}_{3m}(w(t))\|_{H^s}^2 \Big| \leq C'_{m,s} \epsilon^{2} \|w(t)\|^3_{H^s}(1+\|w(t)\|_{H^s}), 
\end{equation}if $\epsilon \|w(t)\|_{H^s}\leq \gamma_{m,s}$.
\end{lem}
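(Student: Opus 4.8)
The proof follows the scheme of $\mathbf{Lemma}$ $\mathbf{\ref{Birkhoff normal form transformation}}$, adjusted to the non-autonomous Hamiltonian $\mathcal{H}^{m,\epsilon}(t,\cdot)$ and to the fact that the normal form only filters the Fourier modes of index $\geq 3m+1$. Write $\mathcal{H}^{m,\epsilon}(t) = \mathcal{G}_0(t) + \epsilon\,\mathcal{G}_1(t) + \epsilon^2 \mathcal{G}_2$ with $\mathcal{G}_0(t) = \mathcal{H}_0^m + \varphi'(t)\mathcal{L}_m$, $\mathcal{G}_1(t) = \mathcal{H}_1^m + \tfrac{\varphi'(t)}{2}\mathcal{N}_2$ and $\mathcal{G}_2 = \tfrac14\mathcal{N}_4$. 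Since $\Psi_{m,\epsilon} = \chi_1^m$ does not depend on $t$, I would expand $\mathcal{H}^{m,\epsilon}(t)\circ\chi_1^m$ by Taylor's formula in $\sigma$ to second order around $\sigma=0$, using $\chi_0^m = \mathrm{Id}_{H^s_+}$ and $\tfrac{\mathrm{d}}{\mathrm{d}\sigma}[G\circ\chi_\sigma^m] = \epsilon\{\mathcal{F}_m,G\}\circ\chi_\sigma^m$, to obtain
\begin{equation*}
\mathcal{H}^{m,\epsilon}(t)\circ\chi_1^m = \mathcal{G}_0(t) + \epsilon\big(\{\mathcal{F}_m,\mathcal{G}_0(t)\} + \mathcal{G}_1(t)\big) + \epsilon^2\,\mathcal{Z}_m(t),
\end{equation*}
where $\mathcal{Z}_m(t) = \mathcal{G}_2 + \{\mathcal{F}_m,\mathcal{G}_1(t)\} + \epsilon\{\mathcal{F}_m,\mathcal{G}_2\} + \int_0^1(1-\sigma)\{\mathcal{F}_m,\{\mathcal{F}_m,\mathcal{H}^{m,\epsilon}(t)\}\}\circ\chi_\sigma^m\,\mathrm{d}\sigma$. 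The two homological identities of $\mathbf{Proposition}$ $\mathbf{\ref{homological equation alpha =0 prop}}$, namely $\{\mathcal{F}_m,\mathcal{H}_0^m\}+\mathcal{H}_1^m = \mathcal{R}_m$ and $\{\mathcal{F}_m,\mathcal{L}_m\} = -\tilde{\mathcal{N}}_2$, yield $\{\mathcal{F}_m,\mathcal{G}_0(t)\}+\mathcal{G}_1(t) = \mathcal{R}_m + \varphi'(t)(-\tilde{\mathcal{N}}_2 + \tfrac12\mathcal{N}_2)$, so the $\mathcal{O}(\epsilon)$ term is exactly the one appearing in the statement. Since $G\mapsto X_G$ is linear, taking Hamiltonian vector fields and setting $\mathcal{Y}_m(t) := X_{\mathcal{Z}_m(t)}$ gives the claimed decomposition of $X_{\mathcal{H}^{m,\epsilon}(t)\circ\Psi_{m,\epsilon}}$; smoothness of $\mathcal{Y}_m$ in $(t,v)$ comes from the smoothness of $\varphi'$ and of $\chi_\sigma^m$ together with the polynomial structure.

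For the bound on $\mathcal{Y}_m(t)$, I would exploit the multilinear structure: $\mathcal{F}_m$ is homogeneous of degree $3$, $\mathcal{H}_0^m$ and $\mathcal{N}_2$ of degree $2$, $\mathcal{H}_1^m$ of degree $3$, $\mathcal{N}_4$ of degree $4$, $\mathcal{L}_m$ of degree $1$, all with uniformly bounded coefficients, and $\sup_{t\in\mathbb{R}}|\varphi'(t)|<\infty$ by $\mathbf{Proposition}$ $\mathbf{\ref{proposition to define theta varphi alpha<<1}}$ with $\alpha=0$. Each iterated Poisson bracket entering $\mathcal{Z}_m(t)$ is then a homogeneous functional whose Hamiltonian vector field is controlled in $H^s$ by Young's convolution inequality exactly as in $\mathbf{Lemma}$ $\mathbf{\ref{explicit formula of F}}$; the terms carrying an extra power of $\epsilon$ also carry an extra power of $\|v\|_{H^s}$, which is absorbed through $\epsilon\|v\|_{H^s}\leq\gamma_{m,s}$. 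For the integral remainder I would use the chain rule $X_{G\circ\chi_\sigma^m}(v) = \mathrm{d}\chi_{-\sigma}^m(\chi_\sigma^m(v))\circ X_G(\chi_\sigma^m(v))$ together with $\|\chi_\sigma^m(v)\|_{H^s}\leq 2\|v\|_{H^s}$ and $\|\mathrm{d}\chi_{-\sigma}^m(\chi_\sigma^m(v))\|_{B(H^s)}\leq\exp(2C_{m,s}\epsilon\|v\|_{H^s})\lesssim_{m,s}1$ from $\mathbf{Lemma}$ $\mathbf{\ref{estimates of hamiltonian flow of epsilon 1 F orbital stability m}}$, uniformly in $\sigma\in[-1,1]$. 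The leading contribution has a degree-$2$ or degree-$3$ vector field, and all higher-degree contributions come with matching powers of $\epsilon$, so $\sup_{t\in\mathbb{R}}\|\mathcal{Y}_m(t,v)\|_{H^s}\lesssim_{m,s}\|v\|_{H^s}^2(1+\|v\|_{H^s})$ whenever $\epsilon\|v\|_{H^s}\leq\gamma_{m,s}$.

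The equation for $w(t) := \Psi_{m,\epsilon}^{-1}(v(t))$ follows from $\partial_t w = \mathrm{d}\Psi_{m,\epsilon}^{-1}(v)\,\partial_t v$ and the identity $\mathrm{d}\Psi_{m,\epsilon}^{-1}(v)\,X_{\mathcal{H}^{m,\epsilon}(t)}(v) = X_{\mathcal{H}^{m,\epsilon}(t)\circ\Psi_{m,\epsilon}}(w)$, valid because $\Psi_{m,\epsilon}$ is a time-independent symplectomorphism. Finally, to establish $(\ref{estimates of w(t) epsilon alpha=0})$ I would differentiate $\|w(t)-\mathbb{P}_{3m}w(t)\|_{H^s}^2 = \sum_{n\geq 3m+1}(1+n^2)^s|w_n(t)|^2$ and observe that on the modes $n\geq 3m+1$ the contributions of $X_{\mathcal{H}_0^m}$ (its Hankel part $-iH_{e^{2imx}}$ sends $e^{inx}$ to $e^{i(2m-n)x}$ with $2m-n<0$, hence vanishes there, while the rest is $-i(n^2+1-m^2)w_n$), of $X_{\mathcal{L}_m}$ (supported on mode $m$), of $X_{\mathcal{R}_m}$ (involving only $v_1,\dots,v_{3m}$), and of $X_{\tilde{\mathcal{N}}_2}$ and $X_{\mathcal{N}_2}$ (proportional to $iw_n$) are all purely imaginary multiples of $w_n$ or zero, so they do not affect $\tfrac{\mathrm{d}}{\mathrm{d}t}|w_n|^2$; only the $\epsilon^2\mathcal{Y}_m(t)$ term survives, whence $\big|\tfrac{\mathrm{d}}{\mathrm{d}t}\|w-\mathbb{P}_{3m}w\|_{H^s}^2\big| \leq 2\epsilon^2\|w\|_{H^s}\|\mathcal{Y}_m(t,w)\|_{H^s}\leq C'_{m,s}\epsilon^2\|w\|_{H^s}^3(1+\|w\|_{H^s})$ under $\epsilon\|w(t)\|_{H^s}\leq\gamma_{m,s}$. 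The main technical point is the uniform-in-$(t,\sigma)$ control of the double-bracket remainder composed with $\chi_\sigma^m$, together with checking that the lower-order terms of the transformed vector field genuinely do not move the modes above $3m$; the rest is the bookkeeping of homogeneities already performed in $\mathbf{Lemma}$ $\mathbf{\ref{explicit formula of F}}$ and $\mathbf{Lemma}$ $\mathbf{\ref{estimates of hamiltonian flow of epsilon 1 F orbital stability m}}$.
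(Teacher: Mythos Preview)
Your approach is essentially the same as the paper's: Taylor-expand $\mathcal{H}^{m,\epsilon}(t)\circ\chi_\sigma^m$ in $\sigma$, use the homological identities of $\mathbf{Proposition}$ $\mathbf{\ref{homological equation alpha =0 prop}}$ to identify the $\mathcal{O}(\epsilon)$ term, control the remainder via $\mathbf{Lemma}$ $\mathbf{\ref{estimates of hamiltonian flow of epsilon 1 F orbital stability m}}$ and the chain rule $(\ref{composition symplectic gradient})$, and then check that on modes $n\geq 3m+1$ only the $\epsilon^2\mathcal{Y}_m$ piece survives. Your organization of the remainder (a single second-order expansion of the full $\mathcal{H}^{m,\epsilon}(t)$) differs slightly from the paper, which expands $\mathcal{G}_0(t)$ to second order, $\mathcal{G}_1(t)$ to first order, and leaves $\mathcal{N}_4\circ\chi_1^m$ unexpanded; both are legitimate.

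There is, however, one genuine slip. You write that ``$\mathcal{H}_0^m$ and $\mathcal{N}_2$ [are] of degree $2$ \dots\ all with uniformly bounded coefficients''. This is false for $\mathcal{H}_0^m$: it contains $\tfrac{1}{2}\|\partial_x v\|_{L^2}^2=\tfrac{1}{2}\sum_n n^2|v_n|^2$, whose coefficients are unbounded. Consequently you cannot bound $X_{\{\mathcal{F}_m,\{\mathcal{F}_m,\mathcal{H}_0^m\}\}}$ in $H^s$ by a naive degree count and Young's inequality. The fix, which the paper carries out explicitly, is to use the homological equation once more \emph{inside} the double bracket: since $\{\mathcal{F}_m,\mathcal{H}_0^m\}=\mathcal{R}_m-\mathcal{H}_1^m$ and $\{\mathcal{F}_m,\mathcal{L}_m\}=-\tilde{\mathcal{N}}_2$, one has $\{\mathcal{F}_m,\mathcal{G}_0(t)\}=\mathcal{R}_m-\mathcal{H}_1^m-\varphi'(t)\tilde{\mathcal{N}}_2$, which \emph{does} have uniformly bounded coefficients. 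After this substitution your integral remainder becomes $\int_0^1(1-\sigma)\{\mathcal{F}_m,\mathcal{R}_m-\mathcal{H}_1^m-\varphi'(t)\tilde{\mathcal{N}}_2+\epsilon\{\mathcal{F}_m,\mathcal{G}_1(t)\}+\epsilon^2\{\mathcal{F}_m,\mathcal{G}_2\}\}\circ\chi_\sigma^m\,\mathrm{d}\sigma$, and now every bracket involved is a polynomial with bounded coefficients, so your Young-type estimates go through. This is the same mechanism as in $\mathbf{Lemma}$ $\mathbf{\ref{Birkhoff normal form transformation}}$, where $\{F,H_0\}$ is replaced by $\tilde{R}-R$ before the second bracket is taken. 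With this correction, your argument is complete.
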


\begin{proof}
For every $t \in \mathbb{R}$, we expand the energy $\mathcal{H}^{m,\epsilon}(t)\circ \Psi_{m,\epsilon}=\mathcal{H}^{m, \epsilon}(t)\circ \chi^m_1$ with Taylor's formula at time $\sigma=1$ around $0$. Since $\chi^m_0 = \mathrm{Id}_{H^s_+}$, we have

\begin{equation*}
\begin{split}
&\left(\mathcal{H}^{m}_{0}+ \varphi'(t) \mathcal{L}_m \right)\circ \chi^m_1\\
 = &\mathcal{H}^{m}_0+\varphi'(t) \mathcal{L}_m+ \frac{\mathrm{d}}{\mathrm{d}\sigma} [(\mathcal{H}^{m}_0+\varphi'(t) \mathcal{L}_m)\circ  \chi^m_{\sigma}]\arrowvert_{\sigma=0}+\int_0^1(1-\sigma)\frac{\mathrm{d}^2}{\mathrm{d}\sigma^2}[(\mathcal{H}^{m}_0+\varphi'(t) \mathcal{L}_m)\circ \chi^m_{\sigma}]\mathrm{d}\sigma\\
 =& \mathcal{H}^{m}_0+\varphi'(t) \mathcal{L}_m + \epsilon \{\mathcal{F}_m, \mathcal{H}^{m}_0+\varphi'(t) \mathcal{L}_m\}+\epsilon^2 \int_0^1(1-\sigma)\{\mathcal{F}_m, \{\mathcal{F}_m, \mathcal{H}^{m}_0+\varphi'(t) \mathcal{L}_m \}\}\circ \chi^m_{\sigma}\mathrm{d}\sigma\\
\end{split}
\end{equation*}and 
\begin{equation*}
\begin{split}
\left(\mathcal{H}^{m}_1 +\frac{\varphi'(t)}{2} \mathcal{N}_2\right)\circ \chi^m_1 
=&\mathcal{H}^{m}_1+\frac{\varphi'(t)}{2} \mathcal{N}_2 +\int_0^1 \frac{\mathrm{d}}{\mathrm{d}\sigma}[(\mathcal{H}^{m}_1 +\frac{\varphi'(t)}{2} \mathcal{N}_2)\circ \chi^m_{\sigma}]\mathrm{d}\sigma\\
=& \mathcal{H}^{m}_1+\frac{\varphi'(t)}{2} \mathcal{N}_2 + \epsilon \int_0^1 \{\mathcal{F}_m, \mathcal{H}^{m}_1+\frac{\varphi'(t)}{2} \mathcal{N}_2 \}\circ \chi^m_{\sigma}\mathrm{d}\sigma.
\end{split}
\end{equation*}Since we have the homological system $\begin{cases}
\{\mathcal{F}_m, \mathcal{H}_0^{m}\} +\mathcal{H}^m_1 = \mathcal{R}_m \\
\{\mathcal{F}_m, \mathcal{L}_{m}\}+\tilde{\mathcal{N}}_2 = 0\\
\end{cases}$ in $\mathbf{Proposition}$ $\mathbf{\ref{homological equation alpha =0 prop}}$, we have

\begin{equation*}
\begin{split}
& \mathcal{H}^{m,\epsilon}(t)\circ \chi^m_1 \\
= & \mathcal{H}^{m}_0+\varphi'(t) \mathcal{L}_m + \epsilon\left(\{\mathcal{F}_m, \mathcal{H}^{m}_0\}+ \mathcal{H}^{m}_1+ \varphi'(t)(\{\mathcal{F}_m, \mathcal{L}_m\}+\frac{1}{2} \mathcal{N}_2)\right)\\
& \quad + \epsilon^2 \left[\int_0^1 \{\mathcal{F}_m, (1-\sigma)\{\mathcal{F}_m, \mathcal{H}^{m}_0+\varphi'(t) \mathcal{L}_m \} + \mathcal{H}^{m}_1+\frac{\varphi'(t)}{2} \mathcal{N}_2  \}\circ \chi^m_{\sigma}\mathrm{d}\sigma + \frac{\mathcal{N}_4 \circ \chi_1^m}{4} \right]\\
=& \mathcal{H}^{m}_0+\varphi'(t) \mathcal{L}_m + \epsilon\left( \mathcal{R}_m + \varphi'(t)(-\tilde{\mathcal{N}}_2+\frac{1}{2} \mathcal{N}_2)\right) + \epsilon^2\left[\int_0^1 \mathcal{G}_m(t,\sigma) \circ\chi^m_{\sigma}\mathrm{d}\sigma + \frac{\mathcal{N}_4 \circ \chi_1^m}{4} \right],\\
\end{split}
\end{equation*}where $\mathcal{G}_m(t,\sigma) = \{\mathcal{F}_m, (1-\sigma) \mathcal{R}_m + \sigma  \mathcal{H}^{m}_1+\varphi'(t)((\sigma-1)\tilde{\mathcal{N}}_2 +\frac{1}{2} \mathcal{N}_2)  \}$. We set
\begin{equation*}
\mathcal{Y}_m(t,v):=\int_0^1 X_{\mathcal{G}_m (t,\sigma)\circ \chi^m_{\sigma}}(v)\mathrm{d}\sigma+\frac{1}{4} X_{\mathcal{N}_4 \circ \chi^m_1}(v), 
\end{equation*}then we get $X_{\mathcal{H}^{m,\epsilon}(t)\circ \chi^m_1}=X_{\mathcal{H}^{m}_0}+\varphi'(t) X_{\mathcal{L}_m}+\epsilon \left( X_{\mathcal{R}_{m} }+\varphi'(t)(-X_{\tilde{\mathcal{N}}_2}+\frac{1}{2}X_{\mathcal{N}_2} ) \right)+\epsilon^{2} \mathcal{Y}_m(t)$.

\noindent Since $\mathcal{F}_m$, $\mathcal{H}^1_m$ and $\mathcal{R}_m$ are homogeneous series of order $3$ with uniformly bounded coefficients, $\mathcal{N}_2$ and $\tilde{\mathcal{N}}_2$ are homogeneous series of order $2$ with uniformly bounded coefficients, we have
\begin{equation*}
\begin{cases}
\|X_{\{\mathcal{F}_m, \mathcal{H}^1_m\}}(v)\|_{H^s}+\|X_{\{\mathcal{F}_m, \mathcal{R}_m\}}(v)\|_{H^s} \lesssim_s  \|v\|_{H^s}^3,\\
\|X_{\{\mathcal{F}_m, \mathcal{N}_2\}}(v)\|_{H^s}+\|X_{\{\mathcal{F}_m, \tilde{\mathcal{N}}_2\}}(v)\|_{H^s} \lesssim_s  \|v\|_{H^s}^2,\\
\end{cases}
\end{equation*}because for $\mathcal{J}_m(v)=\sum_{j-l+k=m}\mathrm{Re}(b_{j,l,k}v_j \overline{v}_l v_k)$ with $\sup_{j-l+k=m}|b_{j,l,k}| <+\infty$, we have
\begin{equation*}
\begin{split}
&\{\mathcal{F}_m, \mathcal{J}_m\}(v)\\
=& 4 \mathrm{Im}\sum_{n\geq 0} \partial_{\overline{v}_n} \mathcal{F}_m(v)\partial_{v_n} \mathcal{J}_m(v)\\
=& \sum_{k_1+k_2=l_1+l_2} \mathrm{Im}(4\overline{a}_{k_1,l_1,m+l_1-k_1}b_{l_2,k_2,m+k_2-l_2}+a_{l_1,l_1+l_2-m,l_2}\overline{b}_{k_1,k_1+k_2-m, k_2 })\overline{v}_{k_1}\overline{v}_{k_2}  v_{l_1}v_{l_2}\\
& +\sum_{k_1+k_2+l_1-l_2=2m} 2\mathrm{Im}( a_{k_1,k_1+l_1-m,l_1}b_{k_2,l_2,m+l_2-k_2} - a_{k_1,l_2,m+l_2-k_2} b_{k_2,l_1+k_2-m, l_1 })v_{k_1}v_{k_2}  v_{l_1}\overline{v}_{l_2}\\
\end{split}
\end{equation*}and $\{\mathcal{F}_m, \mathcal{N}_2\}(v)=-2 \mathrm{Im}(\sum_{j-l+k=m}a_{j,l,k} v_j \overline{v}_l v_k)$. Recall that $\sup_{0<\epsilon < \epsilon^*_m }\sup_{t\in \mathbb{R}}|\varphi'(t)|<+\infty$ and $X_{\mathcal{N}_4}(v)=-4i \Pi(|v|^2v)$, then we have 
\begin{equation*}
\sup_{0\leq \sigma \leq 1}\sup_{t\in \mathbb{R}} \|X_{\mathcal{G}_m(t,\sigma)}(v)\|_{H^s}+\|X_{\mathcal{N}_4}(v)\|_{H^s} \lesssim_{m,s } \|v\|^2_{H^s}(1+\|v\|_{H^s}) .
\end{equation*}By using $\mathbf{Lemma}$ $\mathbf{\ref{estimates of hamiltonian flow of epsilon 1 F orbital stability m}}$ and $(\ref{composition symplectic gradient})$, for all $v \in H^s_+$ such that $\epsilon \|v\|_{H^s} \leq \gamma_{m,s}$, we have
\begin{equation*}
\begin{split}
\sup_{0\leq \sigma \leq 1}\sup_{t\in \mathbb{R}}\|X_{\mathcal{G}_m(t,\sigma)\circ \chi^m_{\sigma}}(v)\|_{H^s} \leq & \sup_{0\leq \sigma \leq 1}\sup_{t\in \mathbb{R}}\|\mathrm{d}\chi^m_{-\sigma}(\chi^m_{\sigma}(v))\|_{B(H^s)}\|X_{\mathcal{G}_m(t,\sigma)}(\chi^m_{\sigma}(v))\|_{H^s} \\
\lesssim_{m,s} &  \sup_{0\leq \sigma \leq 1} e^{C_{m,s}\epsilon \|\chi^m_{\sigma}(v)\|_{H^s}}\|\chi^m_{\sigma}(v)\|^2_{H^s}(1+\|\chi^m_{\sigma}(v)\|_{H^s} )\\
\lesssim_{m,s} & \|v\|_{H^s}^2(1+\|v\|_{H^s} )
\end{split}
\end{equation*}and $\sup_{t\in \mathbb{R}}\|X_{\mathcal{N}_4\circ \chi^m_1}(v)\|_{H^s} \lesssim_{m,s}  \|v\|^2_{H^s}(1+\|v\|_{H^s})$. Then we obtain the estimate of $\mathcal{Y}_m$.\\

\noindent Since $w(t)=\chi_{-1}^m (v(t))$, we have the following infinite dimensional Hamiltonian system on the Fourier modes:
\begin{equation*}
i\partial_t w_n(t) =(1+ n^2-m^2-\epsilon \varphi'(t))w_n(t) + i\epsilon^{2}\widehat{\mathcal{Y}_m(t,w(t))}(n), \quad \forall n \geq 3m+1.
\end{equation*}because for all $n \geq 3m+1$, we have
\begin{equation*}
\begin{cases}\widehat{H_{e^{2imx}}(w(t))}(n)=\widehat{X_{\mathcal{R}_m}(w(t))}(n)=\widehat{X_{\mathcal{L}_m}(w(t))}(n)=0,\\
\widehat{X_{\mathcal{N}_2}(w(t))}(n)=\widehat{X_{\tilde{\mathcal{N}}_2}(w(t))}(n)=-2i w_n(t).
 \end{cases} 
\end{equation*}Consequently, if $\epsilon \|w(t)\|_{H^s}\leq \gamma_{m,s}$, then we have
\begin{equation*}
\begin{split}
\Big|\partial_t \|w(t)-\mathbb{P}_{3m}(w(t))\|_{H^s}^2 \Big|\leq & 2  \epsilon^{2} \sum_{n\geq 3m+1} (1+n^2)^s|\widehat{\mathcal{Y}_m(t,w(t))}(n)||w_n(t)| \\
 \leq & 2 \epsilon^{2} \|\mathcal{Y}_m(t,w(t))\|_{H^s}\|w(t)\|_{H^s}\\
 \lesssim_{m,s} &  \epsilon^{2} \|w(t)\|^3_{H^s}(1+\|w(t)\|_{H^s}).
\end{split}
\end{equation*}

\end{proof}

\subsubsection{End of the proof of $\mathbf{Theorem}$ $\mathbf{\ref{effective dynamic of equation NLSF with u0=exp(ix)+epsilon, t leq epsilon^(-2)}}$}
The proof is completed by a bootstrap argument and estimate $(\ref{estimates of w(t) epsilon alpha=0})$, obtained by the Birkhoff normal form transformation. It suffices to prove the case $u(0) \in C^{\infty}_+$ by the same density argument in the proof of $\mathbf{Proposition}$ $\mathbf{\ref{epsilon ^ (alpha/2 -1) time estimate for NLS Szego epsilon alpha}}$.
\begin{proof}For all $m \in  \mathbb{N} $ and $s\geq 1$, we recall that $u(t,x)=e^{i\arg u_m(t)}(e^{imx}+\epsilon  v(t,x))$, $\partial_t v(t)= X_{\mathcal{H}^{m,\epsilon}(t)}(v(t))$ and $w(t)= \chi^m_{-1}(v(t))$. In $\mathbf{Proposition}$ $\mathbf{\ref{proposition to define theta varphi alpha<<1}}$, we have shown that there exists $A_{m,s} \geq 1$ such that $\sup_{0 <\epsilon < 1}\|v(0)\|_{H^s} \leq A_{m,s}$. By using $(\ref{uniform estimate of v})$, we have 
\begin{equation*}
\sup_{0<\epsilon<1}\sup_{t\in \mathbb{R}}\|\mathbb{P}_{3m}(v(t))\|_{H^s} \leq (1+9m^2)^{\frac{s}{2}}I_m.
\end{equation*}Set $K_{m,s}:= \max(6A_{m,s}, 6(1+9m^2)^{\frac{s}{2}}I_m)$, $\epsilon_{m,s}:=\min(\epsilon^*_m,\frac{\gamma_{m,s}}{3K_{m,s}}, \frac{1}{48C_{m,s}K_{m,s}})$ and
\begin{equation*}
T_{m,s}:=\sup\{t\geq 0 :  \sup_{0\leq \tau  \leq t}\|v(\tau)\|_{H^s} \leq 2K_{m,s}\}.
\end{equation*}We choose $\epsilon\in (0,\epsilon_{m,s})$. Since $\epsilon = \epsilon \|v(0)\|_{H^s} \leq \epsilon A_{m,s} \leq \gamma_{m,s}$, $\mathbf{Lemma}$ $\mathbf{\ref{estimates of hamiltonian flow of epsilon 1 F orbital stability m}}$ yields that 
\begin{equation*}
\|v(0)-w(0)\|_{H^s}=\|v(0)-\chi^m_{-1}(v(0))\|_{H^s}\leq  \epsilon   A^2_{m,s}C_{m,s} \leq A_{m,s} . 
\end{equation*}So we have $\|w(0)\|_{H^s} \leq \frac{K_{m,s}}{3}$. For all $t\in [0,T_{m,s}]$, we have $\epsilon  \|v(t)\|_{H^s} \leq {\gamma_{m,s}}$. Hence $\mathbf{Lemma}$ $\mathbf{\ref{estimates of hamiltonian flow of epsilon 1 F orbital stability m}}$ gives the following estimate
\begin{equation*}
\begin{split}
\|w(t)\|_{H^s} \leq & \|v(t)\|_{H^s} + \|\chi^m_{-1}(v(t))-v(t)\|_{H^s}\\
\leq & \|v(t)\|_{H^s} + C_{m,s} \epsilon \|v(t)\|_{H^s}^2\\
\leq & 2K_{m,s} + 4C_{m,s} K_{m,s}^2 \epsilon  \\
\leq & 3K_{m,s}.
\end{split}
\end{equation*}So we have $\epsilon \sup_{0\leq t \leq T_{m,s}} \|w(t)\|_{H^s}  \leq \gamma_{m,s}$, which implies that
\begin{equation*}
\Big|\frac{\mathrm{d}}{\mathrm{d}t} \|w(t)-\mathbb{P}_{3m}(w(t))\|_{H^s}^2 \Big| \leq C'_{m,s} \epsilon^{2} \|w(t)\|^3_{H^s}(1+\|w(t)\|_{H^s}), 
\end{equation*}in $\mathbf{Lemma}$ $\mathbf{\ref{Birkhoff normal form transformation for travelling wave alpha=0}}$. Set $d_{m,s}:=\frac{1}{486 K_{m,s}^2 C'_{m,s}}$. We can obtain the following estimate:
\begin{equation*}
\begin{split}
&\|w(t)-\mathbb{P}_{3m}(w(t))\|_{H^s}^2\\
 \leq &\|w(0)\|_{H^s}^2 + C'_{m,s} |t|   \epsilon^{2} \sup_{0\leq \tau\leq T_{m,s}}\|w(\tau)\|^3_{H^s}(1+\sup_{0\leq \tau\leq T_{m,s}}\|w(\tau)\|_{H^s})\\
\leq & \frac{K_{m,s}^2}{9} + 162 C'_{m,s} K_{m,s}^4 |t| \epsilon^{2}\\
\leq & \frac{4K_{m,s}^2}{9},
\end{split}
\end{equation*}for all $0 \leq t \leq \min(T_{m,s},  \frac{d_{m,s}}{\epsilon^{2 }})$. We use $\mathbf{Lemma}$ $\mathbf{\ref{estimates of hamiltonian flow of epsilon 1 F orbital stability m}}$ again to obtain that
\begin{equation*}
\begin{split}
\|v(t)\|_{H^s} \leq & 2\|w(t)-v(t)\|_{H^s}+\|w(t)-\mathbb{P}_{3m}(w(t))\|_{H^s} + \|\mathbb{P}_{3m}(v(t))\|_{H^s} \\
 \leq & 2 C_{m,s} \epsilon  \|v(t)\|_{H^s}^2+ \frac{2K_{m,s}}{3}+\frac{K_{m,s}}{6}\\
 \leq & 8 C_{m,s} K_{m,s}^2 \epsilon+\frac{5K_{m,s}}{6}\\
 \leq & K_{m,s},
\end{split}
\end{equation*}for all $t \in [0, \frac{d_{m,s}}{\epsilon^{2 }}]$. In the case $t<0$, we use the same procedure and we replace $t$ by $-t$. Consequently, we have
\begin{equation*}
\sup_{| t |\leq \frac{d_{m,s}}{\epsilon^{2 }}}\|u(t)-e^{i(\cdot+\arg u_m(t))}\|_{H^s}= \epsilon  \sup_{| t| \leq \frac{d_{m,s}}{\epsilon^{2 }}}\|v(t)\|_{H^s} \leq K_{m,s}   \epsilon .
\end{equation*}
\end{proof}

\section{Comparison to NLS equation}\label{comparison NLS NLSSZEGO}
Although we have some similar results for the NLS equation,
there are still some differences between the NLS equation and the NLS-Szeg\H{o} equation. We denote by $u=u(t,x) =\sum_{n\geq 0} u_n(t) e^{inx}$ the solution of the NLS equation 
\begin{equation}\label{NLS comparation}
i\partial_t u + \partial_x^2 u = |u|^2 u.
\end{equation}In Fourier modes, we have $i\partial_t u_n = n^2 u_n + \sum_{k_1-k_2+k_3=n} u_{k_1} \overline{u}_{k_2} u_{k_3}$. Fix $m \in  \mathbb{Z}$, for every $n \in \mathbb{Z}$, we define $v_n:=u_{n+m}e^{i(m^2+2nm)t}$. Then $\|v(t)\|_{L^2}=\|u(t)\|_{L^2}$ and we have 
\begin{equation}\label{v equation for comparation NLS NLSSzego}
i\partial_t v_n = n^2 v_n + \sum_{k_1-k_2+k_3=n} v_{k_1} \overline{v}_{k_2} v_{k_3}.
\end{equation}If $u$ is localized in the $m$-th Fourier mode, then $v$ is localized on the zero mode. Thus the orbital stability of the traveling wave $\mathbf{e}_m$ can be reduced to the case $m=0$. In Faou--Gauckler--Lubich $[\mathbf{\ref{faou--Gauckler--lubich Sobolev Stability of Plane Wave}}]$, $H^s$-orbital stability of plane wave solutions is established by limiting the mass of the initial data $\|u_0\|_{H^s}$ to a certain full measure subset of $(0,+\infty)$ for the defocusing cubic Schr\"odinger equation with the time interval $[-\epsilon^{-N},\epsilon^{-N}]$, for all $N \geq 1$ and $s \gg 1$. However, this above transformation $u\mapsto v$ does not preserve the $L^2$ norm for the NLS-Szeg\H{o} equation and formula $(\ref{v equation for comparation NLS NLSSzego})$ fails too. \\

\noindent On the other hand, the approach that we use to prove $\mathbf{Theorem}$ $\mathbf{\ref{effective dynamic of equation NLSF with u0=exp(ix)+epsilon, t leq epsilon^(-2)}}$ does not work for $(\ref{NLS comparation})$. In fact, the negative high frequency Fourier modes in the term $\mathcal{H}^m_1(v)=\mathrm{Re}\int_{\mathbb{S}^1}e^{-imx} |v|^2 v$ can not be cancelled by the homological equation. The energy functional of the equation of $v$ can not be reduced as $ \mathcal{H}_0^{m} + \mathcal{O}(\epsilon^2)$ by using the same method in this paper. The Szeg\H{o} filtering to $(\ref{NLS comparation})$ makes it possible to cancel all the high frequency resonances in the term $\mathcal{R}_m=\{\mathcal{F}_m, \mathcal{H}_0^m\} + \mathcal{H}^m_1$. Then we use a bootstrap argument to deal with the equation $\partial_t w(t) = X_{ \mathcal{H}^{m,\epsilon}(t) \circ \chi_1^m}(w(t))$ after the Birkhoff normal form transformation. 

\section{Appendix}\label{appendix}
We give the details of the homological equation in $\mathbf{Proposition}$ $\mathbf{\ref{homological equation alpha =0 prop}}$ and prove $(\ref{Solution of linear system of all a _kln})$ and $\mathbf{Proposition}$ $\mathbf{\ref{homological equation for all alpha}}$ in the case $\alpha=0$.\\

\noindent For all $v\in \bigcup_{n\in \mathbb{N}} \mathbb{P}_n (C^{\infty}_+)$, $\mathcal{H}_0^{m}(v) = \frac{1}{2}\|\partial_x v\|_{L^2}^2 + \frac{1-m^2 }{2}\|v\|_{L^2}^2 + \frac{1}{2}\int_{\mathbb{S}^1}\mathrm{Re}(e^{2imx} \overline{v}^2) \mathrm{d}x$ and ${\mathcal{F}_m}(v)=\sum_{\begin{smallmatrix}
 j-l+k=m\\ j,k,l \in \mathbb{N}
\end{smallmatrix}}\mathrm{Re}(a_{j,l,k}v_j \overline{v}_l v_k)$. With the convention $v_n = 0$, for all $n<0$, we have
\begin{equation*}
\begin{cases}
\partial_{v_n}\mathcal{H}^{m}_0(v)=\overline{\partial_{\bar{v}_n}\mathcal{H}^{m}_0(v)}=\tfrac{1+n^2-m^2 }{2}\overline{v}_n + \tfrac{1}{2}v_{2m-n},\\
\partial_{\bar{v}_n}\mathcal{F}_m(v)=\overline{\partial_{v_n}\mathcal{F}_m(v)}=\sum_{k-l+n=m}\bar{a}_{k,l,n}\overline{v}_k v_l  + \tfrac{1}{2}\sum_{k-n+l=m}{a}_{k,n,l} v_k v_l. \\
\end{cases}
\end{equation*}Combing $(\ref{Poisson bracket})$, we have the Poisson bracket of $\mathcal{F}_m$ and $\mathcal{H}^{m}_0 $,
\begin{equation*}
\begin{split}
&\{\mathcal{F}_m,\mathcal{H}^{m}_0\}(v)\\ =&  -2i \sum_{n\geq 0} \left( \partial_{\overline{v}_n} \mathcal{F}_m  \partial_{v_n} \mathcal{H}^{m }_0 - \partial_{\overline{v}_n }\mathcal{H}^{m }_0 \partial_{v_n}\mathcal{F}_m\right)(v)\\
=&\mathrm{Im}(\sum_{ \begin{smallmatrix} 
k-l+n=m  \\
k,l,n \in \mathbb{N}
\end{smallmatrix}}2(1+n^2-m^2 )  \overline{a}_{k,l,n}\overline{v}_k v_l \overline{v}_n+\sum_{ \begin{smallmatrix} 
k-n+l=m  \\
k,l,n \in \mathbb{N}
\end{smallmatrix}}(1+ n^2-m^2 ) a_{k,n,l}v_k \overline{v}_n v_l ) \\
&+\mathrm{Im}(\sum_{ \begin{smallmatrix} 
k-l+n=m  \\
k,l,n \in \mathbb{N}, n\leq 2m
\end{smallmatrix}}2  \overline{a}_{k,l,n}v_l \overline{v}_k v_{2m-n} + \sum_{ \begin{smallmatrix} 
k-n+l=m  \\
k,l,n \in \mathbb{N}, n \leq 2m
\end{smallmatrix}}a_{k,n,l}v_k v_l v_{2m-n})\\
=& \mathrm{Im}(\mathcal{A}_1+\mathcal{A}_2+\mathcal{A}_3+\mathcal{A}_4).
\end{split}
\end{equation*}The term $\mathcal{A}_4=  \sum_{ \begin{smallmatrix} 
k-n+l=m  \\
k,l,n \in \mathbb{N}, n \leq 2m
\end{smallmatrix}}a_{k,n,l}v_k v_l v_{2m-n}$ has only finite term depending on low frequency Fourier modes $v_1, \cdots, v_{3m}$. We divide $\mathcal{A}_1,\mathcal{A}_2,\mathcal{A}_3$ into two parts. The first part consists of only low frequency resonances, the second part consists of high frequency resonances.

\begin{equation*}
\mathcal{A}_1=  -\sum_{ \begin{smallmatrix} 
k-l+n=m  \\
k,l,n \in \mathbb{N}
\end{smallmatrix}}2(1+ n^2-m^2 )a_{k,l,n}v_k \overline{v}_l v_n = \mathcal{A}_1^{low} + \mathcal{A}_1^{\geq 2m+1},
\end{equation*}where $\mathcal{A}_1^{low}$ consists of all the resonances $v_j \overline{v}_l v_k$ such that $j,k \leq 2m$,
\begin{equation*}
\begin{split}
\mathcal{A}_1^{low} 
 = & -(\sum_{j=0}^{\lfloor \frac{m-1}{2}\rfloor} \sum_{k=m-j}^{2m} + \sum_{j=\lfloor \frac{m+1}{2}\rfloor}^{2m-1} \sum_{k=j+1}^{2m})2(2+ j^2+k^2-2m^2 )a_{j,j+k-m,k}v_j \overline{v}_{j+k-m} v_k\\
& -\sum_{j = 0}^{m-1}\sum_{k=j+m+1}^{2m}2(2+ j^2+(k+m-j)^2-2m^2  )a_{j,k,k+m-j}v_j \overline{v}_{k} v_{k+m-j}\\
& -\sum_{k = \lfloor \frac{m+1}{2} \rfloor}^{2m}2(1+k^2-m^2 )a_{k,2k-m,k}v_k^2 \overline{v}_{2k-m},\\
 \end{split}
\end{equation*}and $\mathcal{A}_1^{\geq 2m+1}$ contain other resonances $v_j \overline{v}_l v_k$ such that at least one of $j,k$ is strictly larger than $2m$.
\begin{equation*}
\begin{split}
\mathcal{A}_1^{\geq 2m+1}= & - \sum_{j = 0}^{m} \sum_{k\geq 2m+1} 2(2+  j^2+(k+m-j)^2-2m^2  )a_{j,k,k+m-j}v_j \overline{v}_{k} v_{k+m-j}\\
& -  \sum_{j = m+1}^{2m}\sum_{k\geq 2m+1}2(2+j^2+k^2-2m^2)a_{j,j+k-m,k}v_j \overline{v}_{j+k-m} v_k\\
& - \sum_{k\geq 2m+1}\sum_{\begin{smallmatrix}
 j \geq k+1 
\end{smallmatrix}} 2(2+ k^2+j^2-2m^2)a_{k,k+j-m,j}v_k \overline{v}_{k+j-m} v_j\\
& - \sum_{k\geq 2m+1}2(1+k^2-m^2)a_{k,2k-m,k}v_k^2 \overline{v}_{2k-m}.
\end{split}
\end{equation*}Then, we calculate $\mathcal{A}_2=   \sum_{ \begin{smallmatrix} 
k-n+l=m  \\
k,l,n \in \mathbb{N}
\end{smallmatrix}}(1+ n^2-m^2 )a_{k,n,l}v_k \overline{v}_n v_l =\mathcal{A}_2^{low}+\mathcal{A}_2^{\geq 2m+1}$. $\mathcal{A}_2^{low}$ consists of all the resonances $v_k \overline{v}_l v_n$ such that $k,n \leq 2m$ or $l \leq 2m$.
\begin{equation*}
\begin{split}
\mathcal{A}_2^{low} =&\sum_{ \begin{smallmatrix} 
j-l+k=m  \\
j,l,k \in \mathbb{N}, l\leq 2m
\end{smallmatrix}}(1+ l^2-m^2  )a_{j,l,k}v_j \overline{v}_l v_k\\
&  \quad + \sum_{ \begin{smallmatrix} 
j-l+k=m  \\
m+1 \leq j \leq 2m-1\\
j+1 \leq k\leq 2m, l \geq 2m+1\\
\end{smallmatrix}}2(1+ l^2-m^2 )a_{j,l,k}v_j \overline{v}_l v_k\\
& \quad + \sum_{k=\lfloor \frac{3m}{2} \rfloor+1}^{2m} (1+(2k-m)^2-m^2 )a_{k,2k-m,k} v_k^2 \overline{v}_{2k-m};\\
\end{split}
\end{equation*}$\mathcal{A}_2^{\geq 2m+1}$ plays the same role as $\mathcal{A}_1^{\geq 2m+1}$.

\begin{equation*}
\begin{split}
\mathcal{A}_2^{\geq 2m+1} = &   \sum_{j=0}^m \sum_{k\geq 2m+1} 2(1+ k^2-m^2  )a_{j,k,m+k-j}v_j \overline{v}_k v_{m+k-j}\\
& \quad+\sum_{j=m+1}^{2m} \sum_{k\geq 2m+1}2(1+ (j+k-m)^2-m^2  )a_{j,j+k-m,k}v_j \overline{v}_{j+k-m} v_k\\
& \quad+ \sum_{k\geq 2m+1} \sum_{j\geq k+1}2(1+ (k+j-m)^2-m^2  )a_{k,k+j-m,n}v_k \overline{v}_{k+j-m} v_j\\
& \quad+ \sum_{k\geq 2m+1}(1+ (2k-m)^2-m^2  )a_{k,2k-m,k} v_k^2 \overline{v}_{2k-m}.
\end{split}
\end{equation*}At last, $ \mathcal{A}_3 =  \sum_{ \begin{smallmatrix} 
k-l+n=m  \\
k,l,n \in \mathbb{N}, n \leq 2m
\end{smallmatrix}}2\overline{ a}_{k,l,n}v_l \overline{v}_k v_{2m-n}
= \sum_{ \begin{smallmatrix} 
j-l+k=m  \\
j, k,l,n \in \mathbb{N}, j \leq 2m
\end{smallmatrix}} 2 \overline{a}_{l,k,2m-j} v_k \overline{v}_l v_j$. Using the same idea, we have $ \mathcal{A}_3= \mathcal{A}_3^{low}+ \mathcal{A}_3^{\geq 2m+1}$, with
\begin{equation*}
\begin{split}
\mathcal{A}_3^{low} = \sum_{j= 0}^m \sum_{k=0}^{2m} 2 \overline{a}_{2m-j,m+k-j,k}v_j \overline{v}_k v_{m+k-j} + \sum_{j= m+1}^{2m} \sum_{k=0}^{2m} 2 \overline{a}_{2m-j,k,k+j-m}v_j \overline{v}_{k+j-m} v_{k};
\end{split}
\end{equation*}and

\begin{equation*}
\begin{split}
\mathcal{A}_3^{\geq 2m+1}=  \sum_{j= 0}^m \sum_{k \geq  2m+1} 2 \overline{a}_{2m-j,m+k-j,k}v_j \overline{v}_k v_{m+k-j} + \sum_{j= m+1}^{2m} \sum_{k \geq  2m+1} 2 \overline{a}_{2m-j,k,k+j-m}v_j \overline{v}_{k+j-m} v_{k}. \\
\end{split}
\end{equation*}Recall that $\mathcal{H}_{1}^m(v)=\mathrm{Re}(\int_{\mathbb{S}^1}e^{-imx}|v|^2v) = \sum_{k-l+n=m}\mathrm{Re}( v_k \overline{v}_l v_n)$. A similar calculus as in the case of $\mathcal{A}_1$ shows that $\mathcal{H}_{1 }^m( v) = \mathrm{Re}(B^{low} + B^{\geq 2m+1})$ with
\begin{equation*}
\begin{split}
& B^{low}\\
=&(\sum_{j=0}^{\lfloor \frac{m-1}{2}\rfloor} \sum_{k=m-j}^{2m} + \sum_{j=\lfloor \frac{m+1}{2}\rfloor}^{2m-1} \sum_{k=j+1}^{2m})2v_j \overline{v}_{j+k-m} v_k  +\sum_{k = \lfloor \frac{m+1}{2} \rfloor}^{2m} v_k^2 \overline{v}_{2k-m}+\sum_{j = 0}^{m-1}\sum_{k=j+m+1}^{2m}2 v_j \overline{v}_{k} v_{k+m-j},
 \end{split}
\end{equation*}
\begin{equation*}
\begin{split}
B^{\geq 2m+1}=  \sum_{k\geq 2m+1}\left(\sum_{j = 0}^{m}2 v_j \overline{v}_{k} v_{k+m-j}+\sum_{j = m+1}^{2m} 2v_j \overline{v}_{j+k-m} v_k + v_k^2 \overline{v}_{2k-m}+\sum_{\begin{smallmatrix}
 j \geq k+1 
\end{smallmatrix}} 2 v_k \overline{v}_{k+j-m} v_j \right).
\end{split}
\end{equation*}At last we define $\mathrm{Reson}^{low}(v)\Big|_{\alpha=0}=-i (\mathcal{A}_1^{low}+\mathcal{A}_2^{low}+\mathcal{A}_3^{low}+\mathcal{A}_4)+B^{low}$ and 
\begin{equation*}
\mathrm{Reson}^{\geq 2m+1}(v)\Big|_{\alpha=0}=-i (\mathcal{A}_1^{\geq 2m+1}+\mathcal{A}_2^{\geq 2m+1}+\mathcal{A}_3^{\geq 2m+1})+B^{\geq 2m+1}.
\end{equation*}Then we have $\{\mathcal{F}_m, \mathcal{H}^m_0\}(v)+\mathcal{H}_{1 }^m( v) = \mathrm{Re}(\mathrm{Reson}^{low}(v)\Big|_{\alpha=0}+ \mathrm{Reson}^{\geq 2m+1}(v)\Big|_{\alpha=0})$. Since $\mathrm{Reson}^{low}(v)\Big|_{\alpha=0}$ contains only finite terms and depends only on $v_1, \cdots, v_{3m}$, so is $\mathcal{R}_m=\mathrm{Re}(\mathrm{Reson}^{low}(v)\Big|_{\alpha=0})$. For high frequency resonances, we compute 

\begin{equation*}
\begin{split}
&  \mathrm{Reson}_{\geq 2m+1}(v)\Big|_{\alpha=0}\\
=&\sum_{k\geq 2m+1}\sum_{j=0}^{m-1} 2\left(-i \overline{a}_{2m-j,m+k-j,k}+(1+2(m-j)(k-j) )i a_{j,k,k+m-j}+1 \right)v_j \overline{v}_k v_{k+m-j}\\
+&\sum_{k\geq 2m+1}\sum_{j=m+1}^{2m} 2((1-2(k-m)(j-m) ) i a_{j,j+k-m,k}-i \overline{a}_{2m-j,k,k+j-m}+1)v_{j} \overline{v}_{j+k-m} v_k\\
+&\sum_{k\geq 2m+1}\left[2( i a_{m,k,k}-i \overline{a}_{m,k,k}+1)v_{m} |v_k|^2+((1-2(k-m)^2 )i a_{k,2k-m,k}+1)v_k^2\overline{v}_{2k-m}\right]\\
+&\sum_{k\geq 2m+1}\sum_{j \geq k+1}2 ((1-2(j-m)(k-m) )i a_{k,j+k-m,j}+1) v_{k} \overline{v}_{j+k-m} v_j.
\end{split}
\end{equation*}After replacing $j$ by $2m-j$ in the sum $m+1 \leq j \leq 2m$, we have the equivalence between $\mathrm{Reson}_{\geq 2m+1}\Big|_{\alpha=0}= 0$ and $(\ref{Solution of linear system of all a _kln})$ in the case $\alpha=0$.




\end{document}